\def\MR#1#2{\href{http://www.ams.org/mathscinet-getitem?mr=#1}{#2}}%
\theoremstyle{change}%
\newtheorem{definition}{Definition:}[section]%
\newtheorem{proposition}[definition]{Proposition:}%
\newtheorem{theorem}[definition]{Theorem:}%
\newtheorem{lemma}[definition]{Lemma:}%
\newtheorem{vlemma}[definition]{Volume lemma:}
\newtheorem{corollary}[definition]{Corollary:}%
{\theorembodyfont{\rmfamily}\newtheorem{remark}[definition]{Remark:}}%
{\theorembodyfont{\rmfamily}}%
\newenvironment{proof}
  {{\bf Proof:}}
  {\qquad \hspace*{\fill} $\Box$}%
\newcommand{\RT}{\operatorname{Re}}%
\newcommand{\ad}{\operatorname{ad}}%
\newcommand{\id}{\operatorname{id}}
\newcommand{\inner}{\operatorname{int}}%
\newcommand{\cl}{\operatorname{cl}}%
\newcommand{\tm}{\times}%
\newcommand{\ep}{\varepsilon}%
\newcommand{\inv}{\operatorname{inv}}%
\newcommand{\spn}{\operatorname{span}}%
\newcommand{\vol}{\operatorname{vol}}%
\newcommand{\Lip}{\operatorname{Lip}}%
\newcommand{\Graph}{\operatorname{Graph}}%
\newcommand{\diam}{\operatorname{diam}}%
\newcommand{\Mo}{\operatorname{Mo}}%
\newcommand{\Ly}{\operatorname{Ly}}%
\newcommand{\rmd}{\mathrm{d}}%
\newcommand{\rme}{\mathrm{e}}%
\newcommand{\AC}{\mathcal{A}}%
\newcommand{\CC}{\mathcal{C}}%
\newcommand{\EC}{\mathcal{E}}%
\newcommand{\FC}{\mathcal{F}}%
\newcommand{\LC}{\mathcal{L}}%
\newcommand{\OC}{\mathcal{O}}%
\newcommand{\QC}{\mathcal{Q}}%
\newcommand{\SC}{\mathcal{S}}%
\newcommand{\UC}{\mathcal{U}}%
\newcommand{\VC}{\mathcal{V}}%
\newcommand{\N}{\mathbb{N}}%
\newcommand{\R}{\mathbb{R}}%
\newcommand{\Z}{\mathbb{Z}}%
\begin{document}

\title{Invariance Entropy of Hyperbolic Control Sets\footnote{2010 \emph{Mathematics Subject Classification}. Primary: 93C15, 37D20, 37C60}}%
\author{Adriano Da Silva\footnote{Imecc - Unicamp, Departamento de Matem\'atica, Rua S\'ergio Buarque de Holanda, 651, Cidade Universit\'aria Zeferino Vaz 13083-859, Campinas - SP, Brasil; ajsilvamat@hotmail.com}\ \ and Christoph Kawan\footnote{Courant Institute of Mathematical Sciences, New York University, 251 Mercer Street, New York, N.Y. 10012-1185, USA; kawan@cims.nyu.edu}}%
\maketitle%

\begin{abstract}%
In this paper, we improve the known estimates for the invariance entropy of a nonlinear control system. For sets of complete approximate controllability we derive an upper bound in terms of Lyapunov exponents and for uniformly hyperbolic sets we obtain a similar lower bound. Both estimates can be applied to hyperbolic chain control sets, and we prove that under mild assumptions they can be merged into a formula.%
\end{abstract}

{\small {\bf Keywords:} Invariance entropy; Control sets; Chain control sets; Hyperbolicity; Control-affine systems; Volume lemma; Universally regular controls; Shadowing lemma}

\section{Introduction}%

Invariance entropy is a measure for the smallest rate of information above which a control system is able to render a given compact controlled invariant subset of the state space invariant. This concept, first introduced in Colonius and Kawan \cite{CKa}, is essentially equivalent to the topological feedback entropy, defined by Nair, Evans, Mareels and Moran \cite{Nea}. A comprehensive treatment of the subject can be found in the monograph \cite{Ka2}. If the given set is a control set, i.e., a maximal set of complete approximate controllability, upper bounds of the invariance entropy in terms of the sums of positive Lyapunov exponents of periodic solutions can be given. Under a completely different assumption, namely the existence of a uniformly hyperbolic structure (in a skew-product sense) on the controlled invariant set, there exists a promising approach for an optimal lower estimate in terms of the unstable determinant. Specific examples of compact controlled invariant sets are the bounded chain control sets of a control-affine system. These sets are the projections of the maximal chain transitive sets of the associated control flow, which is a skew-product on the extended state space, including the shift dynamics on the set of admissible control functions. Moreover, a general result in Colonius and Du \cite{CDu} shows that under the assumption of local accessibility a chain control with nonempty interior and a uniformly hyperbolic structure is the closure of a control set. Hence, in this case good entropy estimates from above and from below are available. In this paper, we improve these estimates and merge them into a formula. The paper consists of three main sections whose contents are briefly described as follows.%

In Section \ref{sec_gub}, we improve the known upper bounds for the entropy of a control set. In particular, we show that the hyperbolicity assumption imposed in \cite[Sec.~5.2]{Ka2} can be dropped without substitution. Then we obtain an upper bound in terms of the Lyapunov exponents of the induced linear system on the exterior bundle of the state space. This estimate has some similarity with the integral formula for the topological entropy of $\CC^{\infty}$-maps, established in Kozlovski \cite{Koz} (based on previous work of Yomdin \cite{Yom} and others). However, instead of an integral, i.e., an average of the exponential growth rates, the infimum over the growth rates has to be considered. An essential ingredient in the proof of this estimate is a result of Coron \cite{Cor} which implies that under a strong accessibility assumption the set of universally regular control functions is generic inside the set of smooth control functions.%

In Section \ref{sec_glb}, we obtain a lower estimate for the entropy of a uniformly hyperbolic controlled invariant set. The main ingredient of the proof is a skew-product version of the Bowen-Ruelle volume lemma. This lemma was first formulated by Liu \cite{Liu} in the context of random dynamical systems. Liu, however, only gives an outline of the proof with many details missing. Following this outline, we develop a fully detailed proof in the context of general continuous skew-products with compact base space (without use of the control structure present in the context of control flows). Combining the volume lemma with ideas from Young \cite{You} for the estimation of escape rates, we derive a lower estimate of the invariance entropy that is similar to the upper estimate of Section \ref{sec_gub}.%

Finally, in Section \ref{sec_form} we provide a formula for the entropy of a hyperbolic chain control set, showing that the upper and lower bounds of the preceding sections coincide. More precisely, we show that%
\begin{equation*}
  h_{\inv}(Q) = \inf_{(u,x)}\limsup_{\tau\rightarrow\infty}\frac{1}{\tau}\log\left|\det(\rmd\varphi_{\tau,u})|_{E^+_{u,x}}\right|,%
\end{equation*}
where $Q$ is the hyperbolic chain control set and the infimum is taken over all pairs $(u,x)$ of control functions and states such that the corresponding trajectory $\varphi(t,x,u)$ remains in $Q$ for all times $t\in\R$. The linear subspace $E^+_{u,x}$ is the corresponding fiber of the unstable subbundle. The extra work needed to obtain this formula mainly consists in proving an approximation result for periodic points and a periodic shadowing property for the shift flow on the set of admissible control functions. This enables us to derive the above formula for smooth control-affine systems under the mild assumption that the Lie algebra rank condition is satisfied on the interior of the chain control set.%

\section{Preliminaries}\label{sec_prelim}

\subsection{Notation}%

We write $\Z$, $\N$, $\R$ and $\R_+$ for the sets of integers, positive integers, real numbers and nonnegative real numbers, respectively, and $\R^d$ for the $d$-dimensional Euclidean space. Moreover, $\N_0 = \N \cup \{0\}$. If $V$ is a finite-dimensional real vector space, $V^*$ denotes its dual space, the space of real-valued linear functionals on $V$. By a smooth manifold we understand a finite-dimensional connected second-countable Hausdorff manifold endowed with a $\CC^{\infty}$-differentiable structure. If $M$ is a smooth manifold, we denote by $T_xM$ the tangent space at $x\in M$, by $0_x$ (or simply $0$) the zero element of $T_xM$, and by $TM$ the tangent bundle. If $\varphi:M\rightarrow N$ is a differentiable map between smooth manifolds, we write $(\rmd\varphi)_x:T_xM \rightarrow T_{\varphi(x)}N$ for its derivative at $x\in M$. If $(M,g)$ is a Riemannian manifold, we write $\langle\cdot,\cdot\rangle$ for the inner product and $|\cdot|$ for the induced norm on each tangent space, while $\|\cdot\|$ is used for operator norms. We write $\varrho$ for the induced distance function on $M$ and $\vol$ for the Riemannian volume measure. By $\exp_x$ we denote the Riemannian exponential map at $x\in M$. Furthermore, we write $\cl A$ for the topological closure of a set, and $\inner A$ for its interior. The open ball of radius $\ep>0$ at $x\in M$ is denoted by $B(x,\ep)$. The abbreviation ``a.e.'' stands for ``(Lebesgue-) almost everywhere''. If $x$ is a real number, $[x]$ denotes the greatest integer $\leq x$. We write $\log^+x = \max\{0,\log x\}$, and we let $\#S$ denote the number of elements of a finite set $S$. $\chi_A$ stands for the characteristic function of a set $A$.%

\subsection{Control-Affine Systems}%

A control-affine system is given by a family 
\begin{equation}\label{eq_cas}
  \dot{x}(t) = f_0(x(t)) + \sum_{i=1}^mu_i(t)f_i(x(t)),\quad u\in\UC,%
\end{equation}
of ordinary differential equations on a smooth manifold $M$, the \emph{state space} of the system. Here $f_0,f_1,\ldots,f_m$ are $\CC^k$-vector fields for some $k\geq1$. The set $\UC$ of admissible control functions is given by%
\begin{equation*}
  \UC = \left\{u:\R\rightarrow\R^m\ :\ u \mbox{ is measurable with } u(t) \in U \mbox{ a.e.}\right\},%
\end{equation*}
where $U\subset\R^m$ is a compact and convex set. (Frequently, we will also assume $0 \in \inner U \neq \emptyset$.) Then $\UC$, endowed with the weak$^*$-topology of $L^{\infty}(\R,\R^m) = L^1(\R,\R^m)^*$, is a compact metrizable space and the shift flow%
\begin{equation*}
  \theta:\R \tm \UC \rightarrow \UC,\quad (t,u) \mapsto \theta_tu = u(\cdot + t),%
\end{equation*}
is a continuous dynamical system, which is chain transitive. We write $\varphi(\cdot,x,u)$ for the unique solution of \eqref{eq_cas} with $\varphi(0,x,u) = x$. For simplicity, we assume that all solutions are defined on the whole time axis. Then we obtain a map%
\begin{equation*}
  \varphi:\R \tm M \tm \UC \rightarrow M,\quad (t,x,u) \mapsto \varphi(t,x,u),%
\end{equation*}
called the \emph{transition map} of the system, and this map is continuous as well. Together with the shift flow it constitutes a skew-product flow%
\begin{equation*}
  \phi:\R \tm \UC \tm M \rightarrow \UC \tm M,\quad (t,u,x) \mapsto \phi_t(u,x) = (\theta_tu,\varphi(t,x,u)),%
\end{equation*}
called the \emph{control flow} of the system. We also use the notation $\varphi_{t,u}:M\rightarrow M$ for the map $x\mapsto\varphi(t,x,u)$. If the vector fields $f_0,f_1,\ldots,f_m$ are of class $\CC^k$, then $\varphi$ is of class $\CC^k$ with respect to the state variable and the corresponding partial derivatives of order $1$ up to $k$ depend continuously on $(t,x,u) \in \R\tm M\tm\UC$ (cf.~\cite[Thm.~1.1]{Ka2}).%

We define the \emph{set of points reachable from $x\in M$ at time $\tau\geq0$}, the \emph{set of points reachable from $x$ up to time $\tau$}, and the \emph{positive orbit of $x$}, respectively, by%
\begin{eqnarray*}
  \OC^+_{\tau}(x) &:=& \left\{\varphi(\tau,x,u)\ :\ u\in\UC\right\},\\%
  \OC^+_{\leq\tau}(x) &:=& \bigcup_{t\in[0,\tau]}\OC^+_t(x) \mbox{\quad and\quad } \OC^+(x) := \bigcup_{\tau\geq0}\OC^+_{\tau}(x).%
\end{eqnarray*}

In the following, we fix a metric $d$ on $M$ (not necessarily a Riemannian distance). A set $D\subset M$ is called \emph{controlled invariant (in forward time)} if for each $x\in D$ there exists $u\in\UC$ with $\varphi(\R_+,x,u)\subset D$. It is called a \emph{control set} if it satisfies the following properties:%
\begin{enumerate}
\item[(A)] $D$ is controlled invariant.%
\item[(B)] Approximate controllability holds on $D$, i.e., $D\subset\cl\OC^+(x)$ for all $x\in D$.%
\item[(C)] $D$ is maximal (w.r.t.~set inclusion) with the properties (A) and (B).%
\end{enumerate}
From the maximality it easily follows that a control set with nonempty interior has the \emph{no-return property}, i.e., $x \in D$ and $\varphi(\tau,x,u) \in D$ for some $\tau>0$ and $u\in\UC$ implies $\varphi([0,\tau],x,u)\subset D$.%

A set $E\subset M$ is called \emph{full-time controlled invariant} if for each $x\in E$ there exists $u\in\UC$ with $\varphi(\R,x,u)\subset E$. The \emph{full-time lift} $\EC$ of $E$ is defined by%
\begin{equation*}
  \EC = \mathrm{Lift}(E) := \left\{ (u,x) \in \UC \tm M\ :\ \varphi(\R,x,u) \subset E \right\},%
\end{equation*}
which is easily seen to be compact and $\phi$-invariant. For points $x,y\in M$ and numbers $\ep,\tau>0$, a \emph{controlled $(\ep,\tau)$-chain from $x$ to $y$} is given by $n\in\N$, points $x_0,\ldots,x_n\in M$, control functions $u_0,\ldots,u_{n-1}\in\UC$, and times $t_0,\ldots,t_{n-1}\geq\tau$ such that $x_0 = x$, $x_n = y$, and $d(\varphi(t_i,x_i,u_i),x_{i+1}) < \ep$ for $i=0,\ldots,n-1$. A set $E\subset M$ is called a \emph{chain control set} if it satisfies the following properties:%
\begin{enumerate}
\item[(A)] $E$ is full-time controlled invariant%
\item[(B)] For all $x,y\in E$ and $\ep,\tau>0$ there exists an $(\ep,\tau)$-chain from $x$ to $y$ in $M$.%
\item[(C)] $E$ is maximal (w.r.t.~set inclusion) with the properties (A) and (B).%
\end{enumerate}
Every chain control set is closed, which is not the case for control sets. Moreover, every control set with nonempty interior is contained in a chain control set if local accessibility holds, and the full-time lift of a chain control set is a maximal invariant chain transitive subset for $\phi$. Conversely, the projection of a maximal chain transitive set to $M$ is a chain control set.%

System \eqref{eq_cas} is called \emph{locally accessible at $x$} provided that for all $\tau>0$ the sets $\OC^+_{\leq\tau}(x)$ and $\OC^-_{\leq\tau}(x)$ have nonempty interiors. It is called \emph{locally accessible} if it is locally accessible at every point $x\in M$. If the vector fields $f_0,f_1,\ldots,f_m$ are of class $\CC^{\infty}$, the \emph{Lie algebra rank condition} (Krener's criterion) guarantees local accessibility: Let $\LC=\LC(f_0,f_1,\ldots,f_m)$ denote the smallest Lie algebra of vector fields on $M$ containing $f_0,f_1,\ldots,f_m$. If $\LC(x) := \{f(x) : f\in\LC\} = T_xM$, then the system is locally accessible at $x$. If $f_0,f_1,\ldots,f_m$ are analytic vector fields, the criterion is also necessary.%

The concept of \emph{invariance entropy} is defined as follows. A pair $(K,Q)$ of subsets of $M$ is called \emph{admissible} if $K$ is compact and for every $x\in K$ there is $u\in\UC$ with $\varphi(\R_+,x,u) \subset Q$. In particular, if $K=Q$, this means that $Q$ is a compact and controlled invariant set. For $\tau>0$, a set $\SC\subset\UC$ is $(\tau,K,Q)$-spanning if for every $x\in K$ there is $u\in\SC$ with $\varphi([0,\tau],x,u) \subset Q$. Then $r_{\inv}(\tau,K,Q)$ denotes the number of elements in a minimal such set and we put $r_{\inv}(\tau,K,Q) := \infty$ if no finite $(\tau,K,Q)$-spanning set exists. The \emph{invariance entropy} of $(K,Q)$ is%
\begin{equation*}
  h_{\inv}(K,Q) := \limsup_{\tau\rightarrow\infty}\frac{1}{\tau}\log r_{\inv}(\tau,K,Q),%
\end{equation*}
where $\log$ is the natural logarithm. In the case $K=Q$, we also write $r_{\inv}(\tau,Q)$ and $h_{\inv}(Q)$. In this case, the upper limit is in fact a limit, as a consequence of subadditivity. In general, $h_{\inv}(K,Q)$ need not be finite. The number $h_{\inv}(Q)$, however, is finite iff $r_{\inv}(\tau,Q)$ is finite for one or, equivalently, for all $\tau>0$. We refer to \cite{Ka2} for further properties.%

\section{Upper Bounds for Control Sets}\label{sec_gub}%

In this section, we provide a general upper bound for the invariance entropy of a control set in terms of exponential growth rates of the induced system on the exterior bundle of the state space, similar to the well-known integral formula for the topological entropy of $\CC^{\infty}$-maps proved by Kozlovski \cite{Koz}.%

We consider a control system on a smooth manifold $M$ of the general form%
\begin{equation}\label{eq_gencs}
  \Sigma:\quad \dot{x}(t) = F(x(t),u(t)),\quad u \in \UC,%
\end{equation}
with a continuously differentiable right-hand side $F:M\tm\R^m \rightarrow TM$, such that $F_u := F(\cdot,u)$ is a vector field on $M$ for each $u\in\R^m$. The set $\UC$ of admissible control functions is given by%
\begin{equation*}
  \UC = \left\{u:\R\rightarrow\R^m\ :\ u \mbox{ is measurable with } u(t) \in U \mbox{ a.e.}\right\},%
\end{equation*}
where $U\subset\R^m$ is compact with $\inner U\neq\emptyset$. In particular, this includes the control-affine case of the preceding section. We assume that all solutions are defined on the whole time axis and thus we obtain a transition map $\varphi:\R\tm M\tm\UC\rightarrow M$, continuously differentiable in the second component. We also write $\varphi_{t,u}:M\rightarrow M$ for the diffeomorphism $\varphi(t,\cdot,u)$. Control sets and invariance entropy for systems of the general form \eqref{eq_gencs} are defined in the same way as for control-affine systems. The control flow $\phi_t:\UC\tm M \rightarrow \UC\tm M$ is still defined, though it may not be continuous w.r.t.~the weak$^*$-topology on $\UC$. Throughout this section, we assume that $M$ is endowed with a Riemannian metric.%

\subsection{A Ruelle-Pesin-Type Upper Bound}\label{subsec_rptub}

A pair $(\varphi(\cdot,x,u),u(\cdot))$ of a trajectory and the corresponding control function is called a \emph{controlled trajectory}. If the linearization of $\Sigma$ along $(\varphi(\cdot,x,u),u(\cdot))$ is controllable on a time interval $[\tau_1,\tau_2]$ with $\tau_1<\tau_2$, the controlled trajectory is called \emph{regular on $[\tau_1,\tau_2]$}. The linearization is the time-varying linear system%
\begin{equation*}
  \dot{z}(t) = A(t)z(t) + B(t)v(t),\quad v\in L^{\infty}(\R,\R^m),%
\end{equation*}
with $A(t) = (\partial F/\partial x)(\varphi(t,x,u),u(t))$ and $B(t) = (\partial F/\partial u)(\varphi(t,x,u),u(t))$ (written in local coordinates). A periodic controlled trajectory of period $\tau_*$ is called \emph{regular} if it is regular on $[0,\tau_*]$. An easy consequence of this definition is that a trajectory is regular on a time interval iff it is regular on some subinterval. This is made precise in the following proposition (see \cite[Prop.~1.28]{Ka2} for a proof).% 

\begin{proposition}\label{prop_regularity}
A controlled trajectory $(\varphi(\cdot,x,u),u(\cdot))$ is regular on a time interval $[\tau_1,\tau_2]$ iff there exists a non-trivial subinterval $[\rho_1,\rho_2]$, $\tau_1 \leq \rho_1 < \rho_2 \leq \tau_2$, such that $(\varphi(\cdot,x,u),u(\cdot))$ is regular on $[\rho_1,\rho_2]$.%
\end{proposition}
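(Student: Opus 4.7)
My plan is to view the statement as a monotonicity property for the controllability of the linearization: if it holds on a subinterval, it propagates to any enclosing interval. The $(\Rightarrow)$ direction is immediate by taking $[\rho_1,\rho_2]=[\tau_1,\tau_2]$, so the substance lies in $(\Leftarrow)$.

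For $(\Leftarrow)$, I would work with the standard input-to-state characterization of controllability. Let $d=\dim M$ and, in local coordinates along the trajectory, let $\Phi(t,s)$ denote the fundamental matrix of the homogeneous linearized system $\dot z = A(t)z$. Each $\Phi(t,s)$ is a linear automorphism of $\R^d$, and the cocycle identity $\Phi(t,r)\Phi(r,s)=\Phi(t,s)$ holds. Controllability of the linearization on an interval $[s,t]$ is equivalent to surjectivity of the reachable-set map
\begin{equation*}
  \Lambda_{s,t}:L^{\infty}([s,t],\R^m)\rightarrow \R^d,\qquad v\mapsto \int_s^t\Phi(t,\sigma)B(\sigma)v(\sigma)\,\rmd\sigma,
\end{equation*}
which follows from the variation-of-constants formula applied to the initial value $z(s)=0$.

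Given regularity on $[\rho_1,\rho_2]\subset[\tau_1,\tau_2]$, I would extend any $v\in L^{\infty}([\rho_1,\rho_2],\R^m)$ by zero to $\tilde v\in L^{\infty}([\tau_1,\tau_2],\R^m)$ and compute, using the cocycle identity,
\begin{equation*}
  \Lambda_{\tau_1,\tau_2}(\tilde v) \;=\; \int_{\rho_1}^{\rho_2}\Phi(\tau_2,\sigma)B(\sigma)v(\sigma)\,\rmd\sigma \;=\; \Phi(\tau_2,\rho_2)\,\Lambda_{\rho_1,\rho_2}(v).
\end{equation*}
Since $\Phi(\tau_2,\rho_2)$ is invertible and $\Lambda_{\rho_1,\rho_2}$ is surjective by hypothesis, the range of $\Lambda_{\tau_1,\tau_2}$ equals $\R^d$, yielding regularity on $[\tau_1,\tau_2]$.

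I do not expect any serious obstacle: the argument collapses to the displayed one-line identity once controllability is phrased via the reachable-set map. The only technical point worth noting is that $u(\cdot)$ is merely measurable, so $A(\cdot)$ and $B(\cdot)$ are measurable in $t$; but this is the standard Carath\'eodory setting in which both $\Phi(t,s)$ and the range characterization of controllability remain valid.
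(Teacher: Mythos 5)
Your proof is correct, and it is the standard argument for this monotonicity of LTV controllability: extend the control by zero, factor the transition matrix through $\rho_2$ via the cocycle identity, and conclude surjectivity of the larger reachable map from that of the smaller one. The paper itself does not give a proof (it refers to \cite[Prop.~1.28]{Ka2}), but this extension-by-zero / reachable-set argument is precisely the expected one; the equivalent phrasing via the controllability Gramian $W(\tau_1,\tau_2)\succeq\Phi(\tau_2,\rho_2)W(\rho_1,\rho_2)\Phi(\tau_2,\rho_2)^{T}$ would give the same conclusion in one line. Your remark on the Carath\'eodory setting is apposite: since $u$ is only measurable, $A(\cdot)$, $B(\cdot)$ are $L^{\infty}$ in $t$, and both the existence of $\Phi(t,s)$ and the reachable-set characterization of controllability are valid in that framework.
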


Given a pair $(u,x) \in \UC \tm M$, the \emph{Lyapunov exponents} at $(u,x)$ are the numbers%
\begin{equation*}
  \lambda(v;u,x) := \limsup_{t\rightarrow\infty}\frac{1}{t}\log\left|(\rmd\varphi_{t,u})_x v\right|,\quad v\in T_xM \backslash \{0_x\}.%
\end{equation*}
It is a standard fact that $\lambda(v;u,x)$ can take at most $d=\dim M$ different values. If $(\varphi(\cdot,x,u),u(\cdot))$ is periodic with period $\tau$, the operator $(\rmd\varphi_{\tau,u})_x$ maps $T_xM$ onto itself and the Lyapunov exponents are given by $(1/\tau)\log|\mu|$, $\mu$ being the eigenvalues of $(\rmd\varphi_{\tau,u})_x$. Then the \emph{multiplicity} of a Lyapunov exponent is defined as the sum of the algebraic multiplicities of the corresponding eigenvalues.%

The following lemma characterizes the elements of the interior of $\UC$ w.r.t.~$L^{\infty}(\R,\R^m)$. Its elementary proof can be found in \cite[Prop.~5.4]{Ka2}.%

\begin{lemma}\label{lem_intuchar}
For an element $u\in L^{\infty}(\R,\R^m)$ it holds that $u\in\inner\UC$ iff there exists a compact set $K \subset \inner U$ with $u(t)\in K$ for almost all $t\in\R$.%
\end{lemma}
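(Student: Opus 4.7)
The plan is to reduce the statement to the numerical condition that $t \mapsto d(u(t), U^c)$, where $d$ is the Euclidean distance in $\R^m$, is essentially bounded away from zero. This condition is equivalent to the existence of a compact $K \subset \inner U$ with $u(t) \in K$ a.e.: if the essential infimum equals some $\delta > 0$, the set $K := \{x \in U : d(x,U^c) \geq \delta/2\}$ is a compact subset of $\inner U$ containing $u(t)$ a.e.; conversely, any compact $K \subset \inner U$ has strictly positive distance $d(K,U^c)$ by compactness, giving a positive a.e.\ lower bound on $d(u(\cdot),U^c)$.

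The direction ``compact $K$ exists $\Rightarrow u \in \inner \UC$'' is then immediate: with $\ep := d(K,U^c) > 0$, every $v \in L^\infty(\R,\R^m)$ satisfying $\|v - u\|_\infty < \ep$ has $v(t) \in U$ a.e., hence lies in $\UC$.

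For the converse I would proceed by contrapositive, assuming that the essential infimum of $d(u(\cdot),U^c)$ is zero and showing that $u \notin \inner \UC$. For any prescribed $\ep > 0$ the goal is to construct a \emph{measurable} perturbation $v \in L^\infty(\R,\R^m)$ with $\|v - u\|_\infty < \ep$ but $v(t) \notin U$ on a set of positive measure. The main obstacle is the measurability of $v$, and I would avoid invoking a general measurable selection theorem by exploiting density: pick $\delta' \in (0,\ep)$ with $A := \{t : d(u(t),U^c) < \delta'\}$ of positive Lebesgue measure, enumerate the countable dense subset $U^c \cap \mathbb{Q}^m$ of the open set $U^c$ as $\{q_1, q_2, \ldots\}$, and decompose $A = \bigcup_k A_k$ with $A_k := \{t \in A : |u(t) - q_k| < \delta'\}$. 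Each $A_k$ is measurable as the preimage of an open set under the measurable map $u$, and the cover follows because $B(u(t),\delta') \cap U^c$ is a nonempty open subset of $\R^m$ for every $t \in A$ and hence contains a rational point. Some $A_k$ therefore has positive measure, and setting $v := q_k$ on $A_k$ and $v := u$ elsewhere yields the desired perturbation, contradicting $u \in \inner \UC$.
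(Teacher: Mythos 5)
Your proof is correct. The paper itself does not reproduce the argument for this lemma but simply cites \cite[Prop.~5.4]{Ka2}, so a direct comparison with the source is not possible here; I can only assess the argument on its own terms, and it is complete. The reduction to positivity of the essential infimum of $t\mapsto d(u(t),U^c)$ is a clean reformulation, the forward direction via $\ep := d(K,U^c)>0$ is immediate, and the genuinely delicate point — producing a \emph{measurable} perturbation $v$ with $\|v-u\|_\infty<\ep$ that leaves $U$ on a positive-measure set — is handled properly by your countable cover $A=\bigcup_k A_k$ indexed by $U^c\cap\mathbb{Q}^m$ together with pigeonhole, which keeps the perturbation constant on a single measurable set and thereby avoids any appeal to measurable selection. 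Two small remarks for tightness: measurability of $A$ itself rests on the continuity of $x\mapsto d(x,U^c)$ (not just on measurability of $u$), and the cover argument uses that $U^c\neq\emptyset$, which holds since $U\subset\R^m$ is compact and therefore a proper subset. An equally standard alternative would be to consider the essential range $R(u)\subset U$: if no compact $K$ as in the statement exists then $R(u)$ meets $\partial U$, and pushing a single point of $R(u)\cap\partial U$ slightly across the boundary to a nearby $q\in U^c$ gives the same type of constant perturbation on a positive-measure set. Both routes are of comparable length; yours trades the notion of essential range for a rational-point pigeonhole.
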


\begin{theorem}{(\cite[Thm.~4.3]{Ka1}, \cite[Thm.~5.1]{Ka2})}\label{thm_upperboundcs}
Let $D\subset M$ be a control set of $\Sigma$ with nonempty interior and compact closure, and let $(\varphi(\cdot,x,u),u(\cdot))$ be a regular periodic controlled trajectory with $(u,x) \in \inner\UC\tm\inner D$. Then for every compact $K\subset D$ we have%
\begin{equation*}
  h_{\inv}(K,D) \leq \sum_\lambda\max\{0,d_{\lambda}\lambda\},%
\end{equation*}
the sum taken over the Lyapunov exponents $\lambda$ at $(u,x)$ with associated multiplicities $d_{\lambda}$.%
\end{theorem}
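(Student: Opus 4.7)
My plan is to combine two ingredients: approximate controllability to reduce spanning a compact $K \subset D$ to spanning a small ball at $x$, and regularity of the linearization to furnish a local controller whose iteration converts the spanning problem into a volume-covering question on the unstable subspace of the monodromy operator $T := (\rmd\varphi_{\tau_*,u})_x$, where $\tau_*$ denotes the period. For the reduction, since $x \in \inner D$ and $D$ has nonempty interior (so the no-return property applies), approximate controllability together with compactness of $K$ yields, for any $\delta > 0$, a common time $T(\delta) > 0$ and finitely many admissible controls $w_1,\ldots,w_{N(\delta)} \in \UC$ steering every $y \in K$ into $B(x,\delta)$ with trajectory inside $D$. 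This prefix contributes only the multiplicative factor $N(\delta)$ to the minimal spanning number, independent of the time horizon, so it will suffice to bound $h_{\inv}(\cl B(x,\delta),D)$.

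Next I would build the local controller. Regularity of $(\varphi(\cdot,x,u),u(\cdot))$ on $[0,\tau_*]$ is precisely the assertion that the partial derivative $\partial_v\Psi(x,0)$ of
\begin{equation*}
  \Psi(y,v) := \varphi(\tau_*,y,u+v), \qquad (y,v) \mbox{ near } (x,0) \mbox{ in } M \tm L^\infty([0,\tau_*],\R^m),
\end{equation*}
is surjective onto $T_xM$. I would split $T_xM = E^+ \oplus E^{cs}$ into the generalized eigenspaces of $T$ of eigenvalue modulus $>1$ and $\leq 1$ respectively, with projection $P^+$ onto $E^+$. An implicit function theorem argument then produces an $\alpha > 0$ and a Lipschitz map $\xi \in B(0,\alpha) \subset T_xM \mapsto v(\xi) \in L^\infty$ with $\|v(\xi)\|_\infty = O(|\xi|)$ and
\begin{equation*}
  \varphi\bigl(\tau_*, \exp_x\xi, u + v(\xi)\bigr) = \exp_x\bigl(P^+T\xi\bigr) + O(|\xi|^2).
\end{equation*}
Lemma \ref{lem_intuchar} and $u \in \inner\UC$ ensure $u + v(\xi) \in \UC$ for small $\xi$, while $x \in \inner D$ ensures that the trajectory stays in $D$ over $[0,\tau_*]$.

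The third step is iteration. I would fix an inner product on $T_xM$ adapted to $T$ so that $\bigl|\det(T|_{E^+})\bigr| = \rme^{\tau_*\sum_\lambda d_\lambda\lambda^+}$ with $\lambda^+ := \max\{0,\lambda\}$, and so that $T$ acts as a (weak) contraction on $E^{cs}$. For small $\delta > 0$ and $n \in \N$, concatenating the local controllers $v(\xi_0), v(\xi_1),\ldots,v(\xi_{n-1})$ with $\xi_{k+1} = P^+T\xi_k + O(|\xi_k|^2)$ produces, from each $\xi_0 \in B(0,\delta) \cap E^+$, an admissible control steering $\exp_x\xi_0$ through $n$ periods, with trajectory close to the periodic orbit and endpoint near $\exp_x((T|_{E^+})^n\xi_0)$. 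Two starting points differing by $\eta$ with $|(T|_{E^+})^n P^+\eta|$ sufficiently small are handled by the same control, so $B(x,\delta)$ is covered by at most $C\cdot\bigl|\det(T|_{E^+})\bigr|^n = C\cdot\rme^{n\tau_*\sum_\lambda d_\lambda\lambda^+}$ such controls. Dividing $\log r_{\inv}$ by $n\tau_*$ and letting $n \to \infty$ then yields $h_{\inv}(\cl B(x,\delta),D) \leq \sum_\lambda d_\lambda\lambda^+$, which combined with the reduction gives the claim.

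The hard part will be making the iteration uniform in $n$: the $O(|\xi_k|^2)$ errors must not accumulate across $n$ steps so as to push $u + v$ out of $\UC$ or force the trajectory out of $D$. Handling this requires exploiting $u \in \inner\UC$ and $x \in \inner D$ as buffers, choosing a Lyapunov-type inner product on $T_xM$ to render the expansion on $E^+$ and the contraction on $E^{cs}$ quantitative, and keeping careful track of the nested neighborhoods on which the iterated controller remains valid. Once this bookkeeping is in place, $\sum_\lambda d_\lambda\lambda^+$ arises simply as the exponential volume growth rate of $T|_{E^+}$.
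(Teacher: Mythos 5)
This theorem is not proved in the paper: it is cited from [Ka1, Thm.~4.3] and [Ka2, Thm.~5.1], and your overall strategy — reduce to spanning a small ball via approximate controllability, build a one-period local controller from regularity via the implicit function theorem, then iterate and count cells on the unstable subspace — does match the cited proof. There is, however, a concrete flaw in your iteration. Your one-period controller steers $\exp_x\xi$ to $\exp_x(P^+T\xi) + O(|\xi|^2)$, which cancels only the contracting component $P^{cs}T\xi$ and leaves the expanding component $T|_{E^+}\xi^+$ intact. Iterating with $\xi_{k+1} = P^+T\xi_k + O(|\xi_k|^2)$ therefore sends $|\xi_k| \approx |(T|_{E^+})^k\xi_0|$ growing geometrically, so for any $\xi_0 \in B(0,\delta) \cap E^+$ with $|\xi_0|$ bounded below, the controls $\|v(\xi_k)\|_\infty = O(|\xi_k|)$ eventually exceed the buffer afforded by $u \in \inner\UC$, and the state leaves $D$ after roughly $\log(\delta/|\xi_0|)/\log\lambda$ periods, far short of $n$. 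Your own sentence already exposes the tension: the endpoint being near $\exp_x((T|_{E^+})^n\xi_0)$ is incompatible with the trajectory remaining near the periodic orbit once $|(T|_{E^+})^n\xi_0|$ is no longer small. This is not a problem of the $O(|\xi_k|^2)$ errors accumulating; the linear term already breaks the scheme.

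The repair, and what the cited proof actually does, is to steer the cell center back to the periodic orbit, not onto its expanded image: regularity plus the implicit function theorem yields $v(\xi)$ with $\|v(\xi)\|_\infty = O(|\xi|)$ and $\varphi(\tau_*,\exp_x\xi,u+v(\xi)) = x$. Then the anchor trajectory for each cell stays on the periodic orbit for all $n$ periods (so $\|v\|$ stays $O(\delta)$ throughout, no growth), and the deviation of a nearby initial condition $\xi'$ evolves as $\approx T^k(\xi'-\xi)$ under the same control. Keeping this $\ep$-small for all $k \leq n$ requires $\xi'-\xi$ to lie in an anisotropic cell of width $\sim \ep/|\mu_j|^n$ along each unstable eigendirection, giving $\sim |\det(T|_{E^+})|^n$ cells and hence the bound $\sum_\lambda d_\lambda\lambda^+$. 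Your covering observation at the end of the paragraph is exactly the right mechanism; it just needs the corrected one-period target, after which the error-propagation bookkeeping you flag as "the hard part" becomes the genuine (and tractable) technical core of the argument.
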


As proved in \cite[Prop.~5.11]{Ka2}, the assumptions of regularity and periodicity in this theorem can be weakened under additional assumptions on the given system, involving a weak hyperbolicity assumption. In the following, we give a proof of this result without such an assumption. The general idea is to approximate the Lyapunov exponents of arbitrary trajectories in $D$ by the Lyapunov exponents of regular periodic ones. For this approximation to work, it must be guaranteed that from every $x\in\inner D$ a regular periodic trajectory emanates. A quite general result implying the existence of regular trajectories was proved by Coron \cite[Thm.~1.3]{Cor}. In combination with the complete approximate controllability on $D$ it yields the desired regular periodic orbits.%

\subsection{Existence of Regular Trajectories}\label{subsec_regtraj}%

In the following, we give a brief account of the central definitions in \cite{Cor} in order to explain Coron's theorem. For an open set $V \subset \R^m$, we let $\CC_V^{\infty}(TM)$ be the set of all $\CC^{\infty}$-maps $f:M \tm V \rightarrow TM$ with $f(x,u) \in T_xM$. For $f_1,f_2 \in \CC_V^{\infty}(TM)$ we define the Lie bracket $[f_1,f_2]\in \CC_V^{\infty}(TM)$ by%
\begin{equation*}
  [f_1,f_2](x,u) := [f_1(\cdot,u),f_2(\cdot,u)](x),%
\end{equation*}
where on the right-hand side $[\cdot,\cdot]$ is the usual Lie bracket of vector fields. The \emph{strong jet accessibility algebra} of an element $f\in\CC^{\infty}_V(TM)$ is the linear subspace $\AC=\AC(f)$ of $\CC^{\infty}_V(TM)$ defined as%
\begin{equation*}
  \AC := \spn\left\{\left\{\partial^{|\alpha|}f/\partial u^{\alpha}: \alpha\in\N_0^m, \alpha\neq 0\right\} \cup \mbox{Br}_2\left\{\partial^{|\alpha|}f/\partial u^{\alpha}:\alpha\in\N_0^m\right\}\right\},%
\end{equation*}
where for a family $\FC\subset \CC^{\infty}_V(TM)$, $\mbox{Br}_2(\FC)$ denotes the set of iterated Lie brackets of elements of $\FC$ of length at least two. For example, $\partial f/\partial u^i$, $[f,\partial f/\partial u^i]$, and $\partial^2f/\partial u^i\partial u^j$ are in $\AC$.%

Let $\LC_0$ denote the classical strong accessibility algebra, i.e., the ideal generated by the differences $f(\cdot,u_1) - f(\cdot,u_2)$, $u_1,u_2\in V$, in the Lie algebra generated by the vector fields $f(\cdot,u)$, $u\in V$. Then%
\begin{equation}\label{eq_14}
  \{g(x,u):g\in\AC\} \subset \{g(x):g\in\LC_0\} = \LC_0(x),\ \forall (x,u) \in M \tm V.%
\end{equation}
These inclusions are equalities if, e.g., $f$ is a polynomial w.r.t.~$u$ (including the control-affine case) or if $f$ and $M$ are analytic and $V$ is connected. For $(x,u) \in M\tm V$, let%
\begin{equation*}
  a(x,u) := \{g(x,u):g\in\AC\} \subset T_xM.%
\end{equation*}
If \eqref{eq_14} is an equality, then $a(x,u_1) = a(x,u_2)$ for all $u_1,u_2\in V$. For the convenience of the reader, we give a proof for the control-affine case.%

\begin{proposition}
Assume that $f(x,u) = f_0(x) + \sum_{i=1}^m u_i f_i(x)$ with $\CC^{\infty}$-vector fields $f_0,f_1,\ldots,f_m$. Then%
\begin{equation*}
  a(x,u) = \LC_0(x),\quad \forall (x,u) \in M \tm V.%
\end{equation*}
\end{proposition}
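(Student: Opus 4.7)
The plan is to prove the equality by establishing both inclusions separately. One direction, $a(x,u) \subset \LC_0(x)$, is already contained in \eqref{eq_14}; the work is in the reverse inclusion $\LC_0(x) \subset a(x,u)$. First I would unpack what $\AC$ looks like in the control-affine case. Since $\partial f/\partial u^i = f_i$ for $i \in \{1,\ldots,m\}$ and $\partial^{|\alpha|}f/\partial u^\alpha = 0$ for $|\alpha| \geq 2$, the algebra $\AC$ equals the linear span of the constant-in-$u$ vector fields $f_1,\ldots,f_m$ together with all iterated Lie brackets of length $\geq 2$ formed from the family $\{f, f_1, \ldots, f_m\} \subset \CC_V^\infty(TM)$. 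Since $\LC_0$ is spanned over $\R$ by iterated brackets of $\{f_0, f_1, \ldots, f_m\}$ that contain at least one leaf $f_i$ with $i \geq 1$, it suffices to prove $X(x) \in a(x,u)$ for every such bracket $X$.

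I would run an induction on the number $n_0(X)$ of $f_0$-leaves in $X$. In the base case $n_0(X) = 0$, the bracket $X$ involves only $f_1,\ldots,f_m$: for $X = f_i$ we have $X = \partial f/\partial u^i \in \AC$ directly, and if the length of $X$ is at least $2$ then $X \in \mbox{Br}_2\{\partial^{|\alpha|}f/\partial u^\alpha\} \subset \AC$, so in either case $X(x) = X(x,u) \in a(x,u)$.

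For the inductive step, assume $n_0(X) \geq 1$, so that the length of $X$ is at least $2$ (since $X \in \LC_0$ must also contain some $f_i$ with $i \geq 1$). I would introduce $\tilde X \in \CC_V^\infty(TM)$ by replacing each $f_0$-leaf of $X$ with $f$; then $\tilde X$ is an iterated bracket of $\{f, f_1, \ldots, f_m\}$ of length at least $2$, hence $\tilde X \in \mbox{Br}_2\{\partial^{|\alpha|}f/\partial u^\alpha\} \subset \AC$. Substituting $f = f_0 + \sum_j u_j f_j$ and using multilinearity of the Lie bracket gives
\begin{equation*}
\tilde X(\cdot,u) = X + \sum_{\sigma \neq 0} u^\sigma X_\sigma,
\end{equation*}
where $\sigma$ ranges over assignments of each $f$-leaf of $\tilde X$ to some index in $\{0,1,\ldots,m\}$, the trivial assignment (all zeros) reproduces $X$, and $X_\sigma$ is the iterated bracket in $\{f_0, f_1, \ldots, f_m\}$ corresponding to $\sigma$. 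Every non-trivial $X_\sigma$ has strictly fewer $f_0$-leaves than $X$ (because at least one original $f_0$ has been replaced by some $f_j$, $j \geq 1$) and still carries the non-$f_0$-leaves of $X$, so $X_\sigma \in \LC_0$. Rearranging the expansion and evaluating at $x$,
\begin{equation*}
X(x) = \tilde X(x,u) - \sum_{\sigma \neq 0} u^\sigma X_\sigma(x);
\end{equation*}
the first term lies in $a(x,u)$ by construction of $\tilde X$, and each $X_\sigma(x)$ does so by the inductive hypothesis, so the linearity of $a(x,u)$ forces $X(x) \in a(x,u)$ and closes the induction.

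The main difficulty I expect is purely bookkeeping for the multilinear expansion of $\tilde X$: one has to verify that every non-trivial term $X_\sigma$ has strictly fewer $f_0$-leaves than $X$ and still lies in $\LC_0$, so that the inductive hypothesis applies. Both facts are immediate from the construction — every non-trivial $\sigma$ converts at least one original $f_0$-leaf into some $f_j$ with $j \geq 1$, while leaving the original $f_i$-leaves ($i \geq 1$) of $X$ untouched — so no essential obstacle remains beyond making the combinatorics of general bracket trees precise.
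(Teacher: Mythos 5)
Your proof is correct, and it is in fact more complete than the paper's own argument. Both proofs begin the same way: one identifies $\AC$ in the control-affine case by noting that $\partial f/\partial u^i = f_i$, that higher-order $u$-partials vanish, and that $\mbox{Br}_2\{\partial^{|\alpha|}f/\partial u^{\alpha}\}$ consists of brackets of length at least two built from $\{f, f_1,\ldots,f_m\}$. The paper, however, only verifies the equality at $u=0$, where the leaf $f$ collapses to $f_0$, and then closes with the phrase \emph{``implying the assertion''} without explaining why $a(x,u)=\LC_0(x)$ for $u\neq 0$. That inference is not automatic: the earlier remark in the text that equality in \eqref{eq_14} implies $u$-independence of $a(x,u)$ is itself conditional on the very equality being proved. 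Your induction on the number $n_0(X)$ of $f_0$-leaves supplies exactly the missing argument: replacing the $f_0$-leaves of a generating bracket $X$ by $f$ puts $\tilde X$ into $\mbox{Br}_2 \subset \AC$, the multilinear expansion of $\tilde X(\cdot,u)$ isolates $X$ plus lower-order terms in $n_0$, and the inductive hypothesis together with linearity of $a(x,u)$ forces $X(x)\in a(x,u)$ for \emph{every} $u\in V$. An alternative, more structural way to bridge the same gap is to observe that the span of length-$\geq 2$ brackets of any generating set of a Lie algebra $\fg$ equals $[\fg,\fg]$; since $\{f(\cdot,u),f_1,\ldots,f_m\}$ generates the same $\LC$ for every $u$, one gets $a(x,u)=\spn\{f_1(x),\ldots,f_m(x)\}+[\LC,\LC](x)=\LC_0(x)$ uniformly in $u$. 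Your combinatorial induction is more elementary and just as decisive.
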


\begin{proof}
The elements of the ideal $\LC_0$ are linear combinations of iterated Lie brackets of the form%
\begin{equation*}
   [X_k,[X_{k-1},[\ldots,[X_1,f_j] \ldots ]]],\quad j \in \{1,\ldots,m\},\ k\in\N_0,%
\end{equation*}
where $X_i \in \{f_0,f_1,\ldots,f_m\}$ (cf.~Nijmeijer and van der Schaft \cite[Prop.~3.20]{NSc}). The partial derivatives $\partial f/\partial u^i$ of first order are the vector fields $f_1,\ldots,f_m$ (regarded as functions on $M \tm V$ rather than $M$). Hence, the higher-order derivatives vanish. The iterated Lie brackets in $\mbox{Br}_2\left\{\partial^{|\alpha|}f/\partial u^{\alpha}:\alpha\in\N_0^m\right\}$ are of the form%
\begin{equation*}
  [X_k,[X_{k-1},[\ldots,[X_1,X_0]\ldots]]], \quad X_i \in \left\{f_1,\ldots,f_m,f_0 + \sum_{i=1}^m u_i f_i\right\},\ k\in\N.%
\end{equation*}
In particular, $a(x,0) = \LC_0(x)$ for all $x\in M$, implying the assertion.%
\end{proof}

Let $x\in M$ and $u$ be a smooth map with values in $V$ defined on a neighborhood of $x$. Let $f_0(y) = f(y,u(y))\in T_yM$, and for $i \in \{1,\ldots,m\}$ let $f_i(y) = (\partial f/\partial u^i)(y,u(y))$. We define $a_l(x;u) \subset T_xM$ by%
\begin{equation*}
  a_l(x;u) := \spn\left\{\ad^k_{f_0}(f_i)(x),\ k\geq0,\ i\in\{1,\ldots,m\}\right\},%
\end{equation*}
where $\ad^0_{f_0}(f_i) = f_i$ and $\ad^k_{f_0}(f_i) = [f_0,\ad^{k-1}_{f_0}(f_i)]$. The subspace $a_l(x;u)$ can be interpreted as follows. Let $\gamma:I\rightarrow M$ be a smooth curve, where $I\subset\R$ is an open interval with $0\in I$ such that%
\begin{equation*}
  \dot{\gamma}(t) = f(\gamma(t),u(\gamma(t))),\quad \gamma(0) = x.%
\end{equation*}
The linearized control system along $\gamma$ is the time-varying linear system%
\begin{equation}\label{eq_timevarcs}
  \dot{z}(t) = A(t)z(t) + B(t)w(t)%
\end{equation}
with%
\begin{equation*}
  A(t) = \frac{\partial f}{\partial x}(\gamma(t),u(\gamma(t))),\quad B(t)w = \sum_{i=1}^m w_i \frac{\partial f}{\partial u_i}(\gamma(t),u(\gamma(t))),%
\end{equation*}
where $w$ is the control and $z(t) \in T_{\gamma(t)}M$ the state. It can be shown that%
\begin{equation}\label{eq_alchar}
  a_l(x;u) = \spn\left\{\left[\left(\frac{\rmd}{\rmd t} - A(t)\right)^iB(t)\right]_{t=0}w;\ w\in\R^m,\ i\geq0\right\}.%
\end{equation}
The right-hand side is the strong accessibility algebra evaluated at $t=0$ of the time-varying linear system \eqref{eq_timevarcs}. In particular, if $a_l(x;u)$ has full dimension, then the linearized system is controllable on every time interval containing $t=0$ (cf.~Sontag \cite[Cor.~3.5.18]{Son}). We say that a control function $u\in \CC^{\infty}(M,V)$ \emph{saturates} $f$ at $x$ if%
\begin{equation*}
  a_l(x;u) = a(x,u(x)).%
\end{equation*}
Moreover, $u$ saturates $f$ on a subset $S\subset M$ if it saturates $f$ at all points of $S$. (We remark that $a_l(x;u) \subset a(x,u(x))$.) Let $Y$ be a smooth manifold, $h\in \CC^{\infty}(M,Y)$, and%
\begin{equation*}
  \Omega \subset \left\{u \in \CC^{\infty}(Y,V) : (\rmd h)_x f(x,u\circ h(x))\neq 0,\forall x\in M\right\}.%
\end{equation*}
Then the result of Coron (\cite[Thm.~1.3]{Cor}) reads as follows.%

\begin{theorem}\label{thm_coron}
Assume that $\Omega$ is open in the $\CC^{\infty}$-topology. Then the set of all $u\in\Omega$ such that $u\circ h$ saturates $f$ on $M$ is the intersection of countably many open and dense subsets of $\Omega$ (in the $\CC^{\infty}$-topology). In particular, this set is dense in $\Omega$.%
\end{theorem}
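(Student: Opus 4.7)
My plan is to realize the saturating set as a countable intersection of open dense subsets of $\Omega$ in the $\CC^{\infty}$-topology and then apply the Baire category theorem. Since $M$ is second countable, fix an exhaustion $K_1\subset K_2\subset\cdots$ of $M$ by compact sets with $M=\bigcup_n K_n$. A control $u\in\Omega$ has $u\circ h$ saturating $f$ on $M$ iff it saturates on each $K_n$, so it suffices to prove that
\[
  \Omega_n := \{u\in\Omega : u\circ h \text{ saturates } f \text{ on } K_n\}
\]
is open and dense in $\Omega$ for every $n\in\N$; Baire's theorem then finishes the job, since $\Omega$, being an open subset of a Fréchet space, is a Baire space.

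\textbf{Openness of $\Omega_n$.} The inclusion $a_l(x;u\circ h)\subset a(x,u(h(x)))$ always holds, so saturation at $x$ amounts to $\dim a_l(x;u\circ h)\ge\dim a(x,u(h(x)))$. By \eqref{eq_alchar}, each generator of $a_l(x;u\circ h)$ is a polynomial expression in a finite jet of $u$ at $h(x)$ and in derivatives of $F$; hence $\dim a_l(\cdot;\cdot\circ h)$ is lower semicontinuous on $M\times\Omega$ with respect to the $\CC^{\infty}$-topology. Combining this with a dimension stratification of $K_n$ according to $\dim a(x,u_0(h(x)))$, together with a local rank argument controlling how $\dim a$ can jump under small $\CC^{\infty}$-perturbations of $u_0$, shows that $\Omega_n$ is open at every $u_0\in\Omega_n$.

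\textbf{Density of $\Omega_n$.} This is the heart of the argument. Fix $u_0\in\Omega$ and a $\CC^{\infty}$-neighborhood $\WC$ of $u_0$. The key input from \eqref{eq_alchar} is that the iterated brackets $\ad^k_{f_0}(f_i)(x_0)$ depend on derivatives of $u$ at $h(x_0)$ of order $\le k$, and, viewed as a function of those derivatives, they can be prescribed to realize any target vectors in the ambient bound $a(x_0,u(h(x_0)))$. Therefore, for each $x_0\in K_n$ at which saturation fails, I would perturb $u_0$ by a jet adjustment supported in a small neighborhood of $h(x_0)\in Y$, using bump functions, so that $a_l(x_0;u\circ h)=a(x_0,u(h(x_0)))$ at the new control $u$, while keeping $u\in\Omega\cap\WC$ (which is possible because $\Omega$ is open and the perturbation can be made arbitrarily small in $\CC^{\infty}$). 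By the openness step, saturation then holds on an open neighborhood $V_{x_0}\subset M$ of $x_0$. Covering $K_n$ by finitely many such $V_{x_i}$ and iterating the perturbation — each stage small enough, with support disjoint from or transverse to earlier jet data — produces a control in $\Omega_n\cap\WC$.

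The principal obstacle is the iterative nature of this density argument: each new jet perturbation must not destroy saturation at previously treated points. The openness established above reduces the construction to finitely many stages on any given $K_n$, so the difficulty is quantitative (controlling the size and support of each perturbation on $Y$, and verifying that the jet prescriptions near distinct points $h(x_i)$ can be patched via a partition of unity) rather than qualitative. Once this is done on every $K_n$, Baire's theorem gives density of $\bigcap_n\Omega_n$ in $\Omega$, proving the theorem.
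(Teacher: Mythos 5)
The paper does not prove this theorem; it cites it from Coron \cite[Thm.~1.3]{Cor} and immediately applies it, so there is no internal proof to compare against. Your Baire-category framework (exhaustion $M=\bigcup K_n$, prove each $\Omega_n$ open and dense, intersect) is the natural skeleton and plausibly matches Coron's structure, but both of the substantive steps contain genuine gaps.

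\textbf{Openness.} You claim $\Omega_n$ is open, citing lower semicontinuity of $\dim a_l$ plus ``a dimension stratification of $K_n$'' and ``a local rank argument.'' These phrases replace precisely the hard point. The difficulty is that the ambient bound $\dim a(x,u(h(x)))$ is itself only \emph{lower} semicontinuous in $(x,u)$: if $u_0$ saturates at $x_0$ with $\dim a(x_0,u_0(h(x_0)))=k$, a $\CC^{\infty}$-small perturbation of $u_0$ can move the value $u(h(x_0))$ to a point where $\dim a(x_0,\cdot)$ jumps to $k+1$, while lower semicontinuity of $\dim a_l$ only guarantees $\dim a_l(x_0;u\circ h)\geq k$. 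The inclusion $a_l\subset a$ then gives no contradiction, and saturation may genuinely fail for the perturbed control. (When $f$ is, say, control-affine, $a(x,u)$ does not depend on $u$, which removes part of the problem, but the theorem is stated for general smooth $f$.) So the set $\Omega_n$ you define is not visibly open, and the openness argument is not a proof. Whatever countable family of open sets Coron uses, it is almost certainly not literally ``saturation on $K_n$.''

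\textbf{Density.} The sentence claiming that the iterated brackets $\ad^k_{f_0}(f_i)(x_0)$, viewed as functions of the jet of $u$ at $h(x_0)$, ``can be prescribed to realize any target vectors in the ambient bound $a(x_0,u(h(x_0)))$'' is essentially the local content of Coron's theorem itself — it is the nontrivial algebraic fact that a generic jet already yields $a_l=a$ at a point — and it is asserted rather than argued. Without establishing it (by some induction on bracket length or an algebraic independence argument over the jet space), the subsequent perturbation-and-patching construction has no engine. The remaining concern you yourself flag — whether successive jet modifications near distinct points $h(x_i)\in Y$ can be patched without interference — is also real, particularly since $h$ need not be injective, but it is secondary to the two missing ingredients above.
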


\begin{remark}
The $\CC^{\infty}$-topology used in the above theorem is finer than the classical Whitney $\CC^{\infty}$-topology. However, as remarked by Coron, the theorem also holds for the Whitney $\CC^{\infty}$-topology.%
\end{remark}

In order to obtain the existence of regular trajectories through every point, we apply the theorem to the extended system on $M\tm\R$, given by%
\begin{equation*}
  \Sigma^*:\ \left\{\begin{array}{rcl} \dot{x}(t) \!\!&=&\!\! f(x(t),u(t))\\ \dot{t} \!\!&=&\!\! 1 \end{array}\right.,\quad u \in L^{\infty}(\R,V).%
\end{equation*}
Putting $Y := \R$, $h(x,t) :\equiv t$,  and $\tilde{f} := (f,1)^T$, we obtain%

\begin{corollary}\label{cor_extendedcs}
The set of $u \in \CC^{\infty}(\R,V)$ such that $u\circ h$ saturates $\tilde{f}$ on $M\tm \R$ is dense in $\CC^{\infty}(\R,V)$.%
\end{corollary}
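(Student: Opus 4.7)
The plan is to apply Theorem \ref{thm_coron} directly to the extended system $\Sigma^*$ on $M \tm \R$, with the projection $h:M\tm\R\to\R$, $h(x,t) = t$, and the vector field $\tilde{f}(x,t,u) = (f(x,u),1)^T$. The aim is to choose $\Omega$ as large as possible, namely the entire space $\CC^\infty(\R,V)$, and then read off density from Coron's conclusion.

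First I would verify that Coron's admissibility hypothesis is automatic in this setting. For any $u\in\CC^\infty(\R,V)$ and any $(x,t)\in M\tm\R$, the composition $u\circ h$ satisfies $(u\circ h)(x,t) = u(t)$, and
\begin{equation*}
  (\rmd h)_{(x,t)}\tilde{f}((x,t),u(h(x,t))) = (\rmd h)_{(x,t)}(f(x,u(t)),1) = 1 \neq 0,
\end{equation*}
since $\rmd h$ is just the projection onto the $\R$-factor. Consequently, the full space $\Omega := \CC^\infty(\R,V)$ is contained in
\begin{equation*}
  \{ u \in \CC^\infty(\R,V) : (\rmd h)_{(x,t)}\tilde{f}((x,t),u\circ h(x,t)) \neq 0 \mbox{ for all } (x,t)\in M\tm\R\},
\end{equation*}
and $\Omega$ is trivially open in the $\CC^\infty$-topology as the whole ambient space.

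Once this is in place, Theorem \ref{thm_coron} applied to $\tilde{f}$, $Y=\R$, $h$ and $\Omega = \CC^\infty(\R,V)$ gives that the set of $u\in\CC^\infty(\R,V)$ for which $u\circ h$ saturates $\tilde{f}$ at every point of $M\tm\R$ is the intersection of countably many open and dense subsets of $\CC^\infty(\R,V)$, hence in particular dense, which is exactly what the corollary asserts. Since the work is essentially bookkeeping, I do not expect a serious obstacle; the only thing worth checking carefully is that the notational setup of Coron's theorem is being instantiated correctly (the underlying state manifold is $M\tm\R$, not $M$), and that $\Omega$ is indeed allowed to coincide with the ambient $\CC^\infty(\R,V)$, which it is because the nondegeneracy condition collapses to the tautology $1\neq 0$.
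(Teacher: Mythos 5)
Your proof is correct and follows the same approach as the paper: apply Theorem \ref{thm_coron} to the extended system with $Y=\R$, $h(x,t)=t$, noting that $(\rmd h)_{(x,t)}\tilde{f}((x,t),u(t)) \equiv 1$ never vanishes so that $\Omega$ can be taken to be all of $\CC^\infty(\R,V)$. You spell out the bookkeeping in a bit more detail than the paper does, but the argument is the same.
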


\begin{proof}
We just need to note that the vector $(\rmd h)_{(x,t)}\tilde{f}((x,t),u(t)) \equiv 1$ never vanishes and hence we can put $\Omega = \CC^{\infty}(\R,V)$ in Theorem \ref{thm_coron}.%
\end{proof}

Now we can deduce the result on the existence of regular trajectories.%

\begin{corollary}\label{cor_regtrajex}
Let $S\subset M$ and assume $a(x,u) = T_xM$ for all $(x,u) \in S \tm V$. Then there is a dense set of $u_0 \in \CC^{\infty}(\R,V)$ such that for every $x\in S$ the controlled trajectory $(\varphi(\cdot,x,u_0),u_0(\cdot))$ is regular on every time interval of the form $[0,\tau]$.%
\end{corollary}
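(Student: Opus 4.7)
The plan is to apply Corollary \ref{cor_extendedcs} to the extended system $\Sigma^*$ and reduce the saturation condition for $\tilde f$ at $((x,0),u_0(0))$ to the equality $a_l(x;u_0) = a(x,u_0(0))$ for the original system $\Sigma$. Once this reduction is in place, the hypothesis $a(x,u)=T_xM$ on $S \tm V$ yields $a_l(x;u_0) = T_xM$ for every $x \in S$, and the controllability criterion noted after \eqref{eq_alchar} immediately forces the linearization of $\Sigma$ along $(\varphi(\cdot,x,u_0),u_0)$ to be controllable on every time interval containing $0$, hence regular on every $[0,\tau]$. Density of the desired set of $u_0$ is then inherited from Corollary \ref{cor_extendedcs}.

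The reduction rests on two block computations that exploit the special structure $\tilde f((y,s),v) = (f(y,v),1)$. First, the partial derivatives $\partial^{|\alpha|}\tilde f/\partial v^\alpha$ for $|\alpha|\geq1$ have vanishing $\R$-component; moreover, $\tilde f$ and these derivatives all have constant $\R$-component and no dependence on $s$, so every iterated Lie bracket of length $\geq 2$ has zero $\R$-component and its $M$-component agrees with the corresponding bracket of $f$ and its $v$-derivatives. This produces the identification $a((x,0),u_0(0)) = a(x,u_0(0)) \tm \{0\}$. Second, writing $\tilde f_0(y,s) := \tilde f((y,s),u_0(s))$ and $\tilde f_i(y,s) := (\partial\tilde f/\partial v^i)((y,s),u_0(s))$, the Jacobians along the extended trajectory $(\varphi(t,x,u_0),t)$ take the block forms
\begin{equation*}
  \tilde A(t) = \begin{pmatrix} A(t) & B(t)u_0'(t) \\ 0 & 0 \end{pmatrix}, \qquad \tilde B(t) = \begin{pmatrix} B(t) \\ 0 \end{pmatrix},
\end{equation*}
where $A(t),B(t)$ are the Jacobians of $f$ along the original trajectory. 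An easy induction then gives $(\rmd/\rmd t - \tilde A(t))^i \tilde B(t) = ((\rmd/\rmd t - A(t))^i B(t),0)^T$, and via \eqref{eq_alchar} this yields the second identification $a_l((x,0);u_0 \circ h) = a_l(x;u_0) \tm \{0\}$.

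Combining the two identifications, saturation of $\tilde f$ at $((x,0),u_0(0))$ is equivalent to $a_l(x;u_0) = a(x,u_0(0))$. Corollary \ref{cor_extendedcs} supplies a dense set $\Omega \subset \CC^{\infty}(\R,V)$ on which this equality holds at every $x \in S$, and the argument of the first paragraph concludes. I expect the main hurdle to be the book-keeping in these block computations: recognizing that the feedback $u_0 \circ h$ depends only on $s$ (and not on $y$) is what makes the Lie-bracket and $(\rmd/\rmd t - \tilde A)$-recursions split cleanly into $M$- and $\R$-components. Once this is in place, only the cited results of Coron and Sontag are needed.
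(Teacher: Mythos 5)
Your proof is correct and follows essentially the same route as the paper: apply Corollary \ref{cor_extendedcs} to $\Sigma^*$, identify $a^*$ and $a_l^*$ with the corresponding objects for $\Sigma$ up to a trivial $\{0\}$-factor, and invoke the controllability criterion for time-varying linear systems. The only point of departure is your expression for $\widetilde A(t)$, which includes the chain-rule contribution $B(t)u_0'(t)$ in the upper-right block where the paper writes $0$ (the paper takes the partial derivative of $\widetilde f$ with the control argument held fixed); as you correctly note, this discrepancy is immaterial because the $\R$-component of $\widetilde B(t)$ vanishes, so the recursion $(\rmd/\rmd t - \widetilde A(t))^i\widetilde B(t) = ((\rmd/\rmd t - A(t))^iB(t),0)^T$ holds regardless of the upper-right block.
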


\begin{proof}
Let $a^*((x,t),u)$ and $a_l^*((x,t);u)$ denote the corresponding subspaces of $T_{(x,t)}(M\tm\R) \cong T_xM \tm T_t\R$ for $\Sigma^*$ and note that%
\begin{equation*}
  a^*((x,t),u) = a(x,u) \tm \{0\} = T_xM \tm \{0\}%
\end{equation*}
for all $(x,t,u) \in S\tm\R\tm V$. By Corollary \ref{cor_extendedcs}, there is a dense set in $\CC^{\infty}(\R,V)$ of functions $u_0$ with%
\begin{equation}\label{eq_alsfullrank}
  a_l^*((x,t);u_0 \circ h) = a^*((x,t),u_0(t)) = T_xM \tm \{0\}%
\end{equation}
on $S\tm\R$. Now consider for some $x\in S$ the smooth curve $\gamma(t) := (\varphi(t,x,u_0),t)$ in $M\tm\R$, which satisfies%
\begin{equation*}
  \dot{\gamma}(t) = (f(\gamma(t),u_0(t)),1) = \tilde{f}\left(\gamma(t),u_0 \circ h(\gamma(t))\right),\quad \gamma(0) = (x,0).%
\end{equation*}
In local coordinates, the linearization along $\gamma$ is determined by the matrices%
\begin{eqnarray*}
  \widetilde{A}(t) &=& \frac{\partial\widetilde{f}}{\partial(x,t)}(\gamma(t),u_0(t)) = \left[\begin{array}{cc}
	                                                                                        (\partial f/\partial x)(\varphi(t,x,u_0),u_0(t)) & 0 \\
					                                                                                                                                 0 & 0 \end{array}\right],\\
  \widetilde{B}(t) &=& \frac{\partial\widetilde{f}}{\partial u}(\gamma(t),u_0(t)) =  \left[\begin{array}{c}
	                                                                                        (\partial f/\partial u)(\varphi(t,x,u_0),u_0(t))  \\
					                                                                                                                                 0 \end{array}\right].%
\end{eqnarray*}
With $A(t) := (\partial f/\partial x)(\varphi(t,x,u_0),u_0(t))$ and $B(t) := (\partial f/\partial u)(\varphi(t,x,u_0),u_0(t))$ it follows from \eqref{eq_alsfullrank} and the characterization \eqref{eq_alchar} that%
\begin{equation*}
  \spn\left\{\left[\left(\frac{\rmd}{\rmd t} - A(t)\right)^iB(t)\right]_{t=0}w;\ w\in\R^m,\ i\geq0\right\} = T_xM.%
\end{equation*}
This implies controllability of the linearization along the controlled trajectory $(\varphi(\cdot,x,u_0),u_0(\cdot))$ on every time interval containing $t=0$.%
\end{proof}

\subsection{The Main Result}\label{subsec_gub}%

The next step necessary to generalize Theorem \ref{thm_upperboundcs} is the derivation of another expression for the sum of the positive Lyapunov exponents in the periodic case. This is done in the following proposition.%

\begin{proposition}\label{prop_perposlyapsum}
If $(\varphi(\cdot,x,u),u(\cdot))$ is a periodic controlled trajectory and $\lambda_1(u,x) \geq \cdots \geq \lambda_k(u,x) > 0$ are the positive Lyapunov exponents at $(u,x)$ (counted several times according to their multiplicities), then%
\begin{equation*}
  \lambda_1(u,x) + \cdots + \lambda_k(u,x) = \lim_{t\rightarrow\infty}\frac{1}{t}\log^+\left\|(\rmd\varphi_{t,u})_x^{\wedge}\right\|,%
\end{equation*}
where $(\rmd\varphi_{t,u})_x^{\wedge}:T_x^{\wedge}M \rightarrow T_{\varphi(t,x,u)}^{\wedge}M$ denotes the induced linear operator between the full exterior algebras of $T_xM$ and $T_{\varphi(t,x,u)}M$.%
\end{proposition}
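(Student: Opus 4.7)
Let $\tau_*$ denote the period of the trajectory, set $P := (\rmd\varphi_{\tau_*,u})_x$, and let $\mu_1,\ldots,\mu_d$ be the eigenvalues of $P$ (counted with algebraic multiplicity), ordered so that $|\mu_1|\geq\cdots\geq|\mu_d|$. By the hypothesis of the proposition, the Lyapunov exponents are $\lambda_i = (1/\tau_*)\log|\mu_i|$. The plan is (i) to reduce the continuous-time limit to a discrete-time limit along integer multiples of $\tau_*$, (ii) to split the exterior algebra operator into its restrictions to the homogeneous pieces $\Lambda^p T_xM$, and (iii) to identify the growth rate on each piece via Gelfand's spectral radius formula.

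For step (i), write $t = n\tau_* + s$ with $s\in[0,\tau_*)$. Periodicity gives $\theta_{n\tau_*}u = u$ and $\varphi(n\tau_*,x,u) = x$, so the chain rule yields
\begin{equation*}
  (\rmd\varphi_{t,u})_x^{\wedge} \;=\; (\rmd\varphi_{s,u})_x^{\wedge}\circ(P^n)^{\wedge}.
\end{equation*}
Since $\varphi_{s,u}$ is a diffeomorphism and $s$ varies in the compact interval $[0,\tau_*]$, the factor $(\rmd\varphi_{s,u})_x^{\wedge}$ and its inverse are uniformly bounded, giving constants $C_{\pm}\geq 1$ with $C_-^{-1}\|(P^n)^{\wedge}\|\leq\|(\rmd\varphi_{t,u})_x^{\wedge}\|\leq C_+\|(P^n)^{\wedge}\|$. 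Combined with $(1/t)-(1/(n\tau_*)) = O(1/n^2)$, this reduces the claim to computing $\lim_n (1/(n\tau_*))\log\|(P^n)^{\wedge}\|$. For step (ii), decomposing $T_x^{\wedge}M = \bigoplus_{p=0}^d\Lambda^pT_xM$ orthogonally gives $\|(P^n)^{\wedge}\| = \max_{0\leq p\leq d}\|\Lambda^p P^n\|$; note that $\Lambda^0 P^n = \id_{\R}$, so $\|(P^n)^{\wedge}\|\geq 1$, which justifies the $\log^+$ and shows the limit is automatically nonnegative.

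For step (iii), use the identity $\Lambda^p(P^n) = (\Lambda^p P)^n$ and Gelfand's spectral radius formula:
\begin{equation*}
  \lim_{n\rightarrow\infty}\|\Lambda^p P^n\|^{1/n} \;=\; \lim_{n\rightarrow\infty}\|(\Lambda^p P)^n\|^{1/n} \;=\; r(\Lambda^p P).
\end{equation*}
By the classical formula (verified by upper-triangularizing $P$ over the complex numbers), the eigenvalues of $\Lambda^p P$ are the products $\mu_{i_1}\cdots\mu_{i_p}$ with $i_1<\cdots<i_p$; the one of maximum modulus is $\mu_1\cdots\mu_p$, so $r(\Lambda^p P) = |\mu_1|\cdots|\mu_p|$. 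Interchanging the finite maximum with the limit then yields
\begin{equation*}
  \lim_{n\rightarrow\infty}\frac{1}{n\tau_*}\log\|(P^n)^{\wedge}\| \;=\; \max_{0\leq p\leq d}\frac{1}{\tau_*}\sum_{i=1}^{p}\log|\mu_i| \;=\; \max_{0\leq p\leq d}\sum_{i=1}^{p}\lambda_i \;=\; \lambda_1+\cdots+\lambda_k,
\end{equation*}
the last equality because $\lambda_1\geq\cdots\geq\lambda_d$ and the partial sums are maximized by including exactly the positive exponents. Combined with step (i), this gives the stated formula.

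The proof has no genuinely hard step; the only technical nuisance will be the careful bookkeeping in step (i), ensuring that the bounded multiplicative discrepancy from $(\rmd\varphi_{s,u})_x^{\wedge}$ and the difference between $1/t$ and $1/(n\tau_*)$ become negligible after division by $t\rightarrow\infty$. Both points follow at once from $\log\|(P^n)^{\wedge}\| = O(n)$ and the uniform bounds on $s\in[0,\tau_*]$.
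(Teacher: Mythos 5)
Your proof is correct, and the overall shape is the same as the paper's (reduce to discrete time along multiples of the period, then split the full exterior power operator into its homogeneous pieces and compute a spectral radius), but you streamline the middle step in a useful way. The paper invokes Floquet theory to produce a real logarithm $R$ with $(\rmd\varphi_{2\tau n,u})_x = \rme^{2\tau n R}$ — note the doubled period, which is needed precisely because a real matrix with negative eigenvalues has no real logarithm — and then computes $\rho(\rme^{2\tau R_j})$ as a product of exponentials of real parts of eigenvalues of $R$. You bypass this entirely by applying Gelfand's formula $\|(\Lambda^p P)^n\|^{1/n}\to r(\Lambda^p P)$ directly to the monodromy operator $P$ and reading off $r(\Lambda^p P)=|\mu_1|\cdots|\mu_p|$ from the eigenvalues of $P$. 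That is cleaner and avoids the period-doubling bookkeeping. Your reduction from continuous $t$ to $n\tau_*$ via the uniform bound on $(\rmd\varphi_{s,u})_x^{\wedge}$ over $s\in[0,\tau_*]$ is also simpler than the paper's, which uses subadditivity of the cocycle $\alpha_t$ together with a general linear growth bound $\alpha_t\leq Ct$; the paper proves that more general bound because it is reused later (assumption (a) of Proposition \ref{prop_secondapprox}), not because it is needed for this proposition. One small notational difference: including $\Lambda^0$ in the exterior algebra makes the operator norm automatically $\geq 1$, so $\log^+$ becomes redundant in your version; the paper's $\max_{1\leq j\leq d}$ suggests it omits degree zero, which is why it keeps the $\log^+$ — but the final answer $\max_{0\leq p\leq d}\sum_{i\leq p}\lambda_i=\lambda_1+\cdots+\lambda_k$ is the same either way.
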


\begin{proof}
We prove the proposition in two steps.%

\emph{Step 1.} The function defined by%
\begin{equation}\label{eq_defcocycle}
  \alpha_t(u,x) := \log^+\left\|(\rmd\varphi_{t,u})_x^{\wedge}\right\|,\quad \alpha:\R\tm(\UC\tm M)\rightarrow\R,%
\end{equation}
is a subadditive cocycle over the control flow on $\UC \tm M$, i.e.,%
\begin{equation*}
  \alpha_{t+s}(u,x) \leq \alpha_t(u,x) + \alpha_s(\phi_t(u,x)),\quad \forall t,s\in\R,\ (u,x)\in\UC\tm M.%
\end{equation*}
This is an easy consequence of the cocycle property of $\varphi$ together with the chain rule and the subadditivity of operator norms. We remark that here neither continuity of the control flow nor of the cocycle $\alpha$ is required. We want to prove the following. For every compact set $K\subset M$ there is a constant $C\geq0$ so that $\varphi(\R_+,x,u)\subset K$ implies%
\begin{equation*}
  \alpha_t(u,x) \leq Ct,\quad \forall t\geq0.%
\end{equation*}
To this end, note that with the operator norm induced by the standard norms on the exterior algebras,%
\begin{equation*}
  \left\|(\rmd\varphi_{t,u})_x^{\wedge}\right\| = \max_{1\leq j\leq d}\sigma_1(t,u,x)\cdot \ldots \cdot \sigma_j(t,u,x),%
\end{equation*}
where $\sigma_1(t,u,x) \geq \cdots \geq \sigma_d(t,u,x)$ are the singular values of $(\rmd\varphi_{t,u})_x:T_xM \rightarrow T_{\varphi(t,x,u)}M$. Hence, we can estimate $\alpha_t(u,x)$ by%
\begin{eqnarray*}
  \alpha_t(u,x) &\leq& \log^+\sigma_1(t,u,x)^d = d\log^+\left\|(\rmd\varphi_{t,u})_x\right\|\\
	&\leq& d\max\left\{0,\int_0^t \lambda_{\max}(S\nabla F_{u(s)}(\varphi(s,x,u))) \rmd s\right\}\\
	&\leq& \max\left\{0,\left[d \max_{(v,z) \in U \tm K}\lambda_{\max}(S\nabla F_v(z))\right]\right\} t =: Ct.% 
\end{eqnarray*}
Here we use that the greatest singular value of an operator is equal to the operator norm, and we write $\lambda_{\max}(S\nabla F_u(\cdot))$ for the maximal eigenvalue of the symmetrized covariant derivative (using Wazweski's inequality, cf.~Boichenko, Leonov and Reitmann \cite{BLR}). This completes the first step.%

\emph{Step 2.} Let $\tau$ denote the period of $(\varphi(\cdot,x,u),u(\cdot))$. By the fundamental lemma of Floquet theory there exists a linear operator $R:T_xM \rightarrow T_xM$ such that%
\begin{equation*}
  (\rmd\varphi_{2\tau n,u})_x = \rme^{2\tau n R},\quad \forall n\in\Z.%
\end{equation*}
Hence, the Lyapunov exponents are the real parts of the eigenvalues of $R$. Writing $t>0$ as $t = 2\tau n(t) + r(t)$ with $n(t) \in \N_0$ and $r(t) \in [0,2\tau)$, we obtain%
\begin{equation*}
   \alpha_t(u,x) \leq \alpha_{2\tau n(t)}(u,x) + \alpha_{r(t)}(u,x),%
\end{equation*}
implying%
\begin{equation*}
  \limsup_{t\rightarrow\infty}\frac{1}{t}\alpha_t(u,x) \leq \limsup_{t\rightarrow\infty}\frac{1}{t}\alpha_{2\tau n(t)}(u,x) = \frac{1}{2\tau}\limsup_{\N\ni n\rightarrow\infty}\frac{1}{n}\alpha_{2\tau n}(u,x).%
\end{equation*}
Here we use that the periodic orbit $\varphi(\R,x,u)$ is compact and hence, by Step 1, $\alpha_{r(t)}(u,x)$ is bounded by a constant independent of $t$. Similarly, one shows%
\begin{equation*}
  \frac{1}{2\tau}\liminf_{\N\ni n\rightarrow\infty}\frac{1}{n}\alpha_{2\tau n}(u,x) \leq \liminf_{t\rightarrow\infty}\frac{1}{t}\alpha_t(u,x).%
\end{equation*}
We can write%
\begin{equation*}
  \left\|(\rmd\varphi_{2\tau n,u})_x^{\wedge}\right\| = \max_{1\leq j\leq d}\left\|(\rme^{2\tau nR})^{\wedge j}\right\| = \max_{1\leq j\leq d}\left\|(\rme^{2\tau R_j})^n\right\|,%
\end{equation*}
where $R_j$ is the $j$-th derivation operator induced by $R$. Writing $\RT(\lambda_1) \geq \cdots \geq \RT(\lambda_d)$ for the real parts of the eigenvalues of $R$ and $\rho(\cdot)$ for the spectral radius of an operator, we find%
\begin{eqnarray*}
  \frac{1}{n}\alpha_{2\tau n}(u,x) &=& \log^+\left(\max_{1\leq j\leq d}\left\|(\rme^{2\tau R_j})^n\right\|^{1/n}\right)\allowdisplaybreaks\\
	                                 &\xrightarrow{n\rightarrow\infty}& \max\left\{0,\max_{1\leq j\leq d} \log \rho\left(\rme^{2\tau R_j}\right)\right\}\allowdisplaybreaks\\
							     								 &=& \max\left\{0,\max_{1\leq j\leq d} \log \prod_{i=1}^j \rme^{2\tau\RT(\lambda_i)}\right\}\allowdisplaybreaks\\
										  		   			 &=& \max\left\{0,\log\prod_{i:\ \RT(\lambda_i)>0}\rme^{2\tau\RT(\lambda_i)}\right\}\allowdisplaybreaks\\
											   				   &=& 2\tau \sum_{i:\ \RT(\lambda_i)>0}\RT(\lambda_i).%
\end{eqnarray*}
Since the real parts of the $\lambda_i$ are the Lyapunov exponents, it follows that%
\begin{equation*}
  \limsup_{t\rightarrow\infty}\frac{1}{t}\alpha_t(u,x) \leq \lambda_1(u,x)+\cdots+\lambda_k(u,x) \leq \liminf_{t\rightarrow\infty}\frac{1}{t}\alpha_t(u,x)%
\end{equation*}
and the proof is complete.%
\end{proof}

In the rest of this subsection, we assume that $D$ is a control set with nonempty interior and compact closure such that%
\begin{equation}\label{eq_regass}
  F \mbox{ is of class } \CC^{\infty} \mbox{ and } a(x,u) = T_xM \mbox{ for all } (x,u) \in \inner D \tm \inner U.%  
\end{equation}

\begin{lemma}\label{lem_exregpertraj}
For every $x\in\inner D$ there exists a regular periodic controlled trajectory $(\varphi(\cdot,x,u),u(\cdot))$ with $u\in\inner\UC$.%
\end{lemma}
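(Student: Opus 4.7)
My plan is a three-stage construction: first produce a closed orbit through $x$ with some admissible control (not yet constrained to $\inner\UC$), then perturb it to a smooth, saturating control with values in a precompact subset of $\inner U$, and finally use the openness of the endpoint map coming from regularity to exactly close the loop while staying inside $\inner\UC$.

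\textbf{Step 1: A closed orbit through $x$.} Assumption \eqref{eq_regass} together with the inclusion \eqref{eq_14} forces $\LC_0(x)=T_xM$ for all $x\in\inner D$, so local accessibility holds on $\inner D$. Combined with the approximate controllability property of the control set $D$, this yields in the standard way that $x\in\inner\OC^+(x)\cap\inner\OC^-(x)$ and, in particular, that there exist $\tau_1>0$ and $u_1\in\UC$ with $\varphi(\tau_1,x,u_1)=x$ and $\varphi([0,\tau_1],x,u_1)\subset\cl D$.

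\textbf{Step 2: Smooth saturating approximation in the interior of $\UC$.} Fix an open set $V\subset\R^m$ with $\cl V\subset\inner U$, so that every measurable $V$-valued control lies in $\inner\UC$ by Lemma \ref{lem_intuchar}. Apply Corollary \ref{cor_regtrajex} with this $V$ and $S=\inner D$: the set $\Omega$ of $u_0\in\CC^\infty(\R,V)$ for which every trajectory starting in $\inner D$ is regular on every $[0,\tau]$ is dense in $\CC^\infty(\R,V)$. Since $V$-valued smooth controls are dense in the $V$-valued measurable controls in the $L^\infty$-weak$^*$ topology, and since $u_1$ can itself be approximated in that topology by $V$-valued controls (after truncating its values to $\cl V$ while preserving the endpoint approximately, using continuity of the transition map in the weak$^*$ topology), we obtain $u_0\in\Omega$ for which $\varphi(\tau_1,x,u_0)$ is arbitrarily close to $x$, $u_0\in\inner\UC$, and the controlled trajectory $(\varphi(\cdot,x,u_0),u_0(\cdot))$ is regular on $[0,\tau_1]$.

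\textbf{Step 3: Exact closure via openness and periodic extension.} Regularity on $[0,\tau_1]$ means that the linearization of $\Sigma$ along $(\varphi(\cdot,x,u_0),u_0(\cdot))$ is controllable on $[0,\tau_1]$. By the standard implicit-function-theorem argument in control theory, this implies that the endpoint map $v\mapsto\varphi(\tau_1,x,v)$ is open at $u_0$ when the domain carries the $L^\infty$-topology. Hence, provided the error $\varrho(\varphi(\tau_1,x,u_0),x)$ from Step 2 was chosen small enough, there exists $u_*$ close to $u_0$ in $L^\infty$ with $\varphi(\tau_1,x,u_*)=x$; by choosing the $L^\infty$-neighborhood inside the set of controls with values in $\cl V$, we ensure $u_*\in\inner\UC$. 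Because controllability of the linearization is an open condition under $L^\infty$-perturbations (its Gramian depends continuously on $A(\cdot),B(\cdot)$), the perturbed trajectory remains regular on $[0,\tau_1]$. Finally, extending $u_*|_{[0,\tau_1)}$ periodically with period $\tau_1$ gives a control in $\inner\UC$ whose trajectory through $x$ is periodic and regular, as required.

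\textbf{Main obstacle.} The delicate point is reconciling the two small perturbations in Steps 2 and 3 with the constraint $u\in\inner\UC$: the smoothing in Step 2 and the endpoint-correction in Step 3 both move the control in $L^\infty$, yet the final control must stay within a compact subset of $\inner U$. Fixing the precompact open set $V$ with $\cl V\subset\inner U$ at the outset, and absorbing both perturbations into the family of $\cl V$-valued controls, is what makes the argument work. A secondary technicality is the standard fact, invoked in Step 1, that interior points of a control set under local accessibility lie on closed orbits — this is where the assumption $a(x,u)=T_xM$ on $\inner D\tm\inner U$ is genuinely needed.
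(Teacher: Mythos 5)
Your approach differs structurally from the paper's and has a genuine gap in Step 2.

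The paper proceeds in the opposite order: it does \emph{not} start from a closed orbit. It applies Corollary \ref{cor_regtrajex} directly to obtain a universally regular $u_1\in\CC^\infty(\R,\inner U)$, picks $\tau_1$ small enough that $\varphi([0,\tau_1],x,u_1)\subset\inner D$, and then uses \emph{exact} controllability — which follows from local accessibility on $\inner D$ together with approximate controllability on $D$ — to produce a piecewise constant $u_2$ with values in $\inner U$ steering $\varphi(\tau_1,x,u_1)$ back to $x$. Concatenating and extending periodically, regularity on a subinterval propagates to the whole period by Proposition \ref{prop_regularity}, and Lemma \ref{lem_intuchar} gives $u\in\inner\UC$ because $u_1([0,\tau_1])\cup u_2([0,\tau_2])$ is a compact subset of $\inner U$. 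There is no approximation step anywhere.

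The gap in your Step 2: the closed-loop control $u_1$ from Step 1 may take values on $\partial U$. You propose to ``truncate its values to $\cl V$ while preserving the endpoint approximately, using continuity of the transition map in the weak$^*$ topology.'' This does not work for two reasons. First, truncation to $\cl V$ is a pointwise modification that is \emph{not} small in weak$^*$ (or any topology controlling the endpoint); you give no bound on how far $\varphi(\tau_1,x,\cdot)$ drifts, so there is no guarantee your Step 3 perturbation budget is adequate. Second, and more fundamentally, for the general system $\Sigma$ of Section \ref{sec_gub} — which is \emph{not} assumed control-affine — the paper explicitly remarks that the control flow need not be continuous w.r.t.\ the weak$^*$-topology on $\UC$. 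So even a genuine weak$^*$-approximation of $u_1$ would not give endpoint proximity. Your Step 3 (IFT/openness of the endpoint map from controllability of the linearization) is sound, but it is carrying a load it cannot discharge because Step 2 does not deliver a control whose endpoint is provably close to $x$. The paper avoids all of this by closing the loop exactly with a piecewise constant $\inner U$-valued control rather than by perturbation.
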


\begin{proof}
From Corollary \ref{cor_regtrajex} the existence of a smooth control function $u_1\in\CC^{\infty}(\R,\inner U)$ follows such that $(\varphi(\cdot,x,u_1),u_1(\cdot))$ is regular on an interval of the form $[0,\tau_1]$, where $\tau_1$ is chosen small enough that $\varphi([0,\tau_1],x,u_1) \subset \inner D$. Since \eqref{eq_regass} implies local accessibility on $\inner D$ and approximate controllability holds on $D$, we find a piecewise constant control function $u_2$ with values in $\inner U$ such that $\varphi(\tau_2,\varphi(\tau_1,x,u_1),u_2) = x$. The appropriate concatenation of $u_1$ and $u_2$ yields a periodic control function $u$ of period $\tau := \tau_1 + \tau_2$ such that the corresponding trajectory $\varphi(\cdot,x,u)$ is $\tau$-periodic as well. From Proposition \ref{prop_regularity} it follows that $(\varphi(\cdot,x,u),u(\cdot))$ is regular on $[0,\tau]$ and Lemma \ref{lem_intuchar} yields $u\in\inner\UC$.%
\end{proof}

The following approximation result allows us to get rid of the regularity assumption in Theorem \ref{thm_upperboundcs}.%

\begin{proposition}\label{prop_firstapprox}
Let $(\varphi(\cdot,x,u),u(\cdot))$ be a $\tau$-periodic controlled trajectory with $(u,x) \in \inner\UC \tm \inner D$. Moreover, let $\beta:\R\tm(\UC\tm M) \rightarrow \R_+$ be a nonnegative subadditive cocycle over the control flow such that for all $T>0$, $y\in M$ and $u_1,u_2\in\UC$ it holds that%
\begin{equation}\label{eq_cocycleass}
  u_1(t) = u_2(t) \mbox{ a.e.~on } [0,T] \quad\Rightarrow\quad \beta_T(u_1,y) = \beta_T(u_2,y).%
\end{equation}
Then for every $\ep>0$ there exists a regular periodic controlled trajectory $(\varphi(\cdot,x,u_*),u_*(\cdot))$ with $u_*\in\inner\UC$ and period $\tau_*>0$ so that%
\begin{equation*}
  \frac{1}{\tau_*}\beta_{\tau_*}(u_*,x) \leq \frac{1}{\tau}\beta_{\tau}(u,x) + \ep.%
\end{equation*}
\end{proposition}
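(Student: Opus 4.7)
The plan is to concatenate the given periodic trajectory, iterated $N$ times, with a short regular periodic loop at $x$, and show that as $N\to\infty$ the time average of $\beta$ tends to $\beta_\tau(u,x)/\tau$ plus a small perturbation that is killed by dividing by $N\tau$.

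First, I would invoke Lemma \ref{lem_exregpertraj} to produce, starting from $x\in\inner D$, a regular $\tilde\tau$-periodic controlled trajectory $(\varphi(\cdot,x,\tilde u),\tilde u(\cdot))$ with $\tilde u\in\inner\UC$. Then for each $N\in\N$, I define the control $u_N$ on $[0,N\tau+\tilde\tau]$ by letting it agree with $u$ on $[0,N\tau]$ and with $\tilde u(\cdot - N\tau)$ on $[N\tau,N\tau+\tilde\tau]$, and extend it periodically with period $\tau_N := N\tau + \tilde\tau$. Since $\varphi(N\tau,x,u)=x$ (by $\tau$-periodicity of the original trajectory) and $\varphi(\tilde\tau,x,\tilde u)=x$, the trajectory $\varphi(\cdot,x,u_N)$ is indeed $\tau_N$-periodic. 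Lemma \ref{lem_intuchar} applied to $u$ and to $\tilde u$ furnishes a common compact subset of $\inner U$ containing almost all values of both, so $u_N\in\inner\UC$. Regularity of $(\varphi(\cdot,x,u_N),u_N(\cdot))$ on $[0,\tau_N]$ follows from Proposition \ref{prop_regularity}: on the subinterval $[N\tau,N\tau+\tilde\tau]$ this controlled trajectory is a time translate of the regular controlled trajectory $(\varphi(\cdot,x,\tilde u),\tilde u(\cdot))$, hence is regular there.

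Next I would estimate $\beta_{\tau_N}(u_N,x)$. By subadditivity,
\begin{equation*}
\beta_{\tau_N}(u_N,x) \leq \beta_{N\tau}(u_N,x) + \beta_{\tilde\tau}(\phi_{N\tau}(u_N,x)).
\end{equation*}
Because $u_N$ agrees with $u$ on $[0,N\tau]$, hypothesis \eqref{eq_cocycleass} gives $\beta_{N\tau}(u_N,x) = \beta_{N\tau}(u,x)$; because $\theta_{N\tau}u_N$ agrees with $\tilde u$ on $[0,\tilde\tau]$ and $\phi_{N\tau}(u_N,x)=(\theta_{N\tau}u_N,x)$, the same hypothesis gives $\beta_{\tilde\tau}(\phi_{N\tau}(u_N,x)) = \beta_{\tilde\tau}(\tilde u,x)$. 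Iterating subadditivity $N$ times along the original periodic orbit and using that $\theta_{k\tau}u=u$ for each $k$ (by $\tau$-periodicity of $u$) together with \eqref{eq_cocycleass} yields $\beta_{N\tau}(u,x)\leq N\beta_\tau(u,x)$. Combining,
\begin{equation*}
\frac{1}{\tau_N}\beta_{\tau_N}(u_N,x) \leq \frac{N\beta_\tau(u,x) + \beta_{\tilde\tau}(\tilde u,x)}{N\tau + \tilde\tau} \xrightarrow{N\to\infty} \frac{\beta_\tau(u,x)}{\tau},
\end{equation*}
so choosing $N$ large enough and setting $u_*:=u_N$, $\tau_*:=\tau_N$ gives the required estimate.

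The main obstacle is not really technical but conceptual: one has to recognize that the invariance condition \eqref{eq_cocycleass} (dependence of $\beta_T$ on $u$ only through its restriction to $[0,T]$) is exactly what allows us to replace $\beta_{N\tau}(u_N,x)$ by $\beta_{N\tau}(u,x)$ after a concatenation, and to equate $\beta_\tau(\theta_{k\tau}u,x)$ with $\beta_\tau(u,x)$, so that subadditivity along a periodic orbit telescopes cleanly into the linear bound $N\beta_\tau(u,x)$. A minor care point is confirming that the concatenated control still lies in $\inner\UC$, which is handled via Lemma \ref{lem_intuchar}, and that the regularity of the short inserted loop transfers to the whole period via Proposition \ref{prop_regularity}.
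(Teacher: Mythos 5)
Your proof is correct and follows essentially the same route as the paper: produce a regular $\tilde\tau$-periodic loop at $x$ from Lemma~\ref{lem_exregpertraj}, concatenate $N$ copies of the given period with that loop, use Lemma~\ref{lem_intuchar} and Proposition~\ref{prop_regularity} for admissibility and regularity, and then use subadditivity together with \eqref{eq_cocycleass} to reduce $\beta_{\tau_N}(u_N,x)$ to $N\beta_\tau(u,x)+\beta_{\tilde\tau}(\tilde u,x)$ before taking $N$ large. The only cosmetic difference is that you pass through the intermediate identity $\beta_{N\tau}(u_N,x)=\beta_{N\tau}(u,x)$ and then telescope, whereas the paper telescopes directly on $u_N$; the content is identical.
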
 

\begin{proof}
For the given periodic trajectory we construct a family of approximating trajectories. By Lemma \ref{lem_exregpertraj} there exists a regular periodic controlled trajectory $(\varphi(\cdot,x,v),v(\cdot))$ with $v\in\inner\UC$ and period $\rho>0$. For every $N\in\N$ we define%
\begin{equation*}
  u_N(t) := \left\{\begin{array}{rl}
	                      u(t) & \mbox{for } t\in[0,N\tau)\\
												v(t-N\tau) & \mbox{for } t\in[N\tau,N\tau+\rho]%
									 \end{array}\right.,%
\end{equation*}
and we extend $u_N$ $(N\tau+\rho)$-periodically. By construction and Lemma \ref{lem_intuchar}, $u_N$ is an admissible control function in $\inner\UC$. Moreover, by Proposition \ref{prop_regularity}, $u_N$ is regular on $[0,N\tau+\rho]$. Using subadditivity of $\beta$ and \eqref{eq_cocycleass}, we find%
\begin{equation*}
  \beta_{N\tau+\rho}(u_N,x) \leq N\beta_{\tau}(u,x) + \beta_{\rho}(v,x).%
\end{equation*}
Hence, for given $\ep>0$ we can choose $N$ sufficiently large so that%
\begin{eqnarray*}
   \frac{1}{N\tau+\rho}\beta_{N\tau+\rho}(u_N,x) &\leq& \frac{N}{N\tau+\rho}\beta_{\tau}(u,x) + \frac{1}{N\tau+\rho}\beta_{\rho}(v,x)\\
	                                           &\leq& \frac{1}{\tau + \rho/N}\beta_{\tau}(u,x) + \ep \leq \frac{1}{\tau}\beta_{\tau}(u,x) + \ep.%
\end{eqnarray*}
The assertion follows with $u_* = u_N$ and $\tau_* = N\tau+\rho$.%
\end{proof}

Applying this approximation result to the subadditive cocycle defined in \eqref{eq_defcocycle} yields the following estimate.%

\begin{proposition}\label{prop_perest}
For every compact set $K\subset D$ we have%
\begin{equation*}
  h_{\inv}(K,D) \leq \inf_{(u,x)}\lim_{t\rightarrow\infty}\frac{1}{t}\log^+\left\|(\rmd\varphi_{t,u})_x^{\wedge}\right\|,% 
\end{equation*}
where the infimum runs over all $\phi$-periodic points $(u,x) \in \inner\UC\tm\inner D$.%
\end{proposition}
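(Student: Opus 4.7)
The plan is to combine the three ingredients just established: Theorem \ref{thm_upperboundcs} (entropy bound for \emph{regular} periodic trajectories), Proposition \ref{prop_perposlyapsum} (identification of the positive-Lyapunov sum with the asymptotic growth rate of the wedge derivative), and Proposition \ref{prop_firstapprox} (approximation by regular periodic trajectories). The cocycle to feed into Proposition \ref{prop_firstapprox} is of course $\alpha_t(u,x) := \log^+\|(\rmd\varphi_{t,u})_x^{\wedge}\|$ from \eqref{eq_defcocycle}; it is nonnegative and subadditive by Step~1 of Proposition \ref{prop_perposlyapsum}, and since $\varphi(s,\cdot,u)$ on $[0,s]$ depends on $u$ only through $u|_{[0,s]}$, the same is true of $(\rmd\varphi_{s,u})_x$ and hence of $\alpha_s(u,x)$, which is precisely the hypothesis \eqref{eq_cocycleass}. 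Thus $\alpha$ is an admissible choice of $\beta$ in Proposition \ref{prop_firstapprox}.

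Given a $\phi$-periodic pair $(u,x) \in \inner\UC \tm \inner D$ of period $\tau>0$, I would set $L(u,x) := \lim_{t\rightarrow\infty}\frac{1}{t}\alpha_t(u,x)$, whose existence follows from Step~2 of Proposition \ref{prop_perposlyapsum}. The key observation is that $(u,x)$ is $(n\tau)$-periodic for every $n\in\N$, and the sequence $n\mapsto \alpha_{n\tau}(u,x)$ is subadditive (combining subadditivity of $\alpha$ with $\phi_{n\tau}(u,x)=(u,x)$), so $\tfrac{1}{n\tau}\alpha_{n\tau}(u,x)\to L(u,x)$. For a prescribed $\ep>0$ I would first choose $n$ large enough that $\tfrac{1}{n\tau}\alpha_{n\tau}(u,x) < L(u,x)+\ep/2$, and then invoke Proposition \ref{prop_firstapprox} at the period $n\tau$ to obtain a regular periodic controlled trajectory $(\varphi(\cdot,x,u_*),u_*(\cdot))$ with $u_*\in\inner\UC$ and some period $\tau_*>0$ satisfying
\[
\frac{1}{\tau_*}\alpha_{\tau_*}(u_*,x) \;\leq\; \frac{1}{n\tau}\alpha_{n\tau}(u,x) + \frac{\ep}{2} \;<\; L(u,x)+\ep.
\]

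For the regular periodic trajectory $(u_*,x)$, Theorem \ref{thm_upperboundcs} together with Proposition \ref{prop_perposlyapsum} yields
\[
h_{\inv}(K,D) \;\leq\; \sum_{\lambda}\max\{0,d_\lambda\lambda\} \;=\; \lim_{t\rightarrow\infty}\frac{1}{t}\alpha_t(u_*,x),
\]
and this limit is in turn bounded by $\tfrac{1}{\tau_*}\alpha_{\tau_*}(u_*,x)$ via the same subadditivity argument applied along the $\tau_*$-periodic orbit of $(u_*,x)$. Chaining these estimates gives $h_{\inv}(K,D) < L(u,x)+\ep$; letting $\ep\downarrow 0$ and then taking the infimum over all $\phi$-periodic $(u,x)\in\inner\UC\tm\inner D$ completes the argument.

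The only real obstacle I anticipate is the mismatch between the finite-period output $\tfrac{1}{\tau}\alpha_\tau(u,x)$ produced by Proposition \ref{prop_firstapprox} and the asymptotic target $L(u,x)$ in the statement. This is exactly what forces me to apply the approximation at the enlarged period $n\tau$ rather than at $\tau$ itself, a maneuver that is legitimate because Proposition \ref{prop_firstapprox} imposes no minimality requirement on the period of $(u,x)$.
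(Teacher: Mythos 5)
Your proof is correct and follows essentially the same route as the paper's: verify that $\alpha_t(u,x)=\log^+\|(\rmd\varphi_{t,u})_x^{\wedge}\|$ satisfies the hypotheses of Proposition \ref{prop_firstapprox}, use Proposition \ref{prop_perposlyapsum} to guarantee the limit $L(u,x)$ exists, choose a large integer multiple $n\tau$ of the period so that $\tfrac{1}{n\tau}\alpha_{n\tau}(u,x)$ is $\ep/2$-close to $L(u,x)$, feed the $n\tau$-periodic trajectory into Proposition \ref{prop_firstapprox}, and then chain Theorem \ref{thm_upperboundcs}, Proposition \ref{prop_perposlyapsum}, and Fekete's subadditivity lemma for the output $(u_*,x)$. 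The only cosmetic difference is that the paper uses the existence of the full limit $\sigma=\lim_{t\to\infty}\tfrac{1}{t}\alpha_t(u,x)$ directly to pick $n_0$, whereas you phrase the convergence of $\tfrac{1}{n\tau}\alpha_{n\tau}(u,x)$ to $L(u,x)$ as a consequence of subadditivity (strictly, subadditivity alone gives convergence to the infimum; identifying that limit with $L(u,x)$ uses the already-established existence of the full limit); this does not affect correctness.
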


\begin{proof}
Let $(\varphi(\cdot,x,u),u(\cdot))$ be a $\tau$-periodic controlled trajectory with $(u,x)\in\inner\UC\tm\inner D$ and consider the subadditive cocycle $\alpha_t(u,x) = \log^+\|(\rmd\varphi_{t,u})_x^{\wedge}\|$. By Proposition \ref{prop_perposlyapsum} we know that%
\begin{equation*}
  \sigma := \lim_{t\rightarrow\infty}\frac{1}{t}\alpha_t(u,x)%
\end{equation*}
exists. Hence, we can choose $n_0\in\N$ large enough so that%
\begin{equation}\label{eq_alimit}
  \left|\sigma - \frac{1}{n_0\tau}\alpha_{n_0\tau}(u,x)\right| \leq \frac{\ep}{2}.%
\end{equation}
Since $\alpha$ is nonnegative and satisfies assumption \eqref{eq_cocycleass} of Proposition \ref{prop_firstapprox}, there exists a regular $\tau_*$-periodic trajectory $(\varphi(\cdot,x,u_*),u_*(\cdot))$ with%
\begin{equation}\label{eq_aapprox}
  \frac{1}{\tau_*}\alpha_{\tau_*}(u_*,x) \leq \frac{1}{n_0\tau}\alpha_{n_0\tau}(u,x) + \frac{\ep}{2}.% 
\end{equation}
The sequence $n \mapsto \alpha_{n\tau_*}(u_*,x)$ is easily seen to be subadditive, which (using Fekete's subadditivity lemma) implies%
\begin{eqnarray*}
  \lim_{n\rightarrow\infty}\frac{1}{n\tau_*}\alpha_{n\tau_*}(u_*,x) &=& \inf_{n\in\N}\frac{1}{n\tau_*}\alpha_{n\tau_*}(u_*,x)\\
	                           &\leq& \frac{1}{\tau_*}\alpha_{\tau_*}(u_*,x) \stackrel{\eqref{eq_aapprox}}{\leq} \frac{1}{n_0\tau}\alpha_{n_0\tau}(u,x) + \frac{\ep}{2}.%
\end{eqnarray*}
Now Theorem \ref{thm_upperboundcs} together with Proposition \ref{prop_perposlyapsum} gives%
\begin{eqnarray*}
   h_{\inv}(K,D) &\leq& \lim_{t\rightarrow\infty}\frac{1}{t}\alpha_t(u_*,x) = \lim_{n\rightarrow\infty}\frac{1}{n\tau_*}\alpha_{n\tau_*}(u_*,x)\\
                 &\leq& \frac{1}{n_0\tau}\alpha_{n_0\tau}(u,x) + \frac{\ep}{2} \stackrel{\eqref{eq_alimit}}{\leq} \sigma + \ep.%
\end{eqnarray*}
Since $\ep$ can be chosen arbitrarily small, this completes the proof.%
\end{proof}

The next approximation result enables us to drop the periodicity assumption in Theorem \ref{thm_upperboundcs}.%

\begin{proposition}\label{prop_secondapprox}
Let $\beta:\R\tm(\UC\tm M) \rightarrow \R_+$ be a nonnegative subadditive cocycle over the control flow satisfying the following assumptions.%
\begin{enumerate}
\item[(a)] For every compact set $K\subset M$ there is a constant $C = C(K)\geq0$ so that $\varphi([0,\tau],x,u)\subset K$ implies $\beta_t(u,x) \leq Ct$, $t\in[0,\tau]$.%
\item[(b)] For all $T>0$, $y\in M$ and $u_1,u_2\in\UC$ it holds that%
\begin{equation*}
  u_1(t) = u_2(t) \mbox{ a.e.~on } [0,T] \quad\Rightarrow\quad \beta_T(u_1,y) = \beta_T(u_2,y).%
\end{equation*}
\end{enumerate}
Let $(u,x) \in \inner\UC \tm \inner D$ such that $\varphi(t,x,u)$ is contained in a compact set $K\subset \inner D$ for all $t\geq0$. Then for every $\ep>0$ there exists a $\tau_*$-periodic controlled trajectory $(\varphi(\cdot,x,u_*),u_*(\cdot))$, $u_*\in\inner\UC$, for some $\tau_*>0$ with%
\begin{equation*}
  \frac{1}{\tau_*}\beta_{\tau_*}(u_*,x) \leq \limsup_{t\rightarrow\infty}\frac{1}{t}\beta_t(u,x) + \ep.%
\end{equation*}
\end{proposition}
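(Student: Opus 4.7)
The plan is to turn the (non-periodic) trajectory $(u,x)$ into a periodic one by cutting it off at some large time $T$ and appending a short return trip from $y := \varphi(T,x,u)$ back to $x$. Concretely, I would choose $T>0$ large so that $(1/T)\beta_T(u,x) < \limsup_{t\to\infty}(1/t)\beta_t(u,x) + \ep/2$, find a control $w \in \inner\UC$ and return time $s>0$ with $\varphi(s,y,w)=x$, define
\begin{equation*}
  u_*(t) := u(t) \text{ for } t \in [0,T), \qquad u_*(t) := w(t-T) \text{ for } t \in [T,T+s],
\end{equation*}
and extend $(T+s)$-periodically; the resulting $\varphi(\cdot,x,u_*)$ is then $\tau_*$-periodic with $\tau_* := T+s$.

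To produce the return trip I would invoke the standing assumption \eqref{eq_regass}: it yields local accessibility on $\inner D$, which together with approximate controllability on $D$ gives \emph{exact} controllability in $\inner D$. Thus for each $y \in \inner D$ there is a piecewise constant control with values in $\inner U$ (hence in $\inner\UC$ by Lemma \ref{lem_intuchar}) steering $y$ to $x$ in some finite time. The main subtlety is that one needs a uniform control of the return time and of the compact set containing the return trajectory, independent of $T$. For this I would argue that on the compact set $K \subset \inner D$ containing the forward orbit, the minimal return time $y \mapsto s(y)$ is upper semicontinuous (by openness of the time-bounded reachable sets under local accessibility), hence bounded above by some $S>0$ on $K$; a finite-cover argument simultaneously produces a single compact set $\tilde K \subset M$ containing all the associated return trajectories, and a single compact subset of $\inner U$ containing the essential ranges of all the corresponding controls. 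This last point is what ensures $u_* \in \inner\UC$ via Lemma \ref{lem_intuchar}.

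Once the return trip is in place, the estimate is routine. Using the cocycle inequality together with hypothesis (b) (since $u_*$ agrees a.e.\ with $u$ on $[0,T]$ and $\theta_T u_*$ agrees a.e.\ with $w$ on $[0,s]$), and then hypothesis (a) applied on $\tilde K$ with constant $C := C(\tilde K)$,
\begin{equation*}
  \beta_{\tau_*}(u_*,x) \leq \beta_T(u_*,x) + \beta_s(\phi_T(u_*,x)) = \beta_T(u,x) + \beta_s(w,y) \leq \beta_T(u,x) + Cs.
\end{equation*}
Dividing by $\tau_* = T+s \geq T$ and using $s \leq S$ gives
\begin{equation*}
  \frac{\beta_{\tau_*}(u_*,x)}{\tau_*} \leq \frac{\beta_T(u,x)}{T} + \frac{CS}{T+S},
\end{equation*}
and taking $T$ large enough forces both summands to lie below $\limsup_{t\to\infty}(1/t)\beta_t(u,x) + \ep/2$ and $\ep/2$, respectively.

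I expect the main obstacle to be the uniform-return-time argument: one has to combine exact controllability in $\inner D$ with a compactness/openness argument to make $s$ and $C$ uniform, and to keep the return controls inside a fixed compact subset of $\inner U$ so that the periodic extension lands in $\inner\UC$. Everything else — the cocycle manipulation, the use of (a) and (b), and the appeal to Lemma \ref{lem_intuchar} — is then a direct computation.
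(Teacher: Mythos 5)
Your construction is correct and follows the paper's proof essentially step for step: cut the trajectory off at a large time, steer back to $x$ with a short piecewise-constant control taking values in $\inner U$, extend periodically, and estimate via the cocycle property together with hypotheses (a) and (b). Two small remarks. The uniform return-time bound you sketch is exactly the cited finiteness of the first hitting time $\tau := \inf\left\{t\geq0:\ x\in\OC^+_{\leq t}(z) \mbox{ for all } z \in K\right\}$ (see \cite[Lem.~3.2.21]{CKl} or \cite[Prop.~1.23]{Ka2}), so there is no need to re-derive it via your semicontinuity-plus-covering argument. And you do not need to manufacture a compact set $\tilde K$ from the return trajectories: since the return segment starts at $\varphi(T,x,u)\in K\subset D$ and ends at $x\in D$, the no-return property of the control set forces the whole return trajectory into $D\subset\cl D$, which is already compact by assumption; so you can simply use $C=C(\cl D)$ in (a). Likewise, taking the return control piecewise constant with values in $\inner U$ means the range of $u_*$ on one period is the union of a compact subset of $\inner U$ (from $u\in\inner\UC$) with finitely many interior points, so $u_*\in\inner\UC$ is immediate from Lemma~\ref{lem_intuchar} without needing a uniform compact set for the essential ranges.
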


\begin{proof}
Let $(t_n)_{n\in\N}$ be a sequence of positive times with $t_n\rightarrow\infty$ such that%
\begin{equation*}
  \sigma := \limsup_{t\rightarrow\infty}\frac{1}{t}\beta_t(u,x) = \lim_{n\rightarrow\infty}\frac{1}{t_n}\beta_{t_n}(u,x).%
\end{equation*}
Define the first hitting time%
\begin{equation*}
  \tau := \inf\left\{t\geq0:\ x\in\OC^+_{\leq t}(z) \mbox{ for all } z \in K\right\}.%
\end{equation*}
By a general fact we have $\tau<\infty$ (cf.~\cite[Lem.~3.2.21]{CKl} or \cite[Prop.~1.23]{Ka2}). There is $n_1\in\N$ such that for all $n\geq n_1$ and $T\in[0,\tau]$,%
\begin{equation}\label{eq_522}
  \frac{1}{t_n+T}\sup_{(t,z,v)\in[0,\tau]\tm K\tm\UC\atop \varphi([0,\tau],z,v)\subset\cl D}\beta_t(v,z) \leq \frac{C\tau}{t_n+T} \leq \frac{\ep}{2},%
\end{equation}
where $C = C(\cl D)$. Finally, there is $N\geq n_1$ with%
\begin{equation}\label{eq_524}
  \left|\frac{1}{t_N}\beta_{t_N}(u,x) - \sigma\right| \leq \frac{\ep}{2}.%
\end{equation}
By definition of $\tau$ we can choose a control $v:[0,T] \rightarrow U$ with $T\leq\tau$ and $\varphi(T,\varphi(t_N,x,u),v) = x$, and we may assume that $v$ is piecewise constant taking values in $\inner U$. Let $\tau_* := t_N + T$, define $u_*$ on $[0,\tau_*]$ as%
\begin{equation*}
  u_*(t) := \left\{\begin{array}{rl}
	                    u(t) & \mbox{for } t\in[0,t_N],\\
										v(t-t_N) & \mbox{for } t\in(t_N,\tau_*]
									 \end{array}\right.,%
\end{equation*}
and extend $u_*$ $\tau_*$-periodically. Then $(\varphi(\cdot,x,u_*),u_*(\cdot))$ is a $\tau_*$-periodic controlled trajectory with $(u_*,x)\in\inner\UC\tm\inner D$. We obtain%
\begin{eqnarray*}
  \frac{1}{\tau_*}\beta_{\tau_*}(u_*,x) &\leq& \frac{1}{t_N+T}(\beta_{t_N}(u_*,x) + \beta_T(\Theta_{t_N}u_*,\varphi(t_N,x,u_*)))\\
	 &=& \frac{1}{t_N+T}(\beta_{t_N}(u,x) + \beta_T(v,\varphi(t_N,x,u)))\\
	&\stackrel{\eqref{eq_522}}{\leq}& \frac{1}{t_N+T}\beta_{t_N}(u,x) + \frac{\ep}{2}\\
	&\leq& \frac{1}{t_N}\beta_{t_N}(u,x) + \frac{\ep}{2} \stackrel{\eqref{eq_524}}{\leq} \sigma + \ep,%
\end{eqnarray*}
which completes the proof.%
\end{proof}

The subadditive cocycle defined in \eqref{eq_defcocycle} satisfies the assumptions of the above proposition, which yields the main result of this section.%

\begin{theorem}\label{thm_genupperbound}
Assume that the control system $\Sigma$ satisfies the regularity assumption \eqref{eq_regass} on a control set $D$ with nonempty interior and compact closure. Then for every compact set $K\subset D$ it holds that%
\begin{equation*}
   h_{\inv}(K,D) \leq \inf_{(u,x)}\limsup_{t\rightarrow\infty}\frac{1}{t}\log^+\left\|(\rmd\varphi_{t,u})_x^{\wedge}\right\|,%
\end{equation*}
where the infimum runs over all $(u,x) \in \inner\UC\tm\inner D$ such that $\varphi(t,x,u)$ is contained in a compact subset of $\inner D$ for all $t\geq0$.%
\end{theorem}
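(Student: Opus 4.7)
The plan is to derive Theorem \ref{thm_genupperbound} essentially as a one-line combination of Propositions \ref{prop_perest} and \ref{prop_secondapprox}, applied to the subadditive cocycle $\alpha_t(u,x) = \log^+\|(\rmd\varphi_{t,u})_x^{\wedge}\|$ from \eqref{eq_defcocycle}. The first step is therefore to check that $\alpha$ satisfies the hypotheses of Proposition \ref{prop_secondapprox}: nonnegativity is built into $\log^+$; hypothesis (a), the linear-growth bound on compact sets, is precisely the content of Step 1 of the proof of Proposition \ref{prop_perposlyapsum}, obtained there via Wazewski's inequality applied to the symmetrized covariant derivative; and hypothesis (b), the dependence only on $u|_{[0,T]}$, is immediate, since $\varphi(\cdot,x,u)|_{[0,T]}$ and hence $(\rmd\varphi_{t,u})_x$ for $t\in[0,T]$ are determined by $u|_{[0,T]}$.

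Next I would fix an arbitrary $(u,x) \in \inner\UC\tm\inner D$ with $\varphi(\R_+,x,u)$ contained in a compact subset of $\inner D$, and $\ep>0$. Proposition \ref{prop_secondapprox} then produces a $\tau_*$-periodic controlled trajectory through $x$ with control $u_*\in\inner\UC$ and $\varphi(\tau_*,x,u_*)=x$, satisfying
\begin{equation*}
  \frac{1}{\tau_*}\alpha_{\tau_*}(u_*,x) \leq \limsup_{t\rightarrow\infty}\frac{1}{t}\alpha_t(u,x) + \ep.
\end{equation*}
By construction $(u_*,x)$ is a $\phi$-periodic point in $\inner\UC\tm\inner D$, so it is eligible for Proposition \ref{prop_perest}. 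Moreover, for such a periodic $(u_*,x)$ the sequence $n\mapsto\alpha_{n\tau_*}(u_*,x)$ is subadditive (the cocycle property combined with $\phi_{\tau_*}(u_*,x)=(u_*,x)$), and Fekete's lemma, together with the linear-growth bound from Step 1 of Proposition \ref{prop_perposlyapsum} used to absorb the remainder on a last interval of length less than $\tau_*$, yields $\lim_{t\rightarrow\infty} t^{-1}\alpha_t(u_*,x) \leq \tau_*^{-1}\alpha_{\tau_*}(u_*,x)$.

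Chaining these estimates gives
\begin{equation*}
  h_{\inv}(K,D) \leq \lim_{t\rightarrow\infty}\frac{1}{t}\alpha_t(u_*,x) \leq \frac{1}{\tau_*}\alpha_{\tau_*}(u_*,x) \leq \limsup_{t\rightarrow\infty}\frac{1}{t}\alpha_t(u,x) + \ep,
\end{equation*}
where the first inequality is Proposition \ref{prop_perest}. Letting $\ep\downarrow 0$ and taking the infimum over admissible $(u,x)$ yields the theorem.

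There is no real obstacle here: the substantive work has already been absorbed into the preceding propositions — the existence of regular controls via Coron's theorem (Corollary \ref{cor_regtrajex}), the identification of the Ruelle-type bound with the exterior-algebra expression (Proposition \ref{prop_perposlyapsum}), and the two successive approximation steps removing the regularity and periodicity assumptions (Propositions \ref{prop_firstapprox} and \ref{prop_secondapprox}). The only item worth double-checking is that the periodic point produced by Proposition \ref{prop_secondapprox} genuinely meets the hypotheses of Proposition \ref{prop_perest}, i.e.\ both $(u_*,x)\in\inner\UC\tm\inner D$ and $\phi_{\tau_*}(u_*,x) = (u_*,x)$, but both are explicit in its conclusion.
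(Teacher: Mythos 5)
Your proof is correct and follows essentially the same route as the paper: verify that $\alpha$ satisfies the hypotheses of Proposition~\ref{prop_secondapprox}, apply it to obtain a $\tau_*$-periodic $(u_*,x)\in\inner\UC\tm\inner D$, invoke Proposition~\ref{prop_perest}, and close the chain with the Fekete/subadditivity bound $\lim_t t^{-1}\alpha_t(u_*,x)\leq\tau_*^{-1}\alpha_{\tau_*}(u_*,x)$. The only difference is that you spell out the hypothesis-checking and the Fekete step a bit more explicitly than the paper does, which leaves both implicit.
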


\begin{proof}
We apply Proposition \ref{prop_secondapprox} to the subadditive cocycle $\alpha_t(u,x) = \log^+\|(\rmd\varphi_{t,u})_x^{\wedge}\|$. Given a trajectory $\varphi(\cdot,x,u)$ with values in a compact subset of $\inner D$ and $\ep>0$, we find a $\tau_*$-periodic controlled trajectory $(\varphi(\cdot,x,u_*),u_*(\cdot))$ with $u_*\in\inner\UC$ such that%
\begin{equation*}
  \frac{1}{\tau_*}\alpha_{\tau_*}(u_*,x) \leq \limsup_{t\rightarrow\infty}\frac{1}{t}\alpha_t(u,x) + \ep.%
\end{equation*}
Then Proposition \ref{prop_perest} yields%
\begin{eqnarray*}
  h_{\inv}(K,D) &\leq& \lim_{t\rightarrow\infty}\frac{1}{t}\alpha_t(u_*,x) = \lim_{\N\ni n\rightarrow\infty}\frac{1}{n\tau_*}\alpha_{n\tau_*}(u_*,x)\\
									 &=& \inf_{n\in\N}\frac{1}{n\tau_*}\alpha_{n\tau_*}(u_*,x) \leq \frac{1}{\tau_*}\alpha_{\tau_*}(u_*,x) \leq \limsup_{t\rightarrow\infty}\frac{1}{t}\alpha_t(u,x) + \ep,%
\end{eqnarray*}
implying the assertion.%
\end{proof}

The following corollary shows that for control-affine systems the strong jet accessibility assumption \eqref{eq_regass} can be weakened to local accessibility.%

\begin{corollary}\label{cor_regforcasys}
Assume that the system $\Sigma$ is control-affine, i.e., $F(x,u) = f_0(x) + \sum_{i=1}^m u_i f_i(x)$ with $\CC^{\infty}$-vector fields $f_0,f_1,\ldots,f_m$ and a compact and convex control range $U$ with $0\in\inner U$. Then Theorem \ref{thm_genupperbound} also holds under the assumption that the system satisfies the classical accessibility rank condition on $\inner D$, i.e., if the Lie algebra generated by $f_0,f_1,\ldots,f_m$ has full rank at every $x\in\inner D$ (instead of $a(x,u)=T_xM$ on $\inner D \tm \inner U$).%
\end{corollary}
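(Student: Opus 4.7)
The plan is to apply Theorem \ref{thm_genupperbound} after verifying the regularity assumption \eqref{eq_regass} on $\inner D \tm \inner U$. The proposition in Subsection \ref{subsec_regtraj} identifies $a(x,u)$ with $\LC_0(x)$ in the control-affine case, where $\LC_0$ is the strong accessibility algebra, i.e., the ideal generated by $f_1,\ldots,f_m$ in $\LC$. Hence it suffices to show $\LC_0(x)=T_xM$ for every $x\in\inner D$. Because the quotient $\LC/\LC_0$ is spanned by the class of $f_0$, one has $\dim\LC(x)-\dim\LC_0(x)\leq 1$ at every point; under the hypothesis $\dim\LC(x)=d$ on $\inner D$ the only case to exclude is $\dim\LC_0(x)=d-1$.

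Suppose for contradiction that $\dim\LC_0(x_0)=d-1$ for some $x_0\in\inner D$. Since $\LC_0$ is an ideal in $\LC$ and each $F_v=f_0+\sum v_i f_i$ lies in $\LC$, the flow of $F_v$ preserves $\LC_0$ as a distribution and hence its pointwise rank. Approximating admissible trajectories by piecewise constant ones, this invariance transfers to yield $\dim\LC_0(y)=d-1$ for every $y\in\OC^+(x_0)$. Approximate controllability on $D$ makes $\OC^+(x_0)$ dense in $\inner D$, so by lower semi-continuity of the rank combined with the bound $\dim\LC_0\geq d-1$ from the previous paragraph, $\LC_0$ has constant rank $d-1$ on all of $\inner D$. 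Frobenius' theorem then provides, locally near any point of $\inner D$, a codimension-one foliation to which $f_1,\ldots,f_m$ are tangent and to which $f_0$ is transverse, together with a smooth first integral $h$ satisfying $\rmd h\cdot f_i=0$ for $i\geq 1$ and $\rmd h\cdot f_0=1$. Consequently $h(\varphi(t,x,u))=h(x)+t$ on any controlled trajectory that remains in the chart of $h$.

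The contradiction then comes from approximate controllability. I would pick $y\in\inner D$ close to $x_0$ with $h(y)<h(x_0)$ and sequences $u_n\in\UC$, $\tau_n>0$ with $\varphi(\tau_n,x_0,u_n)\rightarrow y$, and verify that for $n$ large the trajectory $\varphi([0,\tau_n],x_0,u_n)$ stays in the $h$-chart; this combines the no-return property of $D$, which keeps the trajectory inside $D$, with the identity $h(\varphi(\tau_n,x_0,u_n))=h(x_0)+\tau_n$ to control excursions. Once this containment is secured, $h(\varphi(\tau_n,x_0,u_n))\geq h(x_0)>h(y)$ contradicts the convergence $\varphi(\tau_n,x_0,u_n)\rightarrow y$. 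The main obstacle is precisely this containment step, since $h$ is defined only on a local chart, and ruling out long excursions of the approximating trajectories requires either a quantitative argument exploiting no-return and the strict growth of $h$, or passage to a simply connected open neighborhood of $x_0$ on which $h$ extends globally.
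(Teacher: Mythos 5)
Your approach differs fundamentally from the paper's and contains a gap that cannot be closed. You attempt to show that the classical accessibility rank condition on $\inner D$ together with approximate controllability forces the strong accessibility rank condition $\LC_0(x)=T_xM$ on $\inner D$, so that \eqref{eq_regass} is automatically satisfied. This implication is false. Take $M=\mathbb{T}^2$, $f_0=\partial/\partial x_1$, $f_1=\partial/\partial x_2$, $U=[-1,1]$. Then $\LC=\spn\{f_0,f_1\}$ has rank $2$ everywhere, the system is completely controllable on $\mathbb{T}^2$ (so $D=\mathbb{T}^2$ is a control set with compact closure and nonempty interior), yet $\LC_0=\spn\{f_1\}$ has rank $1=d-1$ everywhere. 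The ``main obstacle'' you flag at the end is therefore not a technicality to be overcome: the Frobenius first integral $h$ (here $x_1$, locally) exists only on a chart, the trajectory from $x_0$ towards a point with smaller $h$-value genuinely leaves every such chart by wrapping around the torus, and passing to a simply connected neighborhood of $x_0$ does not help because the returning trajectory still exits that neighborhood. The no-return property gives nothing here since $D$ is the whole manifold. Thus the intended contradiction never materializes, and the corollary cannot be reduced to Theorem \ref{thm_genupperbound} by upgrading accessibility.

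The paper's proof takes an entirely different route: it introduces a one-parameter family of time-transformed systems $\Sigma^{\gamma}$, $\gamma>1$, in which an auxiliary scalar control $v(t)\in[1/\gamma,\gamma]$ multiplies the entire right-hand side. Trajectories of $\Sigma^{\gamma}$ are time reparametrizations of those of $\Sigma$, one has $h_{\inv}(K,D;\Sigma)\leq\gamma\,h_{\inv}(K,D;\Sigma^{\gamma})$, and, crucially, $\Sigma^{\gamma}$ \emph{does} satisfy the strong accessibility rank condition because the drift $f_0$ now appears among the differences $v F_u - v' F_{u'}$, so all of $f_0,f_1,\ldots,f_m$ lie in the strong accessibility algebra. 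Applying Theorem \ref{thm_genupperbound} to $\Sigma^{\gamma}$ and letting $\gamma\to1$ yields the corollary without ever needing $\LC_0$ to have full rank for the original $\Sigma$.
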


\begin{proof}
The proof is subdivided into two steps.%

\emph{Step 1.} For each $\gamma>1$ we consider the time-transformed system%
\begin{equation*}
  \Sigma^{\gamma}:\quad \dot{x}(t) = v(t) \cdot F(x(t),u(t)),\quad (v,u) \in \UC^{\gamma} = \VC^{\gamma} \tm \UC,%
\end{equation*}
where $\VC^{\gamma} := \{v\in L^{\infty}(\R,\R) : v(t) \in [1/\gamma,\gamma] \mbox{ a.e.}\}$. The trajectories of $\Sigma^{\gamma}$ are just time reparametrizations of the trajectories of $\Sigma$. To show this, for every $v\in\VC^{\gamma}$ define%
\begin{equation*}
  \sigma_v(t) := \int_0^t v(s)\rmd s,\quad t \in \R.%
\end{equation*}
It is clear that $\sigma_v$ is locally absolutely continuous with $\sigma(0)=0$. The inequality $v(s)\geq 1/\gamma>0$ implies that $\sigma_v$ is strictly increasing with $\sigma_v(t) \rightarrow \pm \infty$ for $t\rightarrow\pm\infty$. We claim that%
\begin{equation}\label{eq_reparam}
  \varphi(\sigma_v(t),x,u) \equiv \varphi^{\gamma}(t,x,(v,u \circ \sigma_v)),%
\end{equation}
where $\varphi^{\gamma}$ is the transition map associated with $\Sigma^{\gamma}$. Indeed, for almost all $t\in\R$,%
\begin{eqnarray*}
  \frac{\rmd}{\rmd t}\varphi(\sigma_v(t),x,u) &=& \dot{\sigma}_v(t) \cdot F(\varphi(\sigma_v(t),x,u),u(\sigma_v(t)))\\
	                                            &=& v(t) \cdot F(\varphi(\sigma_v(t),x,u),(u \circ \sigma_v)(t)).%
\end{eqnarray*}
Then uniqueness of solutions yields \eqref{eq_reparam}. From this identity it can easily be seen that a control set $D$ of $\Sigma$ is also a control set of $\Sigma^{\gamma}$. We claim that the invariance entropies of an admissible pair $(K,D)$ w.r.t.~$\Sigma$ and $\Sigma^{\gamma}$ satisfy%
\begin{equation}\label{eq_entropiesineq}
   h_{\inv}(K,D;\Sigma) \leq \gamma \cdot h_{\inv}(K,D;\Sigma^{\gamma}).%
\end{equation}
To prove this, let $\SC \subset \VC^{\gamma} \tm \UC$ be a $(\tau,K,D)$-spanning set for $\Sigma^{\gamma}$. Then%
\begin{equation*}
  \SC' := \left\{ u \circ \sigma_v^{-1} \ : \ (v,u) \in \SC \right\}%
\end{equation*}
is a $(\tau/\gamma,K,D)$-spanning set for $\Sigma$. Indeed, for $x\in K$ there exists $(v,u) \in \SC$ with $\varphi^{\gamma}(t,x,(v,u)) = \varphi(\sigma_v(t),x,u \circ \sigma_v^{-1}) \in D$ for $t\in[0,\tau]$. The claim follows, because $\sigma_v(\tau) = \int_0^{\tau}v(s)\rmd s \geq \tau/\gamma$. Hence, $r_{\inv}(\tau/\gamma,K,D;\Sigma) \leq r_{\inv}(\tau,K,D;\Sigma^{\gamma})$ implying \eqref{eq_entropiesineq}.%

\emph{Step 2.} We show that the time-transformed systems $\Sigma^{\gamma}$ satisfy the strong accessibility rank condition if $\Sigma$ satisfies the classical accessibility rank condition. The strong accessibility algebra of $\Sigma^{\gamma}$ contains the differences%
\begin{equation*}
  v\left(f_0 + \sum_{i=1}^m u_i f_i\right) - v'\left(f_0 + \sum_{i=1}^m u_i' f_i\right),\quad u,u' \in U,\ v,v' \in \left[\gamma^{-1},\gamma\right],%
\end{equation*}
and hence it contains the vector fields $f_1,\ldots,f_m$ as well as $f_0$ (put $(v,u) := (\gamma,0) \in \R\tm\R^m$ and $(v',u') := (1,0) \in \R\tm\R^m$). Consequently, since $\LC(f_0,f_1,\ldots,f_m)$ has full rank at every point in $\inner D$ by assumption, the same holds for the strong accessibility algebra of $\Sigma^{\gamma}$. Since the systems $\Sigma^{\gamma}$ are polynomial in $u$, we have equalities in \eqref{eq_14}. Hence, Theorem \ref{thm_genupperbound} can be applied to $\Sigma^{\gamma}$ for every $\gamma>1$. This gives%
\begin{eqnarray*}
  h_{\inv}(K,D;\Sigma^{\gamma}) &\leq& \inf_{((v,u),x)}\limsup_{t\rightarrow\infty}\frac{1}{t}\log^+\left\|(\rmd\varphi^{\gamma}_{t,u})_x^{\wedge}\right\|\\
	                                 &=& \inf_{((v,u),x)}\limsup_{t\rightarrow\infty}\frac{1}{t}\log^+\left\|(\rmd\varphi_{\sigma_v(t),u \circ \sigma_v^{-1}})_x^{\wedge}\right\|\\
															  &\leq& \gamma \inf_{(u,x)}\limsup_{s\rightarrow\infty}\frac{1}{s}\log^+\left\|(\rmd\varphi_{s,u})_x^{\wedge}\right\|.%
\end{eqnarray*}
Letting $\gamma\rightarrow 1$ in \eqref{eq_entropiesineq} then implies the assertion.%
\end{proof}

\section{Lower Bounds for Hyperbolic Sets}\label{sec_glb}%
  
In this section, we derive a lower bound for the invariance entropy of a hyperbolic controlled invariant set. The proof of this result is mainly based on an estimate of the volumes of Bowen-balls, a skew-product version of the well-known Bowen-Ruelle volume lemma. In the following subsection, we provide a detailed proof of this lemma.%

\subsection{The Volume Lemma}%

First we prove a version of the Bowen-Ruelle volume lemma for skew products and apply it to control-affine systems. This lemma can be found in Liu \cite{Liu} in a special formulation for discrete-time random hyperbolic sets. Here we will give a detailed proof in the deterministic case. We prove the lemma for discrete-time skew-products and then transfer it to the continuous-time case via discretization.%

We consider a discrete-time skew-product%
\begin{equation*}
  \phi:\Z \tm B \tm M \rightarrow B \tm M,\quad (k,(b,x)) \mapsto \phi_k(b,x) = (\theta^kb,\varphi(k,x,b)),%
\end{equation*}
where $B$ is a compact metric space and $(M,g)$ a Riemannian manifold. Moreover, we assume the following:%
\begin{enumerate}
\item[(i)] The base map $\theta:B\rightarrow B$ and the cocycle $\varphi:\Z\tm B\tm M\rightarrow M$ are continuous.%
\item[(ii)] For every $k\in\Z$ and $b\in B$, the map $\varphi_{k,b}:M\rightarrow M$, $x\mapsto\varphi(k,x,b)$, is a $\CC^2$-diffeomorphism and both its first and its second derivative depends continuously on $(b,x)$.%
\end{enumerate}
Let $Q\subset M$ be a compact set such that for every $x\in Q$ there exists $b\in B$ with $\varphi(\Z,x,b) \subset Q$. Then we define the lift of $Q$ to $B\tm M$ by%
\begin{equation*}
  \QC := \left\{(b,x)\in B\tm M\ :\ \varphi(\Z,x,b) \subset Q\right\}.%
\end{equation*}
It is easy to show that $\QC$ is a compact $\phi$-invariant set. We further assume that for every $(b,x)\in\QC$ there exists a splitting%
\begin{equation*}
  T_xM = E^-_{b,x} \oplus E^+_{b,x}%
\end{equation*}
with the following properties:%
\begin{enumerate}
\item[(a)] The subspaces $E^{\pm}_{b,x}$ are invariant, i.e., for all $(b,x)\in \QC$ and $k\in\Z$,%
\begin{equation*}
  (\rmd\varphi_{k,b})_xE^{\pm}_{b,x} = E^{\pm}_{\phi_k(b,x)}.%
\end{equation*}
\item[(b)] There are constants $c\geq1$ and $\lambda\in(0,1)$ such that%
\begin{equation*}
  \left|(\rmd\varphi_{k,b})_xv\right| \leq c\lambda^k|v|,\quad\forall k\geq0,\ (b,x)\in\QC,\ v\in E^-_{b,x},%
\end{equation*}
and%
\begin{equation*}
  \left|(\rmd\varphi_{k,b})_xv\right| \geq c^{-1}\lambda^{-k}|v|,\quad\forall k\geq0,\ (b,x)\in\QC,\ v\in E^+_{b,x}.%
\end{equation*}
\item[(c)] The subspaces $E^{\pm}_{b,x}$ vary continuously with $(b,x)$, i.e., the projections $\pi^{\pm}_{b,x}:T_xM \rightarrow E^{\pm}_{b,x}$ along $E^{\mp}_{b,x}$ depend continuously on $(b,x)$.%
\end{enumerate}
For every $b\in B$ and $n\in\N_0$ we introduce the \emph{Bowen-metric}%
\begin{equation*}
  \varrho_{b,n}(x,y) := \max_{0\leq k\leq n}\varrho(\varphi(k,x,b),\varphi(k,y,b)),%
\end{equation*}
which is a metric on $M$ topologically equivalent to $\varrho$. The open balls%
\begin{equation*}
  B^n_b(x,\ep) = \left\{y\in M:\ \varrho_{b,n}(x,y)<\ep\right\}%
\end{equation*}
are called \emph{Bowen-balls (of order $n$ and radius $\ep$)}.%

\begin{lemma}\label{lem_poly}
The following polynomials map the interval $[0,1]$ onto itself.%
\begin{eqnarray*}
  p_1(x) &=& x^n + n(1-x)x^{n-1},\\
	p_2(x) &=& x^i + \left[\frac{n}{2}\right](1-x^2)(1-x)x^{i-1} + (1-x)x^{n-i+1},%
\end{eqnarray*}
where $n\geq2$, and $[n/2] \leq i \leq n$.%
\end{lemma}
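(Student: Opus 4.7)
My strategy is to show, for each of $p_1$ and $p_2$, that the polynomial is continuous with $p_j(0) = 0$, $p_j(1) = 1$, and $p_j([0,1]) \subseteq [0,1]$; the intermediate value theorem then yields surjectivity onto $[0,1]$.

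For $p_1$, I would evaluate the endpoints directly and note that $p_1(x) = x^{n-1}\bigl(n - (n-1)x\bigr) \geq 0$ on $[0,1]$. The upper bound follows from the short computation
\[
  p_1'(x) \;=\; nx^{n-1} + n\bigl[(n-1)(1-x)x^{n-2} - x^{n-1}\bigr] \;=\; n(n-1)(1-x)x^{n-2} \;\geq\; 0
\]
on $[0,1]$, so $p_1$ is monotone nondecreasing and hence $p_1([0,1]) = [0,1]$.

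For $p_2$, I would verify $p_2(1) = 1$ at once, and $p_2(0) = 0$ under the hypothesis $i \geq 2$ (which holds automatically for $n \geq 4$, since then $[n/2] \geq 2 \leq i$). Termwise nonnegativity of the defining formula gives $p_2(x) \geq 0$. For the upper bound, I would exploit the identities $1 - x^i = (1-x)\sum_{j=0}^{i-1}x^j$ and $(1-x^2)(1-x) = (1-x)^2(1+x)$ to factor
\[
  1 - p_2(x) \;=\; (1-x)\Bigl[\,\sum_{j=0}^{i-1}x^j \;-\; [n/2](1-x^2)x^{i-1} \;-\; x^{n-i+1}\,\Bigr].
\]
Since $1-x \geq 0$ on $[0,1]$, it suffices to show the bracketed quantity is nonnegative. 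Expanding $[n/2](1-x^2)x^{i-1} = [n/2]x^{i-1} - [n/2]x^{i+1}$, the plan is to absorb the $[n/2]x^{i-1}$ contribution using the inequality $x^j \geq x^{i-1}$ for $0 \leq j \leq i-1$ and $x\in[0,1]$, which is available because $[n/2] \leq i$ supplies at least $[n/2]$ such terms in the geometric sum. The remaining piece $[n/2]x^{i+1} - x^{n-i+1}$ can then be controlled by comparing exponents, splitting on whether $n-i+1 \geq i+1$ (in which case $x^{n-i+1} \leq x^{i+1} \leq [n/2]x^{i+1}$) or $n-i+1 \leq i-1$ (in which case the leftover monomial is already present in the geometric sum and can be recovered before the absorption step).

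The main obstacle will be the bookkeeping for $p_2$: the constraint $[n/2] \leq i \leq n$ has to be used carefully to pair positive and negative monomial contributions, and a short case distinction on the parity of $n$ and the position of $i$ relative to $n/2$ appears unavoidable. In contrast, $p_1$ is essentially a one-line derivative check.
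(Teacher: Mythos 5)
Your derivative argument for $p_1$ is correct and more transparent than the paper's, which instead applies Bernoulli's inequality in the form $p_1(x) = x^n\bigl(1 + n\tfrac{1-x}{x}\bigr) \leq x^n\bigl(1 + \tfrac{1-x}{x}\bigr)^n = 1$. For $p_2$, the factorization $1 - p_2(x) = (1-x)\bigl[\sum_{j=0}^{i-1}x^j - [n/2](1-x^2)x^{i-1} - x^{n-i+1}\bigr]$ is correct and a genuinely different route from the paper, which again invokes Bernoulli after first overestimating $p_2$ by $x^i + i(1-x^2)(1-x)x^{i-1} + (1-x)x$. However, your case split is not exhaustive: it omits the middle case $n-i+1 = i$, equivalently $n = 2i-1$, which is reachable under the hypothesis $[n/2] \leq i \leq n$ (take $n$ odd and $i = (n+1)/2 = [n/2]+1$; concretely $n=3$, $i=2$). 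There the monomial $x^{n-i+1} = x^i$ is neither of degree $\geq i+1$ (so it is not dominated by $x^{i+1}$) nor of degree $\leq i-1$ (so it does not appear in the geometric sum), and both branches of your argument fail to apply. The patch is short: when $n=2i-1$ one has $[n/2] = i-1$, and after absorbing $(i-1)x^{i-1}$ with the $i-1$ terms $x^1,\ldots,x^{i-1}$ of the geometric sum, the leftover $1 + (i-1)x^{i+1} - x^i$ is $\geq 1 - x^i \geq 0$.

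A further caveat: you correctly note that $p_2(0) = 0$ needs $i \geq 2$, but you do not resolve what happens when $i=1$. For $i=1$ (which can occur only for $n\in\{2,3\}$, where $[n/2]=1$), one computes $p_2(0) = [n/2] = 1$; for $n=2$, $i=1$ one in fact has $p_2 \equiv 1$, so $p_2$ is \emph{not} onto $[0,1]$ and the lemma as worded fails in that corner. (The paper's own proof overlooks this and asserts $p_2(0)=0$ unconditionally.) The issue is harmless for the application in Step~6 of the volume lemma, where only the upper bound $p_j(x)\leq 1$ on $[0,1]$ is used, but your intermediate-value-theorem framing does rely on $p_2(0)=0$; you should either add the hypothesis $i\geq 2$ or weaken the claim to ``maps $[0,1]$ into $[0,1]$'' for the $i=1$ case.
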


\begin{proof}
Obviously, we have $p_1(0)=0$ and $p_1(x)\geq0$ for all $x\in[0,1]$. For $x>0$ we obtain by Bernoulli's inequality that%
\begin{equation*}
  p_1(x) = x^n\left(1 + n\frac{1-x}{x}\right) \leq x^n \left(1 + \frac{1-x}{x}\right)^n = 1.%
\end{equation*}
Also $p_2$ satisfies $p_2(0)=0$ and $p_2(x)\geq0$ for $x\in[0,1]$. For $n\geq2$ and $x \in (0,1)$, using Bernoulli again, we can show that%
\begin{eqnarray*}
  p_2(x) &\leq& x^i + i(1-x^2)(1-x)x^{i-1} + (1-x)x\\
	          &=& \left(x^i + (1-x)x\right)\left(1 + i\frac{(1-x^2)(1-x)x^{i-1}}{x^i + (1-x)x}\right)\\
				 &\leq& \left(x^i + (1-x)x\right)\left(1 + \frac{(1-x^2)(1-x)x^{i-1}}{x^i + (1-x)x}\right)^i\\
						&=& \left(x^i + (1-x)x\right)^{1-i}\left((1-x^2+x^3)x^{i-1} + x(1-x)\right)^i.%
\end{eqnarray*}
Now it suffices to show that both factors are between $0$ and $1$. For the first one we have $x^i + (1-x)x \leq x + (1-x)x = x(2-x) \leq 1$ and for the second one%
\begin{equation*}
  (1-x^2+x^3)x^{i-1} + x(1-x) \leq x^{i-1} + x(1-x) \leq x(2-x) \leq 1.%
\end{equation*}
The proof is finished.%
\end{proof}

\begin{vlemma}\label{lem_vlem}
Assume that the dimensions of the subspaces $E^{\pm}_{b,x}$ are constant on $\QC$ and that the constant $c$ in (b) equals $1$. Then for every sufficiently small $\ep>0$ there exists $C_{\ep}\geq1$ such that for all $(b,x)\in\QC$ and $n\geq0$,%
\begin{equation*}
  C_{\ep}^{-1} \leq \vol\left(B^n_b(x,\ep)\right)\left|\det\left(\rmd\varphi_{n,b}|_{E^+_{b,x}}:E^+_{b,x} \rightarrow E^+_{\phi_n(b,x)}\right)\right| \leq C_{\ep}.%
\end{equation*}
\end{vlemma}

\begin{proof}
The proof is subdivided into nine steps.%

\emph{Step 1.} In the first step we prove three claims.%
\begin{enumerate}
\item[(i)] There exists a constant $K\geq 1$ such that for all $(b,x)\in\QC$ and $T_xM \ni v = v^+ \oplus v^- \in E^+_{b,x} \oplus E^-_{b,x}$ it holds that%
\begin{equation}\label{eq_step1_norms}
  \frac{1}{2}|v| \leq \|v\|_{b,x} := \max\{|v^-|,|v^+|\} \leq K|v|.%
\end{equation}
This is proved as follows. We have $v^{\pm} = \pi^{\pm}_{b,x}(v)$ and the projections depend continuously on $(b,x)$. By compactness of $\QC$, the suprema $\sup_{(b,x)\in\QC}\|\pi_{b,x}^{\pm}\|$ are finite. Then \eqref{eq_step1_norms} holds for all $K\geq1$ bigger than the maximum of these two suprema. The lower estimate follows from the triangle inequality $|v|\leq |v^+| + |v^-| \leq 2\max\{|v^+|,|v^-|\}$.%
\item[(ii)] There is $r_0>0$ such that  for any $(b,x)\in\QC$ the maps%
\begin{eqnarray*}
  \widetilde{\varphi}_{b,x} &:=& \exp^{-1}_{\varphi_{1,b}(x)} \circ \varphi_{1,b} \circ \exp_x:\{v\in T_xM : |v|\leq r_0\} \rightarrow T_{\varphi_{1,b}(x)}M,\\
	\widetilde{\varphi}^-_{b,x} &:=& \exp_x^{-1} \circ \varphi^{-1}_{1,b} \circ \exp_{\varphi_{1,b}(x)}:\{w\in T_{\varphi_{1,b}(x)}M : |w|\leq r_0\} \rightarrow T_xM%
\end{eqnarray*}
are well-defined. Since $Q$ is compact, we find $r_1>0$ such that $\exp_x$ is defined on the closed $r_1$-ball around $0_x\in T_xM$ for every $x\in Q$. Since the derivative of $\varphi_{1,b}$ depends continuously on $(b,x)$, there is a common Lipschitz constant $L$ for the maps $\varphi_{1,b}$, $b\in B$, on a small compact neighborhood of $Q$. Let $s>0$ be chosen such that $\exp_y^{-1}$ is defined on the closed ball $\cl B(y,s)$ for every $y\in Q$ and let $r_2 := s/L$. With $r_0 := \min\{r_1,r_2\}$ we find that $\widetilde{\varphi}_{b,x}$ is well-defined. The same arguments apply to $\widetilde{\varphi}^-_{b,x}$.%
\item[(iii)] There is $A_0\geq1$ such that for all $(b,x)\in\QC$ it holds that%
\begin{equation}\label{eq_step1_opnorms}
  \sup_{v\in T_xM,|v|\leq r_0}\|(\rmd\widetilde{\varphi}_{b,x})_v\|,\quad \sup_{w\in T_{\varphi_{1,b}(x)}M,|w|\leq r_0}\|(\rmd\widetilde{\varphi}^-_{b,x})_w\| \leq A_0,%
\end{equation}
and%
\begin{equation}\label{eq_step1_lipconstants}
  \Lip\left((\rmd\widetilde{\varphi}_{b,x})_{(\cdot)}\right),\quad \Lip\left((\rmd\widetilde{\varphi}^-_{b,x})_{(\cdot)}\right) \leq A_0,%
\end{equation}
where the Lipschitz constants are taken w.r.t.~$|\cdot|$. To show \eqref{eq_step1_opnorms}, we use the chain rule, which, e.g., gives%
\begin{equation*}
  \left\|(\rmd\widetilde{\varphi}_{b,x})_v\right\| \leq \left\|(\rmd\exp^{-1}_{\varphi_{1,b}(x)})_{\varphi_{1,b}(\exp_x(v))}\right\| \cdot \left\|(\rmd\varphi_{1,b})_{\exp_x(v)}\right\|\cdot\left\|(\rmd\exp_x)_v\right\|.%
\end{equation*}
Since the right-hand side is continuous in $(b,x,v)$ and the subset of $TM$ consisting of all vectors $v$ with $|v|\leq r_0$ and $v\in T_xM$ for some $x\in Q$ is compact, we find that there exists a bound as claimed (analogously for $\widetilde{\varphi}^-_{b,x}$). For the proof of \eqref{eq_step1_lipconstants} we use the assumption that the second derivative of $\varphi_{1,b}$ depends continuously on $(b,x)$. This implies that the same is true for the maps $\widetilde{\varphi}_{b,x}$ and hence the first derivative is Lipschitz continuous and the Lipschitz constants (the norms of the second derivatives) are bounded on the compact set considered here (analogously for $\widetilde{\varphi}^-_{b,x}$).%
\end{enumerate}

\emph{Step 2.} In the second step we fix some constants. Let $u$ be the common dimension of the subspaces $E^+_{b,x}$ and put $A_1 := 4u(2A_0)^u$. Fix an $\ep_0>0$ with%
\begin{equation}\label{eq_step2_ep0}
  \ep_0 < \min\left\{\frac{1}{2}(1-\lambda),\frac{1}{2}(\lambda^{-1}-1)\right\} \mbox{\quad and\quad} (\lambda + \ep_0)^{-1} - \ep_0 > 1,%
\end{equation}
where the latter is equivalent to $1-\lambda > \ep_0(1+\lambda+\ep_0)$. Define%
\begin{equation*}
  \mu := (\lambda + 2\ep_0)\left[(\lambda+\ep_0)^{-1} - \ep_0\right]^{-1} < 1,\qquad \alpha := \frac{1 + \ep_0/2}{\lambda^{-1} - \ep_0/2} < 1.%
\end{equation*}
Choose $\beta \in (0,1)$ such that%
\begin{equation*}
  \beta \geq \max\{\alpha,\mu\} \mbox{\quad and \quad} 1-\beta^2 \leq \beta.%
\end{equation*}
Finally, choose $r>0$ such that%
\begin{equation}\label{eq_step2_rcond}
  r \leq \min\left\{r_0,\frac{\ep_0}{4KA_0},\frac{(1-\beta)(1-\beta^2)}{2K^2A_0A_1}\right\},%
\end{equation}
and for any $(b,x)\in\QC$%
\begin{equation}\label{eq_step2_lipconstants}
  \Lip(R_{b,x}),\quad \Lip(R^-_{b,x}) \leq \frac{\ep_0}{2K},%
\end{equation}
where $R_{b,x}$ is defined as the restriction of $\widetilde{\varphi}_{b,x} - (\rmd\widetilde{\varphi}_{b,x})_{0_x}$ to the set $\{v\in T_xM : |v|\leq r\}$, $R^-_{b,x}$ is defined analogously, and the Lipschitz constants are taken w.r.t.~the norm $|\cdot|$. Define for $(b,x)\in\QC$ and $n\geq0$%
\begin{equation*}
  D_b(x,n,r) := \left\{v\in T_xM\ :\ \left|\widetilde{\varphi}^k_{b,x}(v)\right| \leq r,\ k = 0,1,\ldots,n\right\},%
\end{equation*}
where $\widetilde{\varphi}^k_{b,x} := \widetilde{\varphi}_{\phi_{k-1}(b,x)} \circ \cdots \circ \widetilde{\varphi}_{\phi_1(b,x)} \circ \widetilde{\varphi}_{b,x}$.%

\emph{Step 3.} We claim that for all $(b,x)\in\QC$, $n\geq0$ and $v\in D_b(x,n,r)$ it holds that%
\begin{equation}\label{eq_step3_claim}
  \left|\widetilde{\varphi}^k_{b,x}(v)\right| \leq 2Kr\max\left\{\alpha^k,\alpha^{n-k}\right\}\quad \mbox{for } k = 0,1,\ldots,n.%
\end{equation}
Writing the derivative of $\widetilde{\varphi}_{b,x}$ at $0_x\in T_xM$ in the form%
\begin{equation*}
  (\rmd\widetilde{\varphi}_{b,x})_{0_x} = \left(\begin{array}{cc}
	                                                 A_{b,x} & C_{b,x} \\ D_{b,x} & B_{b,x} \end{array}\right):E^-_{b,x} \oplus E^+_{b,x} \rightarrow E^-_{\phi_1(b,x)} \oplus E^+_{\phi_1(b,x)},%
\end{equation*}
by invariance of the subbundles $E^{\pm}$, we can express $\widetilde{\varphi}_{b,x}$ as%
\begin{equation}\label{eq_step3_firstorderapprox}
  \widetilde{\varphi}_{b,x} = \left(\begin{array}{cc}
																					A_{b,x} & 0 \\ 0 & B_{b,x} \end{array}\right) + R_{b,x}(\cdot),%
\end{equation}
where the linear maps $A_{b,x}:E^-_{b,x} \rightarrow E^-_{\phi_1(b,x)}$ and $B_{b,x}:E^+_{b,x} \rightarrow E^+_{\phi_1(b,x)}$ satisfy $\|A_{b,x}\|\leq\lambda$ and $\|B^{-1}_{b,x}\|\leq\lambda$ with $R_{b,x}$ as defined in Step 2. From the definition of $K$ and \eqref{eq_step2_lipconstants} we conclude%
\begin{eqnarray*}
  \left|\widetilde{\varphi}_{b,x}(v)^+\right| &=& \left|B_{b,x}v^+ + R_{b,x}(v)^+\right| \geq \left|B_{b,x}v^+\right| - \left|R_{b,x}(v)^+\right|\\
	&\geq& \lambda^{-1}|v^+| - \frac{\ep_0}{2}|v| \geq \left(\lambda^{-1} - \frac{\ep_0}{2}\right)|v^+| - \frac{\ep_0}{2}|v^-|,%
\end{eqnarray*}
which, putting $\alpha' := (\lambda^{-1} - \ep_0/2)^{-1}$, implies%
\begin{equation}\label{eq_step3_vplusineq}
  |v^+| \leq \alpha'\left(|\widetilde{\varphi}_{b,x}(v)^+| + \frac{\ep_0}{2}|v^-|\right) \mbox{\quad for } |v|\leq r.%
\end{equation}
Analogously, one shows%
\begin{equation}\label{eq_step3_vminusineq}
  |v^-| \leq \alpha'\left(|\widetilde{\varphi}^-_{b,x}(v)^-| + \frac{\ep_0}{2}|v^+|\right) \mbox{\quad for } |v|\leq r.%
\end{equation}
Estimate \eqref{eq_step3_claim} now follows from an iterated application of \eqref{eq_step3_vplusineq} and \eqref{eq_step3_vminusineq}, where the number of iterations is $\min\{k,n-k\}$. For instance, let $n=5$, $k=2$. Then%
\begin{eqnarray*}
  \left|\widetilde{\varphi}^2_{b,x}(v)^+\right| &\leq& \alpha'\left(\left|\widetilde{\varphi}^3_{b,x}(v)^+\right| + \frac{\ep_0}{2}\left|\widetilde{\varphi}^2_{b,x}(v)^-\right|\right)\\
	&\leq& \alpha'\Bigl(\alpha'\left(\left|\widetilde{\varphi}^4_{b,x}(v)^+\right| + \frac{\ep_0}{2}\left|\widetilde{\varphi}^3_{b,x}(v)^-\right|\right)\\
	  && + \frac{\ep_0}{2}\alpha'\left(\left|\widetilde{\varphi}_{b,x}(v)^-\right| + \frac{\ep_0}{2}\left|\widetilde{\varphi}^2_{b,x}(v)^+\right|\right)\Bigr),%
\end{eqnarray*}
and analogously%
\begin{eqnarray*}
  \left|\widetilde{\varphi}^2_{b,x}(v)^-\right| &\leq& \alpha'\left(\left|\widetilde{\varphi}_{b,x}(v)^-\right| + \frac{\ep_0}{2}\left|\widetilde{\varphi}^2_{b,x}(v)^+\right|\right)\\
	&\leq& \alpha'\Bigl(\alpha'\left(\left|v^-\right| + \frac{\ep_0}{2}\left|\widetilde{\varphi}_{b,x}(v)^+\right|\right)\\
	  && + \frac{\ep_0}{2}\alpha'\left(\left|\widetilde{\varphi}_{b,x}^3(v)^+\right| + \frac{\ep_0}{2}\left|\widetilde{\varphi}^2_{b,x}(v)^-\right|\right)\Bigr).%
\end{eqnarray*}
Since we assume $v \in D_b(x,n,r)$, all the norms in these inequalities are bounded by $Kr$, implying%
\begin{equation*}
  \left|\widetilde{\varphi}^2_{b,x}(v)\right| \leq 2 Kr (\alpha')^2 \left(1 + \frac{\ep_0}{2}\right)^2 = 2Kr \alpha^2.%
\end{equation*}
For arbitrary $n$ and $k$ the proof works analogously.%

\emph{Step 4.} Let $(b,x)\in\QC$ and $v\in T_xM$ with $|v|\leq r$. Assume that $V$ is a subspace of $T_xM$ with $V \oplus E^-_{b,x} = T_xM$ and let $L_V:E^+_{b,x} \rightarrow E^-_{b,x}$ be the linear map such that $V = \Graph(L_V) = \{ v + L_Vv : v\in E^+_{b,x}\}$ (every $v^+ \in E^+_{b,x}$ can be written uniquely as $v^+ = v_1 \oplus v_2 \in E^-_{b,x} \oplus V$ and hence $L_V$ is given by $L_V(v^+) = -v_1$). Write $\theta_0(V) := \|L_V\|$ and $\theta_k(v,V) := \theta_0((\rmd\widetilde{\varphi}^k_{b,x})_vV)$, $k\geq1$, if well-defined. We claim that $\theta_0(V) \leq 1$ implies that $\theta_1(v,V)$ is well-defined with%
\begin{equation}\label{eq_step4_theta1est}
  \theta_1(v,V) \leq \mu\theta_0(V) + KA_0|v|.%
\end{equation}
To show that $\theta_1(v,V)$ is defined, we need to verify that $W:=(\rmd\widetilde{\varphi}_{b,x})_vV$ is a complement of $E^-_{\phi_1(b,x)}$. To this end, take $w\in V\backslash\{0\}$. Then%
\begin{eqnarray*}
  \left|(\rmd\widetilde{\varphi}_{b,x})_v(w)^+\right| &\geq& \left|(\rmd\widetilde{\varphi}_{b,x})_0(w)^+\right| - \left|(\rmd\widetilde{\varphi}_{b,x})_v(w)^+ - (\rmd\widetilde{\varphi}_{b,x})_0(w)^+\right|\\
                                                         &\geq& \lambda^{-1}|w^+| - KA_0|v||w|\\
                                                         &\stackrel{\eqref{eq_step2_rcond}}{\geq}& \lambda^{-1}|w^+| - KA_0 \frac{\ep_0}{4KA_0} |w^+ + L_Vw^+|\\
                                                         &\geq& \left(\lambda^{-1} - \frac{\ep_0}{2}\right)|w^+| \stackrel{\eqref{eq_step2_ep0}}{>} |w^+| > 0.%
\end{eqnarray*}
This gives $W \cap E^-_{\phi_1(b,x)} = \{0\}$ as desired. To prove \eqref{eq_step4_theta1est}, note that for every $w\in V$, by \eqref{eq_step3_firstorderapprox}, we can write%
\begin{equation*}
  (\rmd\widetilde{\varphi}_{b,x})_vw = \left(B_{b,x}w^+ + \left[(\rmd R_{b,x})_vw\right]^+\right) + \left(A_{b,x}w^- + \left[(\rmd R_{b,x})_vw\right]^-\right).%
\end{equation*}
The map $L_W$ satisfies%
\begin{equation*}
  L_W\left(B_{b,x}w^+ + \left[(\rmd R_{b,x})_vw\right]^+\right) = A_{b,x}L_V(w^+) + \left[(\rmd R_{b,x})_vw\right]^-.%
\end{equation*}
Hence, we can conclude%
\begin{equation*}
  \left|L_W\left(B_{b,x}w^+ + \left[(\rmd R_{b,x})_vw\right]^+\right)\right| \leq \lambda\theta_0(V)|w^+| + KA_0|v||w|,%
\end{equation*}
where we use $(\rmd R_{b,x})_v - (\rmd R_{b,x})_{0_x} = (\rmd\widetilde{\varphi}_{b,x})_v - (\rmd\widetilde{\varphi}_{b,x})_{0_x}$ and \eqref{eq_step1_lipconstants}. Furthermore,%
\begin{equation*}
  \left|B_{b,x}w^+ + \left[(\rmd R_{b,x})_vw\right]^+\right| \geq \lambda^{-1}|w^+| - KA_0|v||w|.%
\end{equation*}
Note that $|w| = |w^+ + L_Vw^+| \leq (1 + \theta_0(V))|w^+|$ implying%
\begin{equation*}
  \theta_1(v,V) = \|L_W\| \leq \frac{\lambda\theta_0(V) + KA_0|v|(1 + \theta_0(V))}{\lambda^{-1} - KA_0|v|(1 + \theta_0(V))}.%
\end{equation*}
Hence, to prove \eqref{eq_step4_theta1est} it suffices to show that%
\begin{equation*}
  \frac{\lambda\theta_0(V) + KA_0|v|(1 + \theta_0(V))}{\lambda^{-1} - KA_0|v|(1 + \theta_0(V))} \leq \mu\theta_0(V) + KA_0|v|.%
\end{equation*}
To this end, let $\kappa := KA_0|v|$ and $\rho := \theta_0(V)$. Then the above is equivalent to%
\begin{eqnarray*}
   \lambda\rho \!\!\!\!&+&\!\!\!\! \kappa(1+\rho) \leq (\mu\rho + \kappa)(\lambda^{-1} - \kappa(1+\rho)) \\
	  &\Leftrightarrow& \kappa(1+\rho)[1+\mu\rho+\kappa] \leq \lambda^{-1}(\mu\rho + \kappa) - \lambda\rho\\
		&\Leftrightarrow& (1+\rho)\kappa^2 + \left[(1+\rho)(1+\mu\rho) - \lambda^{-1}\right]\kappa + \rho\left(\lambda - \lambda^{-1}\mu\right) \leq 0.%
\end{eqnarray*}
A simple computation shows that for $\kappa=0$ the last inequality holds:%
\begin{equation*}
  \lambda - \lambda^{-1}\mu \leq 0 \quad\Leftrightarrow\quad 0 \leq \lambda^3\ep_0 + \lambda^2\ep_0^2 + \lambda\ep_0 + 2\lambda\ep_0 + 2\ep_0^2.%
\end{equation*}
By \eqref{eq_step2_rcond} we have $\kappa \leq KA_0r \leq \ep_0/4$. Since the left-hand side of the inequality is a quadratic polynomial in $\kappa$ whose highest-order coefficient $(1+\rho)$ is positive, it suffices to prove that the inequality holds for $\kappa = \ep_0/4$:%
\begin{equation}\label{eq_step4_kappamax}
  (1+\rho)\frac{\ep_0^2}{16} + \left[(1+\rho)(1+\mu\rho) - \lambda^{-1}\right]\frac{\ep_0}{4} + \rho\left(\lambda - \lambda^{-1}\mu\right) \leq 0.%
\end{equation}
For $\rho=0$ this is easily seen to be true. We check it for $\rho=1$:%
\begin{eqnarray*}
  \frac{\ep_0^2}{8} \!\!\!&+&\!\!\! \left[2(1+\mu) - \lambda^{-1}\right]\frac{\ep_0}{4} + \left(\lambda - \lambda^{-1}\mu\right) \leq 0\\
	&\Leftrightarrow& \frac{\ep_0^2}{8} + \frac{\ep_0}{2} - \frac{\ep_0}{4}\lambda^{-1} + \lambda  \leq \mu\left(\lambda^{-1} - \frac{\ep_0}{2}\right)\\
	&\Leftrightarrow& \lambda + \ep_0\left(\frac{\ep_0}{8} + \frac{1}{2} - \frac{1}{4}\lambda^{-1}\right) \leq \frac{\lambda + 2\ep_0}{(\lambda+\ep_0)^{-1} - \ep_0}\left(\lambda^{-1} - \frac{\ep_0}{2}\right).%
\end{eqnarray*}
Using \eqref{eq_step2_ep0} we find%
\begin{eqnarray*}
  && 16\ep_0\left(\frac{\ep_0}{8} + \frac{1}{2} - \frac{1}{4}\lambda^{-1}\right) \leq \ep_0\left[(\lambda^{-1} - 1) + 8 - 4\lambda^{-1}\right] = \ep_0(7-3\lambda^{-1}),\\
  && \Rightarrow \lambda + \ep_0\left(\frac{\ep_0}{8} + \frac{1}{2} - \frac{1}{4}\lambda^{-1}\right) \leq \lambda + \frac{\ep_0}{16}(7-3\lambda^{-1}) \leq \lambda + 2\ep_0.%
\end{eqnarray*}
Thus, it suffices to show that%
\begin{equation*}
  (\lambda+\ep_0)^{-1} - \ep_0 \leq \lambda^{-1} - \frac{\ep_0}{2}.%
\end{equation*}
This is equivalent to $1 \leq (\lambda+\ep_0)(\lambda^{-1} + \ep_0/2)$, which is obviously true. The left-hand side of \eqref{eq_step4_kappamax} is a quadratic polynomial in $\rho$ with highest-order coefficient $\mu\ep_0/4>0$. Thus, the inequality holds for all $\rho\in[0,1]$, concluding Step 4.%

\emph{Step 5.} We claim that $\theta_0(V) \leq A_1^{-1}$ and $|v|\leq r$ implies%
\begin{equation}\label{eq_step5_claim}
  \rme^{-A_1(\theta_0(V) + |v|)} \leq \left|\frac{\det(\rmd\widetilde{\varphi}_{b,x})_v|_V}{\det(\rmd\widetilde{\varphi}_{b,x})_0|_{E^+_{b,x}}}\right|
	\leq \rme^{A_1(\theta_0(V) + |v|)}.%
\end{equation}
Let $e_1,\ldots,e_u$ be an orthonormal frame of $E^+_{b,x}$ and note that $\theta_0(V) \leq A_1^{-1}$ implies $|e_i + L_Ve_i| \leq 1 + A_1^{-1} \leq 2$. Using $A_1/4 = u(2A_0)^u \geq 2^{u-1}u$, we find%
\begin{eqnarray*}
 && \left|(e_1+L_Ve_1) \wedge \cdots \wedge (e_u + L_Ve_u) - e_1 \wedge \cdots \wedge e_u\right|\allowdisplaybreaks\\
  && \leq \left|(e_1+L_Ve_1) \wedge \cdots \wedge (e_u + L_Ve_u) - (e_1 + L_Ve_1) \wedge e_2 \wedge \cdots \wedge e_u\right|\allowdisplaybreaks\\
  && \quad + \left|(e_1 + L_Ve_1) \wedge e_2 \wedge \cdots \wedge e_u - e_1 \wedge \cdots \wedge e_u\right|\allowdisplaybreaks\\
 && \leq 2 \left| (e_2+L_Ve_2)\wedge \cdots \wedge (e_u+L_Ve_u) - e_2 \wedge \cdots \wedge e_u \right| + \|L_V\|\allowdisplaybreaks\\
 && \leq 2(2 \left| (e_3+L_Ve_3)\wedge \cdots \wedge (e_u+L_Ve_u) - e_3 \wedge \cdots \wedge e_u \right| + \|L_V\|) + \|L_V\|\allowdisplaybreaks\\
 && \leq \cdots \leq u 2^{u-1}\|L_V\| \leq \frac{A_1}{4}(\theta_0(V) + |v|).% 
\end{eqnarray*}
Analogously, using \eqref{eq_step1_opnorms}, one shows%
\begin{eqnarray*}
  \Bigl|(\rmd\widetilde{\varphi}_{b,x})_v(e_1 + L_Ve_1) \wedge \!\!\!\!&\cdots&\!\!\!\! \wedge (\rmd\widetilde{\varphi}_{b,x})_v(e_u + L_Ve_u)\\
	&-& (\rmd\widetilde{\varphi}_{b,x})_v(e_1) \wedge \cdots \wedge (\rmd\widetilde{\varphi}_{b,x})_v(e_u)\Bigr|\\
	&\leq& u2^{u-1}\|L_V\|\|(\rmd\widetilde{\varphi}_{b,x})_v\|^u \leq u2^{u-1}\|L_V\|A_0^u \mbox{\quad and}\\
  \bigl|(\rmd\widetilde{\varphi}_{b,x})_v(e_1) \!\!\!\!&\wedge&\!\!\!\! \cdots \wedge (\rmd\widetilde{\varphi}_{b,x})_v(e_u)\\
	&-& (\rmd\widetilde{\varphi}_{b,x})_0(e_1) \wedge \cdots \wedge (\rmd\widetilde{\varphi}_{b,x})_0(e_u)\bigr| \leq uA_0^u|v|.%
\end{eqnarray*}
The latter is shown by induction. With $C:=(\rmd\widetilde{\varphi}_{b,x})_v$, $D:=(\rmd\widetilde{\varphi}_{b,x})_0$ we have%
\begin{eqnarray*}
  |Ce_1 \!\!\!\!\!&\wedge&\!\!\!\!\! \cdots \wedge Ce_u - De_1 \wedge \cdots \wedge De_u|\allowdisplaybreaks\\
  &&\leq |Ce_1\wedge \cdots\wedge Ce_u - Ce_1 \wedge De_2 \wedge \cdots \wedge De_u|\allowdisplaybreaks\\
  &&\quad + |Ce_1 \wedge De_2 \wedge \cdots \wedge De_u - De_1 \wedge \cdots \wedge De_u|\allowdisplaybreaks\\
  &&\leq \|C\| |Ce_2\wedge \cdots \wedge Ce_u - De_2 \wedge \cdots \wedge De_u| + \|C-D\|\|D\|^{u-1}\allowdisplaybreaks\\
  &&\stackrel{\eqref{eq_step1_opnorms},\eqref{eq_step1_lipconstants}}{\leq} A_0 (u-1)A_0^{u-1}|v| + A_0|v|\|D\|^{u-1} \leq u A_0^u|v|.%
\end{eqnarray*}
With these estimates and $uA_0^u = A_1/2^{u+2}$, we find%
\begin{eqnarray*}
  \bigl|\!\!\!\!\!\!\!\!&&\!\!\!\!\!\!\!\!(\rmd\widetilde{\varphi}_{b,x})_v(e_1 + L_Ve_1) \wedge \cdots \wedge (\rmd\widetilde{\varphi}_{b,x})_v(e_u+L_Ve_u)\\
  && - (\rmd\widetilde{\varphi}_{b,x})_0(e_1)\wedge\cdots\wedge(\rmd\widetilde{\varphi}_{b,x})_0(e_u)\bigr|\\
  &\leq& u 2^{u-1}\|L_V\|A_0^u + u A_0^u|v|\\
  &=& \frac{A_1}{2^{u+2}}\left(2^{u-1}\|L_V\| + |v|\right) \leq \frac{A_1}{4}(\theta_0(V) + |v|).%
\end{eqnarray*}
Using these estimates together with the fact that $|\det(\rmd\widetilde{\varphi}_{b,x})_0|_{E^+_{b,x}}| = |(\rmd\widetilde{\varphi}_{b,x})_0(e_1)\wedge\cdots\wedge(\rmd\widetilde{\varphi}_{b,x})_0(e_u)|>1$, we find%
\begin{eqnarray*}
  \frac{|\det(\rmd\widetilde{\varphi}_{b,x})_v|_V|}{|\det(\rmd\widetilde{\varphi}_{b,x})_0|_{E^+_{b,x}}|} &=& \frac{|(\rmd\widetilde{\varphi}_{b,x})_v(e_1+L_Ve_1) \wedge \cdots \wedge (\rmd\widetilde{\varphi}_{b,x})_v(e_u+L_Ve_u)|}{|(e_1+L_Ve_1)\wedge\cdots\wedge(e_u+L_Ve_u)|}\allowdisplaybreaks\\
 && \cdot \frac{1}{|(\rmd\widetilde{\varphi}_{b,x})_0(e_1)\wedge\cdots\wedge (\rmd\widetilde{\varphi}_{b,x})_0(e_u)|}\allowdisplaybreaks\\
 &\leq& \frac{1 + \frac{A_1}{4}(\theta_0(V) + |v|)}{1 - \frac{A_1}{4}(\theta_0(V) + |v|)}\allowdisplaybreaks\\
 &\leq& 1 + A_1(\theta_0(V)+|v|) \leq \rme^{A_1(\theta_0(V)+|v|)}.%
\end{eqnarray*}
Here we use that $(1 + x/4) \leq (1 + x)(1 - x/4)$ for all $x \in [0,2]$ and%
\begin{equation*}
  A_1(\theta_0(V) + |v|) \leq 1 + A_1r \stackrel{\eqref{eq_step2_rcond}}{\leq} \left(1 + \frac{(1-\beta)(1-\beta^2)}{2A_0K^2}\right) \leq 2.%
\end{equation*}
The proof for the lower estimate works analogously.%
	
\emph{Step 6.} We claim that there exists a constant $B\geq1$ such that for all $(b,x)\in\QC$ and $n\geq0$ it holds that%
\begin{equation}\label{eq_step6_claim}
  B^{-1} \leq \left|\frac{\det (\rmd\widetilde{\varphi}^n_{b,x})_v|_V}{\det(\rmd\widetilde{\varphi}^n_{b,x})_0|_{E^+_{b,x}}}\right| \leq B,%
\end{equation}
whenever $v \in D_b(x,n,r)$ and $\theta_0(V) \leq A_1^{-1}$. (In the following, w.l.o.g.~$n\geq2$.) First we prove inductively that for $v\in D_b(x,n,r)$ and $\theta_0(V)\leq A_1^{-1}$%
\begin{equation}\label{eq_step6_ind}
  \theta_i(v,V) \leq \left\{\begin{array}{rl}
	                             \beta^i\theta_0(V) + irA_2\beta^{i-1} & \mbox{for } 1 \leq i \leq [n/2]\\
															 \beta^i\theta_0(V) + \left[n/2\right]rA_2\beta^{i-1} + rA_3\beta^{n-i+1} & \mbox{for } [n/2] \leq i \leq n%
														\end{array}\right.,%
\end{equation}
where $A_2 = 2K^2A_0$ and $A_3 = A_2\sum_{i=0}^{\infty}\beta^{2i}$. The case $i=1$ follows from%
\begin{equation*}
  \theta_1(v,V) \stackrel{\eqref{eq_step4_theta1est}}{\leq} \mu\theta_0(V) + KA_0|v| \leq \beta\theta_0(V) + KA_0r \leq \beta\theta_0(V) + A_2r.%
\end{equation*}
For the induction step, we first show that $\theta_0(V)\leq A_1^{-1}$ and \eqref{eq_step6_ind} imply that $\theta_i(v,V) \leq A_1^{-1}$ for $1\leq i\leq n-1$. For $1 \leq i \leq [n/2]$ this is done as follows.%
\begin{eqnarray*}
  \theta_i(v,V) &\leq& \beta^iA_1^{-1} + irA_2\beta^{i-1} \stackrel{\eqref{eq_step2_rcond}}{\leq} \beta^iA_1^{-1} + i(1-\beta)\frac{1}{2K^2A_0A_1}2K^2A_0\beta^{i-1}\\
	&=& A_1^{-1}\left(\beta^i + i(1-\beta)\beta^{i-1}\right) \leq A_1^{-1}.%
\end{eqnarray*}
The last inequality follows from Lemma \ref{lem_poly}. Similarly, for $[n/2]\leq i \leq n$,%
\begin{equation*}
  \theta_i(v,V) \leq A_1^{-1}\left(\beta^i + \left[\frac{n}{2}\right](1-\beta^2)(1-\beta)\beta^{i-1} + (1-\beta)\beta^{n-i+1}\right) \leq A_1^{-1},%
\end{equation*}
which also follows from Lemma \ref{lem_poly}. Now, for the induction step note that%
\begin{eqnarray*}
  \theta_{i+1}(v,V) &=& \theta_0\left((\rmd\widetilde{\varphi}^{i+1}_{b,x})_vV\right) = \theta_0\left((\rmd\widetilde{\varphi}_{\phi_i(b,x)})_{\widetilde{\varphi}^i_{b,x}(v)}(\rmd\widetilde{\varphi}^i_{b,x})_vV\right)\\
	&=& \theta_1\left(\widetilde{\varphi}^i_{b,x}(v),(\rmd\widetilde{\varphi}^i_{b,x})_vV\right)
	\stackrel{\eqref{eq_step4_theta1est}}{\leq} \mu\theta_0\left((\rmd\widetilde{\varphi}^i_{b,x})_vV\right) + KA_0\left|\widetilde{\varphi}^i_{b,x}(v)\right|\\
	&\leq& \beta\theta_i(v,V) + KA_0\left|\widetilde{\varphi}^i_{b,x}(v)\right|.%
\end{eqnarray*}
Next we have to distinguish two cases. Let us first assume $1 \leq i \leq [n/2]$. Then the induction hypothesis together with \eqref{eq_step3_claim} gives%
\begin{eqnarray*}
  \theta_{i+1}(v,V) &\leq& \beta\left(\beta^i\theta_0(V) + irA_2\beta^{i-1}\right) + KA_0\left|\widetilde{\varphi}^i_{b,x}(v)\right|\\
	                  &\leq& \beta^{i+1}\theta_0(V) + irA_2\beta^i + 2K^2A_0r\alpha^i\\
										&\leq& \beta^{i+1}\theta_0(V) + irA_2\beta^i + rA_2\beta^i\\
										   &=& \beta^{i+1}\theta_0(V) + (i+1)rA_2\beta^i.%
\end{eqnarray*}
This is the desired estimate if $i+1 \leq [n/2]$. Otherwise, $i=[n/2]$ and thus%
\begin{eqnarray*}
  \theta_{i+1}(v,V) &\leq& \beta^{i+1}\theta_0(V) + irA_2\beta^i + rA_2\beta^i\\
	                  &\leq& \beta^{i+1}\theta_0(V) + \left[\frac{n}{2}\right]rA_2\beta^i + rA_3\beta^{n-(i+1)+1}.%
\end{eqnarray*}
Note that the last inequality is equivalent to $(1-\beta^2)\beta^{[n/2]} \leq \beta^{n-[n/2]}$ which is trivially satisfied if $n$ is even and for odd $n$ reads $1-\beta^2 \leq \beta$, which holds by the choice of $\beta$. Now assume $i>[n/2]$. In this case,%
\begin{eqnarray*}
  \theta_{i+1}(v,V) &\stackrel{\eqref{eq_step4_theta1est}}{\leq}& \beta\left(\beta^i\theta_0(V) + \left[\frac{n}{2}\right]rA_2\beta^{i-1} + rA_3\beta^{n-i+1}\right) + KA_0\left|\widetilde{\varphi}^i_{b,x}(v)\right|\\
	&\stackrel{\eqref{eq_step3_claim}}{\leq}& \beta^{i+1}\theta_0(V) + \left[\frac{n}{2}\right]rA_2\beta^i + rA_3\beta^{n-i+2} + A_2r\beta^{n-i}\\
	&=& \beta^{i+1}\theta_0(V) + \left[\frac{n}{2}\right]rA_2\beta^i + r\beta^{n-i}(A_3\beta^2 + A_2)\\
	&=& \beta^{i+1}\theta_0(V) + \left[\frac{n}{2}\right]rA_2\beta^i + rA_3\beta^{n-i}.%
\end{eqnarray*}
This finishes the proof of \eqref{eq_step6_ind}. Now we can prove \eqref{eq_step6_claim}. Using the sum formulas%
\begin{equation*}
  \sum_{i=1}^n \gamma^i = \frac{\gamma - \gamma^{n+1}}{1-\gamma},\qquad \sum_{i=1}^n i \gamma^{i-1} = \frac{n\gamma^{n+1} - (n+1)\gamma^n + 1}{(1-\gamma)^2},%
\end{equation*}
we can sum up the terms on the right-hand side of \eqref{eq_step6_ind} for $i=1,\ldots,n$. If this sum turns out to be uniformly bounded (w.r.t.~$n$), the claim follows from \eqref{eq_step5_claim} and the chain rule. Since we only need an upper bound, we may forget about the constants. For simplicity, only consider the case where $n=2k$ is even:%
\begin{eqnarray*}
  \sum_{i=1}^k(\beta^i + i\beta^{i-1}) = \frac{\beta - \beta^{k+1}}{1-\beta} + \frac{k\beta^{k+1} - (k+1)\beta^k + 1}{(1-\beta)^2}%
\end{eqnarray*}
and%
\begin{eqnarray*}
  \sum_{i=k+1}^{2k}\left(\beta^i + k\beta^{i-1} + \frac{\beta^{2k-i+1}}{1-\beta^2}\right) = \beta^{k+1}\frac{1-\beta^k}{1-\beta} + k \beta^k\frac{1-\beta^k}{1-\beta} + \frac{\beta-\beta^{k+1}}{(1-\beta)(1-\beta^2)}.%
\end{eqnarray*}
We want to show that the sum of these terms is bounded as $k$ goes to infinity. All the terms in which $k$ only appears as an exponent are bounded from below and above, since $\beta\in(0,1)$. Hence, we only have to take care of the terms in which $k$ appears as a coefficient. Summing up these terms gives%
\begin{equation*}
  \frac{k\beta^{k+1} - (k+1)\beta^k + 1}{(1-\beta)^2} + k\beta^k\frac{1-\beta^k}{1-\beta} = \frac{1-\beta^k-k\beta^{2k}+k\beta^{2k+1}}{(1-\beta)^2}.%
\end{equation*} 
We can treat the denominator as a constant and the counter as a polynomial with parameter $k$. This polynomial has value $1$ at $x=0$ and value $0$ at $x=1$. Moreover, its derivative can be written as $kx^{k-1}(x-1)(2kx^k + \sum_{i=0}^k x^i)$, from which we see that it is decreasing on $[0,1]$ and hence bounded independently of $k$. Applying the chain rule to express the determinants in \eqref{eq_step5_claim} as products now implies the existence of the constant $B$.%

\emph{Step 7.} Let $0 < \rho \leq r/2$ and define for each $(b,x)\in\QC$ the set%
\begin{equation*}
  B_{b,x}(\rho) := B^-_{b,x}(\rho) \tm B^+_{b,x}(\rho),%
\end{equation*}
where $B^{\pm}_{b,x}(\rho) = \{v\in E^{\pm}_{b,x} : |v| < \rho\}$. Then we prove the following claim. If $h:B^+_{b,x}(\rho) \rightarrow B^-_{b,x}(\rho)$ is a $\CC^1$-map with $\Lip(h) \leq A_1^{-1}$, then there is a $\CC^1$-map $k:B^+_{\phi_1(b,x)}(\rho) \rightarrow B^-_{\phi_1(b,x)}(\rho)$ with $\Lip(k) \leq A_1^{-1}$ such that%
\begin{equation}\label{eq_step7_claim}
  \left(\widetilde{\varphi}_{b,x}\Graph(h)\right) \cap B_{\phi_1(b,x)}(\rho) = \Graph(k).%
\end{equation}
To this end, consider for given $h$ the map%
\begin{equation*}
  G:B^+_{b,x}(\rho) \rightarrow E^+_{\phi_1(b,x)},\quad G(v) = B_{b,x}v + R_{b,x}(v + h(v))^+.%
\end{equation*}
We show that $G$ is injective. Take $v_1,v_2 \in B^+_{b,x}(\rho)$ with $G(v_1)=G(v_2)$. Then%
\begin{eqnarray*}
 && B_{b,x}(v_1-v_2) = \left[R_{b,x}(v_2 + h(v_2)) - R_{b,x}(v_1 + h(v_1))\right]^+,\\
&& \stackrel{\eqref{eq_step2_lipconstants}}{\Rightarrow}
  \lambda^{-1}|v_1-v_2| \leq \frac{\ep_0}{2}\left(1 + A_1^{-1}\right)|v_1-v_2| \leq \ep_0|v_1-v_2|.%
\end{eqnarray*}
However, $\lambda\ep_0 < (1/2)\lambda(1-\lambda) < 1/2$ implying $|v_1-v_2|=0$. Now, for given $v_0 \in B^+_{\phi_1(b,x)}(\rho)$ define%
\begin{equation*}
  F:B^+_{b,x}(\rho) \rightarrow E^+_{b,x},\quad F(v) := B^{-1}_{b,x}v_0 - B^{-1}_{b,x}(R_{b,x}(v+h(v))^+).%
\end{equation*}
This map actually takes values in $B^+_{b,x}(\rho)$, since $\ep_0<\lambda^{-1}-1$ implies%
\begin{equation*}
  |F(v)| \leq \|B^{-1}_{b,x}\||v_0| + \|B^{-1}_{b,x}\|K\frac{\ep_0}{2K}\underbrace{|v+h(v)|}_{\leq2\rho} \leq \lambda\rho(1+\ep_0) < \rho.%
\end{equation*}
A similar estimate shows that $F$ is a contraction with contraction constant $\lambda\ep_0<1$. Hence, there is a fixed point $v^* = F(v^*)$, which is equivalent to $G(v^*)=v_0$ (note that we can extend $F$ to the closure of $B^+_{b,x}(\rho)$ which is a complete metric space, and this extension still takes values in $B^+_{b,x}(\rho)$). Since $v_0$ was chosen arbitrarily, we have shown that the image of $G$ contains $B^+_{\phi_1(b,x)}(\rho)$. Since $G(v)$ is the $x$-coordinate of $\widetilde{\varphi}_{b,x}(v+h(v))$, we must define $k$ by%
\begin{equation*}
  k(v) := A_{b,x}h(G^{-1}(v)) + R_{b,x}(G^{-1}(v)) + h(G^{-1}(v)))^-.%
\end{equation*}
It is easy to see that $k$ is a $\CC^1$-map with values in $B^-_{\phi_1(b,x)}(\rho)$, satisfying \eqref{eq_step7_claim}. It remains to show that $\Lip(k) \leq A_1^{-1}$. To this end, note that \eqref{eq_step7_claim} implies%
\begin{equation*}
  (\rmd\widetilde{\varphi}_{b,x})_{v+h(v)}(I + (\rmd h)_v) = (\rmd G)_v + (\rmd k)_{G(v)}(\rmd G)_v,%
\end{equation*}
and $V:= (I+(\rmd h)_v)E^+_{b,x}$ is a subspace of $T_xM$ with $V \oplus E^+_{b,x} = T_xM$. Using \eqref{eq_step4_theta1est}, we find%
\begin{equation*}
  \theta_1(v+h(v),V) \leq \mu\left\|(\rmd h)_v\right\| + KA_0|v+h(v)| \leq \mu A_1^{-1} + 2KA_0\rho,%
\end{equation*}
and therefore%
\begin{equation*}
  \theta_1(v+h(v),V) = \left\|(\rmd h)_{G(v)}\right\| \leq \mu A_1^{-1} + KA_0r \stackrel{\eqref{eq_step2_rcond}}{\leq} \mu A_1^{-1} + \frac{KA_0}{A_0A_1K} = A_1^{-1}.%
\end{equation*}
This implies $\Lip(k) \leq A_1^{-1}$.%

\emph{Step 8.} Let $0<\rho\leq r/2$. We claim that there exists a constant $K_{\rho}>0$ such that for all $(b,x)\in\QC$ it holds that%
\begin{equation*}
  K_{\rho}^{-1} \leq m_{x,h}(\Graph(h)) \leq K_{\rho},%
\end{equation*}
if $h:B^+_{b,x}(\rho) \rightarrow B^-_{b,x}(\rho)$ is a $\CC^1$-map with $\Lip(h) \leq A_1^{-1}$, where $m_{x,h}$ denotes the Lebesgue measure on $\Graph(h)$ induced by its inherited Riemannian metric as a submanifold of $T_xM$ (with the Riemannian inner product $\langle\cdot,\cdot\rangle$). We have%
\begin{eqnarray*}
  m_{x,h}(\Graph(h)) &=& \int_{B^+_{b,x}(\rho)}\sqrt{\det[\id + (\rmd h)_v^* (\rmd h)_v]}\rmd v\\
	                   &=& \int_{B^+_{b,x}(\rho)}\prod_i\sqrt{1 + \lambda_i((\rmd h)_v^* (\rmd h)_v)}\rmd v\\
									&\leq& \int_{B^+_{b,x}(\rho)}\left(1 + A_1^{-2}\right)^{u/2} \rmd v = \vol(B^+_{b,x}(\rho)) \left(1 + A_1^{-2}\right)^{u/2},%
\end{eqnarray*}
where $\lambda_i((\rmd h)_v^* (\rmd h)_v)$ are the eigenvalues of $(\rmd h)_v^* (\rmd h)_v$, i.e., the singular values of $(\rmd h)_v$. Taking the maximum over all $(b,x)$ in the compact set $\QC$, we find a uniform upper bound for $m_{x,h}(\Graph(h))$. Analogously,%
\begin{eqnarray*}
  m_{x,h}(\Graph(h)) &=& \int_{B^+_{b,x}(\rho)}\prod_i \sqrt{1 + \lambda_i((\rmd h)_v^* (\rmd h)_v)}\rmd v\\
									&\geq& \int_{B^+_{b,x}(\rho)}\rmd v \geq \min_{(b,x)\in\QC}\vol(B^+_{b,x}(\rho)) > 0,%
\end{eqnarray*}
which gives a uniform lower bound.%

\emph{Step 9.} Again let $0<\rho\leq r/2$. Fix $(b,x)\in\QC$ and $n\geq0$. For a given $v_- \in B^-_{b,x}(\rho)$ define%
\begin{equation*}
  C(v_-,n,\rho) := \left\{v\in T_xM\ :\ \pi^-_{b,x}(v) = v_-,\ \left\|\widetilde{\varphi}^k_{b,x}(v)\right\|_{\phi_k(b,x)} < \rho,\ 0\leq k \leq n\right\},%
\end{equation*}
where $\|\cdot\|_{\phi_1(b,x)}$ is the maximum norm introduced in Step 1. Note that $C(v_-,n,\rho)$ is the graph of a constant function and $C(v_-,n,\rho) \subset D_b(x,n,r)$. By Step 7, $\widetilde{\varphi}^n_{b,x}C(v_-,n,\rho)$ is the graph of a $\CC^1$-map $h_{v_-,n}:B^+_{\phi_n(b,x)}(\rho) \rightarrow B^-_{\phi_n(b,x)}(\rho)$ with $\Lip(h_{v_-,n})\leq A_1^{-1}$. Hence, by Step 8 we obtain%
\begin{eqnarray*}
  K_{\rho}^{-1} &\leq& m_{\varphi(n,x,b),h_{v_-,n}}\left(\widetilde{\varphi}^n_{b,x}C(v_-,n,\rho)\right)\\
	                 &=& \int_{C(v_-,n,\rho)}\left|\det(\rmd\widetilde{\varphi}^n_{b,x})_v|_{E^+_{b,x}}\right|\rmd m_{x,h_{v_-,0}}(v) \leq K_{\rho},%
\end{eqnarray*}
where $h_{v_-,0}:B^+_{b,x}(\rho) \rightarrow B^-_{b,x}(\rho)$ is the constant map $w \mapsto v_-$. This together with \eqref{eq_step6_claim} implies%
\begin{equation*}
  (K_{\rho}B)^{-1} \leq m_{x,h_{v_-,0}}\left(C(v_-,n,\rho)\right)\left|\det(\rmd\varphi_{n,b})_x|_{E^+_{b,x}}\right| \leq K_{\rho}B.%
\end{equation*}
Therefore, writing $E_{b,x}$ for the inner product space $(T_xM,\|\cdot\|_{b,x})$ and defining%
\begin{equation*}
  N_b(x,n,\rho) := \left\{v \in E_{b,x}\ :\ \left\|\widetilde{\varphi}^k_{b,x}(v)\right\|_{\phi_n(b,x)} < \rho,\ k=0,1,\ldots,n\right\},%
\end{equation*}
it holds by Fubini's theorem that%
\begin{equation*}
  (C_{\rho}')^{-1} \leq m_x\left(N_b(x,n,\rho)\right)\left|\det(\rmd\varphi_{n,b})_x|_{E^+_{b,x}}\right| \leq C_{\rho}',%
\end{equation*}
because%
\begin{equation*}
  N_b(x,n,\rho) = \bigcup_{v_- \in B^-_{b,x}(\rho)}C(v_-,n,\rho).%
\end{equation*}
Here $m_x$ denotes the Lebesgue measure on $T_xM$ associated with the Riemannian inner product and $C_{\rho}'$ is a number depending on $\rho$. The assertion of the volume lemma now follows from the observation that for $0<\ep<r/(2K)$ one has%
\begin{equation}\label{eq_step9_bbincl}
  \exp_x\left(N_b\left(x,n,\frac{\ep}{2}\right)\right) \subset B^n_b(x,\ep) \subset \exp_x\left(N_b\left(x,n,\frac{r}{2}\right)\right)%
\end{equation}
for all $(b,x)\in\QC$ and $n\geq0$, and the volume distortion affected by $\exp_x$ is uniformly bounded on uniformly small balls over the compact set $Q$. The first inclusion in \eqref{eq_step9_bbincl} is shown as follows. Assume $\|\widetilde{\varphi}_{b,x}^k(v)\|_{\phi_k(b,x)}<\ep/2$ for $k=0,1,\ldots,n$. Then%
\begin{eqnarray*}
  \varrho\left(\varphi(k,\exp_x(v),b),\varphi(k,x,b)\right) &=& \varrho\left(\exp_{\varphi(k,x,b)}(\widetilde{\varphi}^k_{b,x}(v)),\exp_{\varphi(k,x,b)}(0)\right)\\
  &=& \left|\widetilde{\varphi}^k_{b,x}(v)\right| \leq 2\left\|\widetilde{\varphi}^k_{b,x}\right\|_{\phi_k(b,x)} < \ep.%
\end{eqnarray*}
To see the second inclusion, assume $\varrho(\varphi(k,y,b),\varphi(k,x,b))<\ep$ for $k=0,1,\ldots,n$. Then $v := \exp_x^{-1}(y)$ is defined and%
\begin{equation*}
  \left\|\widetilde{\varphi}^k_{b,x}(v)\right\|_{\phi_k(b,x)} \leq K\left|\widetilde{\varphi}^k_{b,x}(v)\right| = K\varrho(\varphi(k,y,b),\varphi(k,x,b)) < K\ep < \frac{r}{2}.%
\end{equation*}
This completes the proof.%
\end{proof}

\begin{remark}
The proof of the volume lemma is essentially modelled according to the outline given in Liu \cite{Liu}. Several details of the proof are taken from Qian and Zhang \cite{QZh} who prove a volume lemma for hyperbolic sets of non-invertible maps. Note that the assumption $c=1$ can be removed by considering an iterate $\phi^m$ for $m$ large enough instead of $\phi$.%
\end{remark}

\subsection{The Main Result}%

Now we consider the general control-affine system%
\begin{equation*}
  \Sigma^a:\quad \dot{x}(t) = f_0(x(t)) + \sum_{i=1}^m u_i(t)f_i(x(t)),\quad u\in\UC,%
\end{equation*}
where $f_0,f_1,\ldots,f_m$ are $\CC^2$-vector fields and the set of admissible control functions is given by%
\begin{equation*}
  \UC = \left\{u:\R\rightarrow\R^m\ :\ u \mbox{ measurable with } u(t)\in U \mbox{ a.e.}\right\}%
\end{equation*}
with a compact and convex control range $U\subset\R^m$. Recall that the control flow%
\begin{equation*}
  \phi:\R \tm (\UC \tm M) \rightarrow \UC \tm M,\quad (t,(u,x)) \mapsto (\theta_tu,\varphi(t,x,u))%
\end{equation*}
is a continuous skew-product flow with compact base space $\UC$. From the assumptions it follows that $\varphi$ is of class $\CC^2$ in the $x$-component and the first and second derivatives depend continuously on $(t,x,u) \in \R \tm M \tm \UC$ (cf.~\cite[Thm.~1.1]{Ka2}).%

We say that a compact set $Q\subset M$ is \emph{full-time controlled invariant} if for each $x\in Q$ there exists $u\in\UC$ with $\varphi(\R,x,u)\subset Q$. Then the set%
\begin{equation*}
  \QC := \left\{(u,x) \in \UC \tm M\ :\ \varphi(\R,x,u) \subset Q\right\},%
\end{equation*}
called the \emph{full-time lift} of $Q$, is a compact $\phi$-invariant set. We assume that the state space $M$ is endowed with a Riemannian metric.%

\begin{definition}
A compact full-time controlled invariant set $Q \subset M$ is called \emph{uniformly hyperbolic} if for each $(u,x)\in\QC$ there exists a decomposition%
\begin{equation*}
   T_xM = E^-_{u,x} \oplus E^+_{u,x}%
\end{equation*}
satisfying the following properties:%
\begin{enumerate}
\item[(H1)] $(\rmd\varphi_{t,u})_x E^{\pm}_{u,x} = E^{\pm}_{\phi_t(u,x)}$ for all $t\in\R$ and $(u,x)\in\QC$.%
\item[(H2)] There exist constants $c,\lambda>0$ such that for all $(u,x)\in\QC$ we have%
\begin{equation*}
  \left|(\rmd\varphi_{t,u})_x v\right| \leq c^{-1}\rme^{-\lambda t}|v| \mbox{\quad for all\ } t\geq0,\ v\in E^-_{u,x},%
\end{equation*}
and%
\begin{equation*}
  \left|(\rmd\varphi_{t,u})_x v\right| \geq c\rme^{\lambda t}|v| \mbox{\quad for all\ } t\geq0,\ v\in E^+_{u,x}.%
\end{equation*}
\item[(H3)] The linear subspaces $E^{\pm}_{u,x}$ vary continuously with $(u,x)$, i.e., the projections $\pi^{\pm}_{u,x}:T_xM \rightarrow E^{\pm}_{u,x}$ along $E^{\mp}_{u,x}$ depend continuously on $(u,x)$.%
\end{enumerate}
\end{definition}

As for classical hyperbolic sets (of autonomous dynamical systems), it can be shown that (H3) actually follows from (H1) and (H2). In particular, the subspaces $E^{\pm}_{u,x}$ are the fibers of subbundles $E^{\pm}\rightarrow\QC$ of the vector bundle%
\begin{equation*}
  \bigcup_{(u,x)\in\QC}\{u\}\tm T_xM \rightarrow \QC,\quad (u,v) \mapsto (u,\pi_{TM}(v)),%
\end{equation*}
with the base point projection $\pi_{TM}:TM \rightarrow M$. (cf.~\cite[Sec.~6.3]{Ka2}).%

Though formulated for discrete-time skew-product systems, the volume lemma can be applied to $\Sigma^a$ via time-discretization of the control flow. The assumption that the dimensions of the subspaces $E^{\pm}_{u,x}$ are constant over $\QC$ is automatically satisfied if $\QC$ is connected. This is the case, e.g., if $Q$ is a chain control set, because then $\QC$ is a maximal $\phi$-invariant chain transitive set. The assumption that the constant $c$ be equal to $1$ is satisfied if the time step in the discretization is chosen large enough. For the system $\Sigma^a$, the Bowen-metrics are defined by%
\begin{equation*}
  \varrho_{\tau,u}(x,y) := \max_{t\in[0,\tau]}\varrho(\varphi(t,x,u),\varphi(t,y,u)),\quad \tau>0,\ u\in\UC,%
\end{equation*}
and we denote the Bowen-balls of order $\tau>0$ by $B^{\tau}_u(x,\ep)$. Writing%
\begin{equation*}
  J^+((\rmd\varphi_{\tau,u})_x) := \left|\det(\rmd\varphi_{\tau,u})_x|_{E^+_{u,x}}:E^+_{u,x}\rightarrow E^+_{\phi_{\tau}(u,x)}\right|%
\end{equation*}
for the unstable determinant, the volume lemma reads as follows.%

\begin{lemma}
Consider the control-affine system $\Sigma^a$ and assume that $Q\subset M$ is a compact full-time controlled invariant set which is uniformly hyperbolic. If the dimensions of the subspaces $E^{\pm}_{u,x}$ are constant on $\QC$, then for every sufficiently small $\ep>0$ there is $C_{\ep}\geq1$ such that for all $\tau\geq0$ and $(u,x)\in\QC$,%
\begin{equation*}
  C_{\ep}^{-1} \leq \vol\left(B^{\tau}_u(x,\ep)\right) \cdot J^+((\rmd\varphi_{\tau,u})_x) \leq C_{\ep}.%
\end{equation*}
\end{lemma}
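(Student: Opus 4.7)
The plan is to deduce this continuous-time volume lemma from its discrete counterpart by time-discretizing the control flow. First I choose a time step $m \in \N$ large enough to guarantee two things: (1) the iterate $\phi_m$, viewed as a discrete skew-product on $\UC \tm M$ with base map $\theta_m$, satisfies the discrete hyperbolicity condition (b) with constant $\tilde c = 1$; and (2) $\CC^2$-smoothness of $\varphi$ in $x$ with continuous dependence of derivatives on $(t,x,u)$ carries over to $\varphi_{m,u}$. Requirement (1) is achieved by absorbing $c$ into the exponential: setting $\tilde\lambda := \rme^{-\lambda m/2}$, the bound $|(\rmd\varphi_{km,u})_x v| \leq c^{-1}\rme^{-\lambda km}|v| \leq \tilde\lambda^k |v|$ holds on $E^-$ for all $k \geq 0$ as soon as $m \geq (2/\lambda)\log^+(1/c)$, and symmetrically on $E^+$. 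The continuous lift $\QC$ is a compact $\phi_m$-invariant subset of $\UC\tm M$ carrying the splitting $E^- \oplus E^+$ which satisfies the discrete invariance (a), contraction/expansion (b) with $\tilde c = 1$, and continuous dependence (c). The discrete volume lemma (whose proof applies verbatim on any compact invariant set admitting such a splitting, even if it is strictly smaller than the discrete lift of $Q$) then yields a constant $C_\ep^{(1)}\geq 1$ such that
\begin{equation*}
  (C_\ep^{(1)})^{-1} \leq \vol(B^{m,n}_u(x,\ep)) \cdot J^+((\rmd\varphi_{nm,u})_x) \leq C_\ep^{(1)}
\end{equation*}
for every sufficiently small $\ep>0$, every $(u,x)\in\QC$ and every $n\in\N_0$, where $B^{m,n}_u(x,\ep) := \{y : \varrho(\varphi(km,y,u),\varphi(km,x,u))<\ep,\ k=0,\ldots,n\}$ is the discrete Bowen-ball.

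Next I pass from discrete to continuous Bowen-balls. Given $\tau \geq 0$, write $\tau = nm + s$ with $n\in\N_0$ and $s\in[0,m)$. Trivially $B^\tau_u(x,\ep) \subseteq B^{m,n}_u(x,\ep)$. For the reverse direction I fix a small compact neighborhood $W$ of $Q$ and use continuous dependence together with compactness of $[0,m]\tm W\tm\UC$ (in the weak$^*$-topology) to extract a uniform Lipschitz constant $L\geq 1$ for the maps $\varphi_{s,u}\big|_W$ with $s\in[0,m]$. Then for $\ep>0$ small enough that the relevant trajectories remain in $W$, any $y\in B^{m,n+1}_u(x,\ep/L)$ satisfies $\varrho(\varphi(km+s',y,u),\varphi(km+s',x,u)) \leq L\cdot(\ep/L) = \ep$ for every $s'\in[0,m]$ and $k=0,\ldots,n$, so that $B^{m,n+1}_u(x,\ep/L)\subseteq B^\tau_u(x,\ep)$.

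The final step is to compare the unstable determinants $J^+((\rmd\varphi_{nm,u})_x)$, $J^+((\rmd\varphi_{(n+1)m,u})_x)$ and $J^+((\rmd\varphi_{\tau,u})_x)$. Writing $(\rmd\varphi_{\tau,u})_x = (\rmd\varphi_{s,\theta_{nm}u})_{\varphi(nm,x,u)} \circ (\rmd\varphi_{nm,u})_x$ and using invariance of $E^+$ under the cocycle together with the multiplicativity of the determinant along the fibers, one gets
\begin{equation*}
  J^+((\rmd\varphi_{\tau,u})_x) = J^+((\rmd\varphi_{s,\theta_{nm}u})_{\varphi(nm,x,u)}) \cdot J^+((\rmd\varphi_{nm,u})_x).
\end{equation*}
The first factor is continuous in $(s,u,x)$ on the compact set $[0,m]\tm\QC$ and is bounded away from $0$ and $\infty$, and an identical statement holds when $nm$ is replaced by $(n+1)m$. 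Combining the sandwich $B^{m,n+1}_u(x,\ep/L) \subseteq B^\tau_u(x,\ep) \subseteq B^{m,n}_u(x,\ep)$ with the discrete estimate applied at both $n$ and $n+1$ and these uniform comparisons of determinants produces a constant $C_\ep \geq 1$, independent of $\tau$ and $(u,x)\in\QC$, satisfying the claim.

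The main obstacle is really a bookkeeping one: making the choice of $\ep$ small enough that (i) the discrete volume lemma applies at radius $\ep$, (ii) the discrete volume lemma applies at radius $\ep/L$, and (iii) the trajectories involved remain inside the neighborhood $W$ on which the Lipschitz constant $L$ is valid. Each of these conditions forces $\ep$ below some threshold determined by the hyperbolic structure and a tubular neighborhood of $Q$, but all three can be met simultaneously since there are only finitely many such thresholds. A minor subtlety is that one must verify that the continuous lift $\QC$ (and not merely the discrete lift of $Q$ under $\phi_m$) is the natural compact invariant set on which to run the discrete lemma; this is harmless since the proof of the discrete lemma only used compactness, $\phi_m$-invariance, and the hyperbolic hypotheses on the set in question.
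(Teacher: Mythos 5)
Your argument is correct and follows exactly the route the paper sketches in the remark following the discrete volume lemma: time-discretize the control flow, choose the time step $m$ large enough that the discrete hyperbolicity constant becomes $1$, and then transfer between discrete and continuous Bowen-balls using a uniform Lipschitz bound on $\varphi_{s,u}$ for $s\in[0,m]$ and multiplicativity of the unstable determinant along the flow. The paper does not actually write out this derivation, so your proposal usefully fills in the details (the sandwich $B^{m,n+1}_u(x,\ep/L)\subseteq B^\tau_u(x,\ep)\subseteq B^{m,n}_u(x,\ep)$, the determinant factorization, and the observation that the discrete lemma runs on the possibly smaller continuous lift $\QC$) in a way that is faithful to the intended argument.
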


In order to prove our main result, we use the property of additive cocycles described in the following proposition.% 

\begin{proposition}{(\cite{KSt})}\label{prop_addcoc}
Let $\Phi:\R\tm X \rightarrow X$ be a continuous flow on a Hausdorff space $X$ and $a:\R \tm X \rightarrow \R$ be a continuous additive cocycle over $\Phi$, i.e., $a(t+s,x) \equiv a(t,x) + a(s,\Phi(t,x))$. Given a compact $\Phi$-invariant set $K\subset X$,%
\begin{equation*}
  \inf_{x\in K}\limsup_{t\rightarrow\infty}\frac{1}{t}a(t,x) = \lim_{t\rightarrow\infty}\inf_{x\in K}\frac{1}{t}a(t,x).%
\end{equation*}
\end{proposition}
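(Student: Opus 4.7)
The plan is to identify both sides with $L:=\sup_{t>0}\beta(t)/t$, where $\beta(t):=\inf_{x\in K}a(t,x)$. The infimum is attained by continuity of $a(t,\cdot)$ and compactness of $K$, and the cocycle identity together with $\Phi$-invariance of $K$ yields $\beta(t+s)\geq\beta(t)+\beta(s)$; Fekete's lemma then gives $\lim_{t\to\infty}\beta(t)/t=L\in(-\infty,+\infty]$. For the easy inequality, $a(t,x)/t\geq\beta(t)/t$ implies $\limsup_{t\to\infty}a(t,x)/t\geq L$ for every $x\in K$, so $\inf_{x\in K}\limsup_{t\to\infty}a(t,x)/t\geq L$.

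For the reverse inequality, I would use a Krylov-Bogoliubov construction to obtain a $\Phi$-invariant Borel probability measure $\mu^*$ on $K$ with $c_{\mu^*}:=\int a(1,\cdot)\,d\mu^*=L$. First note that for any invariant $\mu$ the map $t\mapsto\int a(t,\cdot)\,d\mu$ is additive and continuous, hence linear, equal to $t\cdot c_\mu$ with $c_\mu:=\int a(1,\cdot)\,d\mu$; combined with $\int a(t,\cdot)\,d\mu\geq\beta(t)$ this gives $c_\mu\geq L$ for every invariant $\mu$, and in particular $L<+\infty$ (invariant measures exist by the usual Krylov-Bogoliubov argument). Now choose $x_n\in K$ with $a(n,x_n)=\beta(n)$ and set $\mu_n:=\frac{1}{n}\int_0^n\delta_{\Phi(s,x_n)}\,ds$; weak-$\ast$ compactness gives a subsequential limit $\mu^*$, which is $\Phi$-invariant by the standard estimate $|\int(f\circ\Phi_t-f)\,d\mu_n|\leq 2t\|f\|_\infty/n$ for $f\in C(K)$. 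Telescoping the cocycle identity $a(1,\Phi(s,x))=a(s+1,x)-a(s,x)$,
\begin{equation*}
\int a(1,\cdot)\,d\mu_{n_k}=\frac{1}{n_k}\left(\int_{n_k}^{n_k+1}a(s,x_{n_k})\,ds-\int_0^1 a(s,x_{n_k})\,ds\right),
\end{equation*}
and using $a(s,x_{n_k})=\beta(n_k)+a(s-n_k,\Phi(n_k,x_{n_k}))=\beta(n_k)+O(1)$ on $[n_k,n_k+1]$ together with boundedness of $a$ on $[0,1]\times K$ reduces this to $(\beta(n_k)+O(1))/n_k\to L$; hence $c_{\mu^*}=L$ by continuity of $a(1,\cdot)$ and weak-$\ast$ convergence.

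To extract a single point realizing rate $L$, apply Birkhoff's ergodic theorem to the time-one map $\Phi_1$ with invariant measure $\mu^*$. For $\mu^*$-a.e.\ $x$ the limit $\tilde g(x):=\lim_n\frac{1}{n}\sum_{k=0}^{n-1}a(1,\Phi_1^k x)$ exists and satisfies $\int\tilde g\,d\mu^*=L$, so the set $\{\tilde g\leq L\}$ has positive $\mu^*$-measure; pick any $x^*$ in it. The cocycle identity gives $\frac{1}{n}\sum_{k=0}^{n-1}a(1,\Phi_1^k x^*)=a(n,x^*)/n$, and continuity of $a$ on $[0,1]\times K$ bounds $a(t,x^*)-a(\lfloor t\rfloor,x^*)$ uniformly in $t$, so $\limsup_{t\to\infty}a(t,x^*)/t=\tilde g(x^*)\leq L$. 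Combined with the first step this completes the proof. I expect the main obstacle to be the telescoping identification $c_{\mu^*}=L$: this is where the infimum structure of $\beta(n)=a(n,x_n)$ is converted into an integral against an invariant measure, without which Birkhoff could not produce a trajectory whose asymptotic rate realizes the infimum.
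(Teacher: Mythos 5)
The paper does not prove this proposition itself; it simply cites Kawan--Stender \cite{KSt}, whose treatment works with semiflows on general Hausdorff spaces and is therefore of a purely topological flavor (chains and the Morse-spectrum machinery reviewed in Section 4.1 of this paper), avoiding measure theory entirely. Your argument is correct but takes the standard ergodic-theory route instead: you set $\beta(t)=\min_{x\in K}a(t,x)$, verify superadditivity via the cocycle identity and $\Phi$-invariance of $K$, invoke Fekete to define $L=\lim_t\beta(t)/t$, and after the easy inequality $\inf_x\limsup_t a(t,x)/t\geq L$, you produce an invariant measure $\mu^*$ with $\int a(1,\cdot)\,d\mu^*=L$ by time-averaging the Dirac measures along minimizing orbits, then use Birkhoff on the time-one map to extract a single $x^*$ with $\limsup_t a(t,x^*)/t\leq L$. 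The telescoping identity converting the discrete Birkhoff average into $a(n,x^*)/n$ is handled correctly, and the passage from integer $n$ to real $t$ uses the uniform bound on $a$ over $[0,1]\times K$. The main thing the ergodic route costs you is generality: you extract a subsequential weak-$\ast$ limit of $(\mu_n)$, which requires $K$ to be compact metrizable (so that the probability measures on $K$ are weak-$\ast$ sequentially compact). For the application in this paper ($K=\QC\subset\UC\times M$, a compact metric space) this is harmless, but for the proposition as stated—an arbitrary Hausdorff $X$—you would need to pass to subnets, which works but deserves a remark. Two minor observations: $L<\infty$ follows immediately from $|a(1,\cdot)|\leq M$ on $K$ and the cocycle decomposition $a(n,x)=\sum_{k=0}^{n-1}a(1,\Phi(k,x))$, so the detour through ``every invariant $\mu$ has $c_\mu\geq L$'' is not needed for finiteness (though it is a nice check); and continuity of $\beta$, needed for the continuous-parameter Fekete lemma, is worth stating explicitly. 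Your approach buys a short, self-contained proof using only Krylov--Bogoliubov and Birkhoff, at the expense of the topological generality that the cited reference retains.
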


\begin{proposition}\label{prop_conj}
Let $Q$ be a compact full-time controlled invariant set of the control-affine system $\Sigma^a$ with full-time lift $\QC$. Further assume that for every $u\in\UC$ there exists at most one $x = x(u) \in Q$ such that $(u,x(u))\in\QC$. Let $\UC_Q \subset \UC$ be the set of all $u\in\UC$ such that $x(u)$ exists. Then the map%
\begin{equation*}
  \sigma:\UC_Q \rightarrow \QC,\quad u \mapsto (u,x(u)),%
\end{equation*}
is a topological conjugacy between the shift flow restricted to $\UC_Q$ and the control flow restricted to $\QC$. If additionally $\UC_Q = \UC$ and $Q$ is a chain control set such that local accessibility holds on $\inner Q\neq\emptyset$, then $Q$ is the closure of a control set.%
\end{proposition}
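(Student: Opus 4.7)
The conjugacy claim is essentially a routine verification. The map $\sigma$ is well-defined by the uniqueness hypothesis, and its set-theoretic inverse is the projection $\pi_\UC:\QC\to\UC_Q$, which is trivially continuous, so $\sigma$ is a continuous bijection with continuous inverse. To check continuity of $\sigma$ itself, take $u_n\to u$ in $\UC_Q$: by compactness of $\QC$ every subsequence of $(\sigma(u_n))$ has a convergent sub-subsequence, and its limit must have first coordinate $u$ and, by uniqueness, second coordinate $x(u)$. Equivariance $\sigma\circ\theta_t=\phi_t\circ\sigma$ reduces to $\varphi(t,x(u),u)=x(\theta_tu)$, forced by uniqueness together with the $\phi$-invariance of $\QC$ which guarantees $\phi_t(u,x(u))=(\theta_tu,\varphi(t,x(u),u))\in\QC$.

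\textbf{For the second assertion,} the plan is to push dynamical richness from $(\UC,\theta)$ onto $(\QC,\phi)$ through $\sigma$ and combine it with local accessibility. Step 1: construct $u_*\in\UC$ whose $\theta$-orbit is dense in $\UC$, by concatenating longer and longer initial segments of a countable weak-$*$-dense family of piecewise-constant controls — shifts of $u_*$ then approximate each such control on arbitrarily large bounded time windows, which is all the weak-$*$-topology requires. Step 2: $\sigma$ transports this to a dense $\phi$-orbit through $(u_*,x(u_*))$ in $\QC$, whose projection $\{\varphi(t,x(u_*),u_*):t\in\R\}$ is dense in $Q$. Step 3: fix $y\in\inner Q$; forward density of the orbit immediately yields $y\in\cl\OC^+(x(u_*))$, while backward density produces $T_n\to\infty$ with $z_n:=\varphi(-T_n,x(u_*),u_*)\to y$, and local accessibility at $y$, combined with continuity of $\varphi(T_n,\cdot,u_*)$, upgrades this to $x(u_*)\in\cl\OC^+(y)$. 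Step 4: the set $D$ of points of $\inner Q$ enjoying two-way approximate controllability to $x(u_*)$, enlarged if necessary to a set maximal with respect to axioms (A) and (B), is a control set; density of the orbit in $Q$ (with local accessibility handling boundary points) forces $\cl D=Q$.

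\textbf{Main obstacle.} The crucial technical point is the backward-reachability half of Step 3: the bare convergence $z_n\to y$ is not itself a reachability statement from $y$. Local accessibility at $y$ is indispensable here, providing a reachable neighborhood $B\subset\OC^+_{\leq\tau}(y)$ into which the closeness of $z_n$ can be perturbed so that flowing by $\varphi(T_n,\cdot,u_*)$ lands arbitrarily near $x(u_*)$. Once this is in place, verifying that the resulting $D$ satisfies the control-set axioms — especially forward controlled invariance and $\cl D=Q$, which requires handling boundary points of $Q$ not reached by the orbit — demands a further careful interplay between local accessibility on $\inner Q$ and the full-time controlled invariance built into $\QC$.
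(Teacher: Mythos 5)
Your argument for the conjugacy claim is correct and essentially matches the paper's: the paper notes that $\sigma^{-1}(u,x)=u$ is the (continuous) projection and invokes the fact that a continuous bijection between compact metric spaces is a homeomorphism, while you go via subsequence extraction in the compact $\QC$ — same substance, slightly different packaging. The verification of the equivariance identity $x(\theta_t u)=\varphi(t,x(u),u)$ via uniqueness and $\phi$-invariance of $\QC$ is exactly the paper's.

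For the second assertion, however, your route diverges from the paper's and has genuine gaps. The paper gets the result in two lines: the shift flow on $\UC$ is \emph{topologically mixing} (a cited fact), the conjugacy $\sigma$ transports mixing to the control flow on $\QC$, and then a theorem of Colonius--Kliemann converts mixing plus local accessibility plus the chain-control-set structure of $Q$ directly into the conclusion. You instead try to rebuild this from scratch using only a dense orbit, and this is where the problems arise. First, the dynamical input is too weak: a dense full orbit $\{\varphi(t,x(u_*),u_*):t\in\R\}$ does \emph{not} automatically give forward density \emph{and} backward density, yet your Step 3 uses both (``forward density yields $y\in\cl\OC^+(x(u_*))$'' and ``backward density produces $T_n\to\infty$''). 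Topological mixing supplies precisely this forward/backward saturation for free; a bare dense orbit does not. Second, even granting backward density, the step from $z_n:=\varphi(-T_n,x(u_*),u_*)\to y$ to $x(u_*)\in\cl\OC^+(y)$ is not discharged by local accessibility in the way you sketch: local accessibility at $y$ gives $\inner\OC^+_{\leq\tau}(y)\neq\emptyset$, but it does not place $y$ (or $z_n$ for large $n$) in that interior, and the moduli of continuity of $\varphi(T_n,\cdot,\theta_{-T_n}u_*)$ degenerate as $T_n\to\infty$, so ``nearby points land near $x(u_*)$'' is not uniform. You flag this as the ``main obstacle'' but your proposed fix does not resolve it. Third, Step 4 is underdeveloped: maximality of $D$ and the identity $\cl D=Q$ require using that $Q$ is a \emph{chain control set} (every control set with nonempty interior meeting $Q$ is contained in $Q$, and a control set is never larger than the chain control set), and the handling of $\partial Q$ needs an argument, not an aside. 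The missing structural idea, in short, is topological mixing: it is both what makes the forward/backward density automatic and what the cited theorem is built on, and without it your direct construction does not close.
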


\begin{proof}
Note that $\UC_Q$ is a compact shift-invariant set, since it is the projection of the compact $\phi$-invariant set $\QC$ to $\UC$. The map $\sigma$ is obviously bijective with continuous inverse $\sigma^{-1}(u,x) = u$. Since both $\UC_Q$ and $\QC$ are compact metric spaces, it is a homeomorphism. The conjugacy identity reads $x(\theta_tu) = \varphi(t,x(u),u)$, which holds by assumption, since both sides of the equation are points, which are kept in $Q$ by the control function $\theta_tu$. For the second assertion, we use that the shift flow is topologically mixing (see \cite[Prop.~4.1.1]{CKl}). Then the first assertion implies that also the control flow on $\QC$ is topologically mixing. Intuitively, this means that the dynamics on $\QC$ is indecomposable, and projecting to $M$ it means that complete approximate controllability holds on the interior of $Q$. Maximality follows from the fact that $Q$ is a chain control set. This is made precise in \cite[Thm.~4.1.3]{CKl}, which immediately implies the assertion.%
\end{proof}

Now we are in position to prove the main theorem of this section.%

\begin{theorem}\label{thm_ie_hyperbolic_lb}
Let $Q$ be a hyperbolic set of the control-affine system $\Sigma^a$ with full-time lift $\QC$ such that the dimensions of the stable and unstable subspaces are constant on $\QC$. Further assume that for every $u\in\UC$ there exists at most one $x = x(u) \in Q$ such that $(u,x(u))\in\QC$. Then for every compact set $K \subset Q$ of positive volume the invariance entropy satisfies%
\begin{equation*}
  h_{\inv}(K,Q) \geq \inf_{(u,x)\in\QC}\limsup_{\tau\rightarrow\infty}\frac{1}{\tau}\log\left|\det(\rmd\varphi_{\tau,u})|_{E^+_{u,x}}\right|.%
\end{equation*}
\end{theorem}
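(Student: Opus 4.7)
The plan is a volume-comparison argument combined with a cocycle-swap. Let $\mathcal{S}$ be a minimal $(\tau,K,Q)$-spanning set, so that $|\mathcal{S}| = r_{\inv}(\tau,K,Q)$, and for each $u\in\mathcal{S}$ define
\[ F_u := \{x \in K : \varphi([0,\tau],x,u) \subset Q\}. \]
Then $K = \bigcup_{u\in\mathcal{S}}F_u$, so $\vol(K)\leq \sum_{u\in\mathcal{S}}\vol(F_u)$. The heart of the argument will be a uniform volume estimate of the form
\[ \vol(F_u) \leq \frac{C}{J^+((\rmd\varphi_{\tau,u_0})_{x_0})} \]
for some $(u_0,x_0)\in\QC$ depending on $u$ and a constant $C>0$ independent of $\tau$ and $u$. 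Under the uniqueness hypothesis, if $u\in\UC_Q$ then every $x\in F_u$ must $[0,\tau]$-shadow the unique orbit through $x(u)$: any other candidate trajectory in $Q$ starting at a point of $F_u$ would, after a subsequential limit via compactness of $\UC\tm M$ and continuity of $\varphi$, produce a second full-time orbit of $u$ in $Q$, contradicting uniqueness. Hence $F_u\subset B_u^\tau(x(u),\diam Q)$; for $u\notin\UC_Q$ one first replaces $u$ by a nearby control in $\UC_Q$ obtained by the same compactness argument.

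Because the volume lemma controls Bowen balls only of small radius $\ep$, it cannot be applied directly to $B_u^\tau(x(u),\diam Q)$. I would instead cover this large Bowen ball by finitely many Bowen balls of small radius, apply the volume lemma to each, and use continuity of $E^+$ (assumption (H3)) together with compactness of $\QC$ to relate the unstable Jacobians at the various centers to $J^+((\rmd\varphi_{\tau,u})_{x(u)})$ up to a uniform multiplicative constant. Summing over $\mathcal{S}$, taking logarithms, dividing by $\tau$, and letting $\tau\to\infty$ then yields
\[ h_{\inv}(K,Q) \geq \liminf_{\tau\to\infty}\frac{1}{\tau}\inf_{(u,x)\in\QC}\log J^+((\rmd\varphi_{\tau,u})_x). \]

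The final step is to swap the $\inf$ and $\limsup$ using Proposition \ref{prop_addcoc} applied to the cocycle $a(\tau,(u,x)) := \log J^+((\rmd\varphi_{\tau,u})_x)$ over the control flow on the compact invariant set $\QC$. Additivity of $a$ is the chain rule combined with invariance of $E^+$; continuity of $a$ on $\QC$ follows from (H3) and from $\CC^2$-smoothness of $\varphi$ in the state variable. Proposition \ref{prop_addcoc} identifies the preceding $\liminf$ with $\inf_{(u,x)\in\QC}\limsup_{\tau\to\infty}\frac{1}{\tau}\log J^+((\rmd\varphi_{\tau,u})_x)$, which is the statement of the theorem. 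The main obstacle will be the volume bound on $F_u$: bridging the gap between the a priori radius $\diam Q$ and the small $\ep$ required by the volume lemma must be carried out using the hyperbolic structure along the lines of Young's escape-rate estimates, and the case $u\notin\UC_Q$ must be handled without losing exponential rates.
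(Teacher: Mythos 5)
Your overall plan—bound $\vol(F_u)$ by the reciprocal of the unstable Jacobian, sum over the spanning set, take logarithms, and swap the infimum and $\limsup$ via Proposition \ref{prop_addcoc}—matches the skeleton of the paper's argument, and the final cocycle-swap step is correct. However, the core volume estimate is where your approach diverges from the paper's and where it has a genuine gap.

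You correctly observe that $F_u \subset B^\tau_u(x(u),\diam Q)$, but this is trivial (any two trajectories confined to $Q$ are within $\diam Q$ of one another) and does not use the uniqueness hypothesis at all. Your proposed fix—cover the large Bowen ball $B^\tau_u(x(u),\diam Q)$ by finitely many Bowen balls of small radius $\ep$ and apply the volume lemma to each—does not work as stated, because the number of small Bowen balls needed to cover a large one generically grows \emph{exponentially} in $\tau$ (this is essentially the definition of topological entropy), and nothing in your sketch controls this multiplicity. An exponential multiplicity factor would exactly cancel the exponential decay of $\vol(B^\tau_u(x(u),\ep))$ and destroy the estimate. The compactness argument you invoke to establish ``$[0,\tau]$-shadowing'' is also muddled: uniqueness of the full-time orbit $x(u)$ through $u$ does not by itself prevent other points from staying in $Q$ for finite time $[0,\tau]$, so there is no contradiction to extract; what you actually need is a \emph{quantitative, uniform-in-$u$} shrinking statement, and you do not supply one.

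The paper's proof sidesteps the covering problem entirely. It introduces the two-sided sets $Q^{\pm}(u,\tau) := \{x : \varphi_{t,u}(x)\in Q,\ \forall t\in[-\tau,\tau]\}$ and characterizes them as $\{x(u^*) : u^*\in\UC_Q,\ u^*|_{[-\tau,\tau]}=u|_{[-\tau,\tau]}\}$. It then proves the key uniform inclusion: for every $\ep>0$ there is a single $\tau_0>0$ (independent of $u$) such that $Q^{\pm}(u,\tau_0)\subset B(x(u),\ep)$ for all $u\in\UC_Q$. This is where the uniqueness hypothesis, the continuity of $u\mapsto x(u)$, the explicit structure of the weak$^*$ topology (finite collections of $L^1$ test functions with tails cut off), and compactness of $\UC_Q$ are combined. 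Given this, the set $\varphi_{\tau_0,u}(Q(u,2\tau_0+T))$ is directly shown to lie inside a small Bowen ball $B^T_v(y,\ep)$ with $v=\theta_{\tau_0}u$, $y=x(v)$; the extra time $2\tau_0$ is a fixed buffer that disappears in the $T\to\infty$ asymptotics. No covering, and hence no multiplicity estimate, is needed. If you want to salvage your approach you would have to reproduce this uniform squeezing statement, at which point the covering is superfluous; so the missing idea is the uniform shrinking of $Q^{\pm}(u,\tau_0)$, and the covering route is a detour you should abandon.
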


\begin{proof}
The proof is subdivided into three steps.%

\emph{Step 1.} For each $u\in\UC$ and $\tau>0$ we define the set%
\begin{equation*}
  Q^{\pm}(u,\tau) := \left\{ x\in M\ :\ \varphi_{t,u}(x) \in Q,\ \forall t\in[-\tau,\tau] \right\}.%
\end{equation*}
We claim that%
\begin{equation}\label{eq_epmchar}
  Q^{\pm}(u,\tau) = \left\{ x(u^*)\ :\ u^* \in \UC_Q,\ u^*|_{[-\tau,\tau]} = u|_{[-\tau,\tau]} \right\}.%
\end{equation}
Indeed, since $\varphi(t,x,u^*)$ only depends on the restriction of $u^*$ to $[0,t]$ and $\varphi(-t,x,u^*)$ only on the restriction of $u^*$ to $[-t,0]$ for any $t>0$, for all $t\in[-\tau,\tau]$,%
\begin{equation*}
  Q \ni \varphi_{t,u^*}(x(u^*)) = \varphi_{t,u}(x(u^*)) \mbox{\quad for\ } u^* \in \UC_Q \mbox{ with } u^*|_{[-\tau,\tau]} = u|_{[-\tau,\tau]}.%
\end{equation*}
Hence, it follows that%
\begin{equation*}
  \left\{ x(u^*)\ :\ u^*\in\UC_Q,\ u^*|_{[-\tau,\tau]} = u|_{[-\tau,\tau]} \right\} \subset Q^{\pm}(u,\tau).%
\end{equation*}
Conversely, if $x \in Q^{\pm}(u,\tau)$, we can find $u_1,u_2\in\UC$ such that the function%
\begin{equation*}
  u^*(t) := \left\{\begin{array}{ll}
	                      u_1(t) & \mbox{if } t \in (-\infty,-\tau),\\
												  u(t) & \mbox{if } t \in [-\tau,\tau],\\
												u_2(t) & \mbox{if } t \in (\tau,\infty)%
										\end{array}\right.%
\end{equation*}
satisfies $(u^*,x) \in \QC$, implying $x = x(u^*)$. This completes the proof of \eqref{eq_epmchar}.%

\emph{Step 2.} Let $\ep>0$, $u_0\in\UC_Q$. By continuity of $u\mapsto x(u)$ there exist $\delta>0$ and $x_1,\ldots,x_l \in L^1(\R,\R^m)$ with%
\begin{equation*}
  x(V_{2\delta,u_0}) \subset B\left(x(u_0),\frac{\ep}{2}\right),%
\end{equation*}
where%
\begin{equation*}
  V_{2\delta,u_0} := \left\{ u\in\UC_Q\ :\ \left|\int_{\R}\langle u(t)-u_0(t),x_i(t) \rangle \rmd t\right| < 2\delta,\ i=1,\ldots,k \right\}.%
\end{equation*}
Then there is $\tau_0>0$ with%
\begin{equation*}
  \int_{\R \backslash [-\tau_0,\tau_0]}|x_i(t)|\rmd t < \frac{\delta}{\diam U},\quad i=1,\ldots,k.%
\end{equation*}
Now let $u \in V_{\delta,u_0} = \{u\in\UC_Q\ : \ |\int \langle u(t)-u_0(t),x_i(t) \rangle \rmd t| < \delta,\ i=1,\ldots,k\}$ and consider $u^*\in\UC_Q$ with $u^*|_{[-\tau_0,\tau_0]} = u|_{[-\tau_0,\tau_0]}$. We have%
\begin{equation*}
  \left|\int_{\R}\langle u^*(t)-u(t),x_i(t)\rangle \rmd t\right| \leq \diam U \int_{\R\backslash[-\tau_0,\tau_0]}|x_i(t)|\rmd t < \delta,%
\end{equation*}
implying, for $i=1,\ldots,k$,%
\begin{eqnarray*}
  \left|\int_{\R}\langle u^*(t) - u_0(t),x_i(t) \rangle \rmd t\right| &\leq& \left|\int_{\R}\langle u^*(t)-u(t),x_i(t) \rangle \rmd t\right|\\
	&& + \left|\int_{\R} \langle u(t)-u_0(t),x_i(t)\rangle \rmd t\right| < 2\delta.%
\end{eqnarray*}
Hence, $x(u^*) \in B(x(u_0),\ep/2)$. We also have $x(u) \in B(x(u_0),\ep/2)$, which gives $x(u^*) \in B(x(u),\ep)$ and hence%
\begin{equation*}
  Q^{\pm}(u,\tau_0) \subset B(x(u),\ep) \mbox{\quad for all\ } u \in V_{\delta,u_0}.%
\end{equation*}
Letting $u_0$ range through $\UC_Q$, the open sets $V_{\delta(u_0),u_0}$ cover $\UC_Q$. By compactness, we can pick a finite subcover. This implies the existence of $\tau_0>0$ such that%
\begin{equation}\label{eq_uniformconv}
  Q^{\pm}(u,\tau_0) \subset B(x(u),\ep) \mbox{\quad for all\ } u \in \UC_Q.%
\end{equation}

\emph{Step 3.} Now consider the invariance entropy. If $\SC\subset\UC$ is a minimal (w.l.o.g.~finite) $(\tau,K,Q)$-spanning set, then%
\begin{equation}\label{eq_ethetaw}
   K \subset \bigcup_{u\in\SC}Q(u,\tau),\ Q(u,\tau) = \left\{ x\in M\ :\ \varphi([0,\tau],x,u) \subset Q \right\}.%
\end{equation}
We may assume that $\SC \subset \UC_Q$, since, by minimality, for every $u\in\SC$ there exists $x\in Q$ with $\varphi([0,\tau],x,u) \subset Q$ and $u$ can be modified outside of $[0,\tau]$ in such a way that $\varphi(\R,x,u) \subset Q$, implying $u \in \UC_Q$. It is easy to see that%
\begin{equation*}
  \varphi_{\tau,u}(Q(u,2\tau)) = Q^{\pm}(\theta_{\tau}u,\tau).%
\end{equation*}
Consider an arbitrary $T>0$ and $t \in [0,T]$. Then%
\begin{eqnarray*}
  && \varphi_{t,\theta_{\tau}u}\left(\varphi_{\tau,u}(Q(u,2\tau + T))\right)\\
	&& = 
	   \left\{ x \in M\ :\ \varphi(s - t - \tau,x,\theta_{t+\tau}u) \in Q,\ \forall s \in [0,2\tau+T] \right\}\\
		&& = \left\{ x \in M\ :\ \varphi(s,x,\theta_{t+\tau}u) \in Q,\ \forall s \in [-\tau-t,\tau+T-t] \right\} \subset Q^{\pm}(\theta_{t+\tau}u,\tau).%
\end{eqnarray*}
Combining this with \eqref{eq_uniformconv}, we find that for every $\ep>0$ there exists a $\tau>0$ such that for all $u\in\UC_Q$ and $T>0$,%
\begin{equation*}
  \varphi_{\tau,u}(Q(u,2\tau+T)) \subset \bigcap_{t\in[0,T]}\varphi_{t,\theta_{\tau}u}^{-1}B(x(\theta_{t+\tau}u),\ep).%
\end{equation*}
Using the conjugacy $\sigma$ and putting $y := x(\theta_{\tau}u)$, $v := \theta_{\tau}u$, we find that the right-hand side is the Bowen-ball $B^T_v(y,\ep)$. For sufficiently small $\ep$, the volume lemma thus implies%
\begin{equation*}
  \vol(Q(u,2\tau+T)) \leq \vol\left(\varphi_{\tau,u}^{-1}B^T_v(y,\ep)\right) \leq C\left|\det(\rmd \varphi_{T,v})|_{E^+_{v,y}}\right|^{-1},%
\end{equation*}
where we use that $|\det(\rmd\varphi_{\tau,u}^{-1})_x|$ is uniformly bounded on a small neighborhood of the compact set $\QC$. If $\SC_{2\tau+T}$ is a minimal finite $(2\tau+T,K,Q)$-spanning set, then from \eqref{eq_ethetaw} we get%
\begin{equation*}
  0 < \vol(K) \leq r_{\inv}(2\tau+T,K,Q) \max_{u\in\SC_{2\tau+T}}\vol(Q(u,2\tau+T)),%
\end{equation*}
implying%
\begin{eqnarray*}
  h_{\inv}(K,Q) &\geq& \limsup_{T\rightarrow\infty}\frac{1}{2\tau+T}\log\min_{u\in\SC_{2\tau+T}}\left|\det(\rmd\varphi_{T,\theta_{\tau}u})|_{E^+_{\phi_{\tau}(u,x(u))}}\right|\\
	&\geq& \limsup_{T\rightarrow\infty}\inf_{u\in\UC_Q}\frac{1}{T}\log\left|\det(\rmd \varphi_{T,u})|_{E^+_{u,x(u)}}\right|\\
	&=& \inf_{u\in\UC_Q}\limsup_{t\rightarrow\infty}\frac{1}{t}\log\left|\det(\rmd\varphi_{t,u})|_{E^+_{u,x(u)}}\right|\\
	&=& \inf_{(u,x)\in\QC}\limsup_{t\rightarrow\infty}\frac{1}{t}\log\left|\det(\rmd\varphi_{t,u})|_{E^+_{u,x}}\right|,%
\end{eqnarray*}
where we use Proposition \ref{prop_addcoc} to interchange the $\limsup$ and the infimum. (Note that $\alpha_t(u,x) = \log|\det(\rmd\varphi_{t,u})|_{E^+_{u,x}}|$ is a continuous additive cocycle over $\phi$.)%
\end{proof}

\begin{remark}
The assumption that for each $u$ there exists at most one $x(u)$ is in particular satisfied for small (hyperbolic) control sets that arise around hyperbolic equilibria of uncontrolled systems by adding suitable control terms. This corresponds to the well-known structural results about random hyperbolic sets arising by small random perturbations of Axiom A diffeomorphisms (cf.~Liu \cite[Thm.~1.1]{Liu}). The existence of a small control set around an equilibrium follows from Colonius and Kliemann \cite[Cor.~4.1.7]{CKl} under the inner pair condition. We also note that the preceding proof can partially be extended to non-hyperbolic sets under the assumption that the map $u \mapsto Q^{\infty}(u) := \{ x\in Q : \varphi(\R,x,u) \subset Q\}$ is continuous with respect to the Hausdorff metric on the space of compact subsets of $Q$. In this case, we have to deal with escape rates from small neighborhoods of compact sets for non-autonomous dynamical systems (instead of volumes of Bowen-balls). However, even in the autonomous case there is not much known about such escape rates except for the hyperbolic case (see, e.g., Young \cite{You} and Demers, Young \cite{DYo}).%
\end{remark}

\section{Entropy of Hyperbolic Chain Control Sets}\label{sec_form}%

In this section, we prove that the inequality in Theorem \ref{thm_ie_hyperbolic_lb} becomes an equality for a hyperbolic chain control set. One of the main ingredients in the proof is a shadowing lemma for the shift flow, proved in the following subsection.%

\subsection{A Shadowing Lemma for the Shift Flow}%

In the following, we prove a shadowing lemma for the shift flow and show that it reduces the computation of the Lyapunov spectrum of an additive cocycle to the evaluation of the cocycle on periodic points. First we prove a shadowing lemma for discrete-time shifts that is essentially taken from Akin \cite{Aki}.%

Let $(X,d)$ be a compact metric space. On $X^{\Z}$ we consider the product topology, which is compatible with the metric%
\begin{equation*}
  D(\xi,\eta) = \sup_{i\in\Z}\min\left\{d(\xi_i,\eta_i),\frac{1}{|i|}\right\}%
\end{equation*}
with $\min\{a,1/0\} = a$ by convention. One easily sees that%
\begin{equation}\label{eq_metricprop}
  D(\xi,\eta) \leq \ep \quad\Leftrightarrow\quad d(\xi_i,\eta_i) \leq \ep \mbox{ for } |i| < 1/\ep.% 
\end{equation}
We denote by $s$ the shift homeomorphism on $X^{\Z}$, $s((\xi_n)) = (\xi_{n+1})$. An $\ep$-chain of $s$ is a sequence $\{\xi^i : i\in\Z\}$ with $\xi^i\in X^{\Z}$ such that $D(s(\xi^i),\xi^{i+1}) \leq \ep$ for all $i\in\Z$. An orbit $\{s^i(\eta) : i\in\Z\}$ $\ep$-shadows a sequence $\xi^i \in X^{\Z}$ if $D(s^i(\eta),\xi^i) \leq \ep$ for all $i\in\Z$. A periodic $\ep$-chain is an $\ep$-chain $\{\xi^i : i \in \Z\}$ such that $\xi^{i+N} = \xi^i$ for some $N\in\N$ and all $i\in\Z$.%

\begin{proposition}
For every $\ep>0$ there exists $\delta>0$ such that every $\delta$-chain of $s$ is $\ep$-shadowed by an orbit. Moreover, if the $\delta$-chain is periodic, the shadowing orbit is periodic as well (with the same period).%
\end{proposition}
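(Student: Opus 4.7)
The plan is to produce the shadowing orbit explicitly as $\eta_i := \xi^i_0$, i.e., by taking the $0$-th coordinate of each term of the $\delta$-chain. This choice is dictated by the observation that if $\{\xi^i\}_{i\in\Z}$ were in fact a genuine orbit $\xi^i = s^i(\eta)$, then one would have $\xi^i_k = \eta_{i+k}$ for all $i,k$, and in particular $\xi^i_0 = \eta_i$. Taking this identity as the \emph{definition} of $\eta$, the task reduces to verifying that, with $\delta$ chosen suitably small in terms of $\ep$, this $\eta$ shadows the given chain.

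The key estimate is obtained by telescoping along the chain. By \eqref{eq_metricprop}, the hypothesis $D(s(\xi^i),\xi^{i+1}) \le \delta$ unpacks to the pointwise bound $d(\xi^i_{k+1},\xi^{i+1}_k)\le\delta$ for all $|k|<1/\delta$. The condition $D(s^i\eta,\xi^i)\le \ep$ that I want to verify translates, again via \eqref{eq_metricprop}, into the requirement $d(\xi^{i+k}_0,\xi^i_k)\le \ep$ for $|k|<1/\ep$. For $k>0$ I write
\begin{equation*}
  d(\xi^{i+k}_0,\xi^i_k) \;\le\; \sum_{j=0}^{k-1} d\bigl(\xi^{i+j}_{k-j},\xi^{i+j+1}_{k-j-1}\bigr),
\end{equation*}
with the analogous telescoping sum for $k<0$, and bound each summand by $\delta$ using the chain condition at the appropriate index. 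This gives $d(\xi^{i+k}_0,\xi^i_k)\le |k|\delta$. Choosing $\delta$ of order $\ep^2$ (and $\delta\le\ep$, so that the range $|k|<1/\ep$ of interest sits inside the range $|k|<1/\delta$ where the chain condition is available) yields $|k|\delta < \delta/\ep \le \ep$, which is exactly the required shadowing.

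The periodic statement is then essentially cost-free: if $\xi^{i+N}=\xi^i$ for all $i$, then the definition $\eta_i = \xi^i_0$ immediately gives $\eta_{i+N}=\eta_i$ for every $i$, hence $s^N(\eta)=\eta$, so the shadowing orbit has the same period $N$.

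I do not expect a substantive obstacle. The proof is essentially algebraic because the metric $D$ on $X^{\Z}$ is tuned so that the projection $\xi\mapsto\xi_0$ is Lipschitz, and the shift $s$ is a coordinate relabelling, so a pointwise prescription of the shadowing orbit works. The only real book-keeping is making sure the telescoping range $j=0,\ldots,|k|-1$ stays inside the window where the chain condition supplies information; this is precisely why $\delta$ must be taken roughly of order $\ep^2$ rather than order $\ep$. There is no dynamical difficulty of the sort one encounters in hyperbolic shadowing (no transversality, no local product structure, no Anosov-closing argument), and compactness of $X$ enters only implicitly through the boundedness of $d$ that makes $D$ well-defined.
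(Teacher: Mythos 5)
Your proof is correct and follows essentially the same route as the paper's: both define the shadowing point by $\eta_i := \xi^i_0$, telescope across the chain to get the pointwise bound $d(\eta_{i+k},\xi^i_k)\le |k|\delta$, and then read off shadowing via the defining property \eqref{eq_metricprop} of $D$ after taking $\delta$ of order $\ep^2$. The paper's only cosmetic difference is that it records the intermediate bound $\sqrt{\delta}$ (valid for $|j|\le 1/\sqrt{\delta}$) before concluding $<\ep$, whereas you pass directly from $|k|\delta<\delta/\ep\le\ep$; the periodicity argument is identical.
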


\begin{proof}
For given $\ep\in(0,1)$ let $\delta \in (0,\ep^2)$ and assume that $\{\xi^i : i \in\Z\}$ is a $\delta$-chain for $s$, i.e., $\xi^i \in X^{\Z}$ and $D(\xi^{i+1},s(\xi^i)) \leq \delta$ for all $i\in\Z$. Put $\eta_i := \xi^i_0$ for $i\in\Z$. Then $\eta \in X^{\Z}$ satisfies%
\begin{equation}\label{eq_assertion}
  D(s^i(\eta),\xi^i) \leq \sqrt{\delta} < \ep \mbox{ for all } i\in\Z.%
\end{equation}
This is proved as follows. For $|j| < 1/\delta$, using the triangle inequality, we get%
\begin{eqnarray*}
  d(s^i(\eta)_j,\xi^i_j) &=& d(\eta_{i+j},\xi^i_j) = d(\xi_0^{i+j},\xi^i_j)\\
  &\leq& \sum_kd(\xi^{i+j-k-1}_{k+1},\xi_k^{i+j-k})
  = \sum_k d(s(\xi^{i+j-k-1})_k,\xi^{i+j-k}_k),%
\end{eqnarray*}
where the summation is over $0\leq k < j$ if $j>0$ and over $j\leq k < 0$ if $j < 0$. Because $|k| \leq |j| < 1/\delta$, \eqref{eq_metricprop} implies that each term is bounded by $\delta$, because $\{\xi^i\}$ is a $\delta$-chain. Therefore, $|j|<1/\delta$ implies%
\begin{equation*}
  d(s^i(\eta)_j,\xi^i_j) \leq |j|\delta \mbox{\quad for all } i\in\Z.%
\end{equation*}
Hence, if $|j| \leq 1/\sqrt{\delta} < 1/\delta$, then $d(s^i(\eta)_j,\xi^i_j) \leq \sqrt{\delta}$. Then \eqref{eq_metricprop} implies \eqref{eq_assertion}. If $\xi^{i+N} = \xi^i$, it follows that $\eta_{i+N} = \xi^{i+N}_0 = \xi^i_0 = \eta_i$ for all $i\in\Z$. Hence, the shadowing point $\eta$ is a periodic sequence and therefore its orbit under the shift is periodic.%
\end{proof}

Now we consider the continuous-time shift $\theta:\R\tm\UC\rightarrow\UC$, $(t,u)\mapsto\theta_tu$, on the set $\UC$ of admissible control functions, which is a chain transitive dynamical system on a compact metrizable space. By $d_{\UC}$ we denote a fixed metric on $\UC$, compatible with the weak$^*$-topology. We can identify $\UC$ with a product space $X^{\Z}$, where $X := \{u:[0,1]\rightarrow\R^m\ :\ u \mbox{ is measurable with } u(t)\in U \mbox{ a.e.}\}$, and the bijection%
\begin{equation}\label{eq_weakstarproductid}
  \alpha:\UC \rightarrow X^{\Z},\quad u \mapsto (u_k)_{k\in\Z},\quad u_k := u|_{[k,k+1]}(\cdot + k),%
\end{equation}
is used to identify elements of $\UC$ with sequences. We endow $X$ with the weak$^*$-topology of $L^{\infty}([0,1],\R^m) = L^1([0,1],\R^m)^*$ and $X^{\Z}$ with the corresponding product topology. Then $\alpha$ becomes a homeomorphism. Since $\UC$ and $X^{\Z}$ are compact metric spaces, it suffices to show continuity. Let $(u^{(n)})_{n\geq1}$ be a sequence in $\UC$ converging to some $u\in\UC$. Note that the sequence $\alpha(u^{(n)})$ converges to $\alpha(u)$ iff $\alpha(u^{(n)})_k$ converges to $\alpha(u)_k$ for every $k\in\Z$. But this is clearly the case, since $|\int_{\R}\langle u^{(n)}(t) - u(t),x(t) \rangle \rmd t| \rightarrow 0$ for every $L^1$-function $x$ implies%
\begin{eqnarray*}
  &&\left|\int_0^1 \left\langle u^{(n)}(t+k) - u(t+k),y(t) \right\rangle \rmd t\right| = \left|\int_k^{k+1} \left\langle u^{(n)}(t) - u(t),y(t-k) \right\rangle \rmd t\right|\\
	&& = \left|\int_{\R}\left\langle u^{(n)}(t) - u(t), y(t-k) \cdot \chi_{[k,k+1]}(t) \right\rangle \rmd t \right| \rightarrow 0%
\end{eqnarray*}
for every $L^1$-function $y\in L^1([0,1],\R^m)$. Moreover, $\alpha$ is obviously a topological conjugacy between the time-one-map $\theta_1:\UC \rightarrow \UC$ of the shift flow and the shift homeomorphism $s:X^{\Z} \rightarrow X^{\Z}$. The next corollary immediately follows.%

\begin{corollary}\label{cor_shiftshadowing}
The shift flow $\theta:\R\tm\UC\rightarrow\UC$ satisfies the following shadowing property. For every $\ep>0$ there is $\delta>0$ such that for every bi-infinite sequence $(u^i)_{i\in\Z}$ in $\UC$ with $d_{\UC}(\theta_1 u^i,u^{i+1}) \leq \delta$ for all $i\in\Z$ there exists $u\in\UC$ with $d_{\UC}(\theta_i u,u^i) \leq \ep$ for all $i\in\Z$. If $(u^i)_{i\in\Z}$ is periodic with period $n$, then so is $u$.%
\end{corollary}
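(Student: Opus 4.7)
The plan is to reduce the continuous-time statement to the discrete-time shadowing proposition via the homeomorphism $\alpha:\UC\rightarrow X^{\Z}$ defined in \eqref{eq_weakstarproductid}. Since $\alpha$ conjugates $\theta_1$ with the shift $s$ and both $\UC$ and $X^{\Z}$ are compact metric spaces, $\alpha$ and $\alpha^{-1}$ are uniformly continuous; this is what allows us to translate $\delta$-chains and $\ep$-shadows back and forth between the two spaces.

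More concretely, given $\ep>0$, I would first use uniform continuity of $\alpha^{-1}$ to pick $\ep'>0$ so that $D(\xi,\eta)\leq\ep'$ implies $d_{\UC}(\alpha^{-1}(\xi),\alpha^{-1}(\eta))\leq\ep$. Apply the discrete shadowing proposition to $\ep'$ to obtain a corresponding $\delta'>0$ for $s$-chains. Then use uniform continuity of $\alpha$ to pick $\delta>0$ so that $d_{\UC}(v,w)\leq\delta$ implies $D(\alpha(v),\alpha(w))\leq\delta'$. Now, given any bi-infinite sequence $(u^i)_{i\in\Z}$ in $\UC$ with $d_{\UC}(\theta_1 u^i,u^{i+1})\leq\delta$, set $\xi^i:=\alpha(u^i)$. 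The conjugacy identity $\alpha\circ\theta_1=s\circ\alpha$ gives
\begin{equation*}
  D(s(\xi^i),\xi^{i+1}) = D(\alpha(\theta_1 u^i),\alpha(u^{i+1})) \leq \delta',
\end{equation*}
so $(\xi^i)$ is a $\delta'$-chain for $s$. The discrete shadowing proposition then yields $\eta\in X^{\Z}$ with $D(s^i(\eta),\xi^i)\leq\ep'$ for all $i\in\Z$. Setting $u:=\alpha^{-1}(\eta)$ and applying $\alpha^{-1}$ with the aforementioned modulus gives $d_{\UC}(\theta_i u,u^i)\leq\ep$ for all $i\in\Z$, as required.

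For the periodic part, if $u^{i+n}=u^i$ for some $n\in\N$, then $\xi^{i+n}=\xi^i$, so the discrete shadowing proposition produces a periodic $\eta$ with $s^n(\eta)=\eta$, hence $\theta_n u = \alpha^{-1}(s^n(\eta)) = \alpha^{-1}(\eta) = u$, so $u$ is $n$-periodic in $\UC$.

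There is no real obstacle here: the work was done in the preceding discrete-time proposition and in establishing that $\alpha$ is a homeomorphism compatible with the time-one map. The only mild point to check is that the continuity-of-$\alpha$ argument (already carried out in the excerpt) guarantees uniform continuity, which is automatic because $\UC$ and $X^{\Z}$ are compact metric spaces; hence the translation of moduli between the two metrics causes no trouble.
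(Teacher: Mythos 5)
Your proof is correct and follows essentially the same route as the paper: the paper simply remarks that the corollary "immediately follows" from the conjugacy $\alpha$ between $(\UC,\theta_1)$ and $(X^{\Z},s)$ together with the discrete-time shadowing proposition. Your write-up spells out the implicit transfer, including the use of uniform continuity of $\alpha$ and $\alpha^{-1}$ (automatic since both spaces are compact metric) to translate the moduli $\ep$ and $\delta$, and correctly handles the periodic case via $\theta_n u = \alpha^{-1}(s^n\eta) = \alpha^{-1}(\eta) = u$.
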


Now consider the following more general version of chains. An $(\ep,T)$-chain for $\theta$ is given by $n\in\N$, points $u_0,\ldots,u_n \in \UC$ and times $T_0,\ldots,T_{n-1} \geq T$ such that $d_{\UC}(\theta_{T_i}u_i,u_{i+1}) \leq \ep$ for $i=0,1,\ldots,n-1$. Let $a:\R\tm\UC \rightarrow \R$ be a continuous additive cocycle over $\theta$. Then the Morse spectrum of $a$ is defined as follows. To every $(\ep,T)$-chain $\zeta$ we associate the finite-time Morse exponent%
\begin{equation*}
  \lambda(\zeta) := \frac{1}{T(\zeta)}\sum_{i=0}^{n-1}a(T_i,u_i),\quad T(\zeta) = \sum_{i=0}^{n-1}T_i.%
\end{equation*}
Then the Morse spectrum of $a$ is given by%
\begin{equation*}
  \Lambda_{\Mo}(a) := \bigcap_{T,\ep>0}\cl\left\{\lambda(\zeta) : \zeta \mbox{ is an } (\ep,T)\mbox{-chain}\right\}.% 
\end{equation*}
By San Martin and Seco \cite[Thm.~3.2(2)]{SSe}, the Morse spectrum is the same if one only considers chains with integer times $T_i \in \N$. But then we may assume that $T_i = 1$ for all $i$ by adding trivial jumps. Moreover, by \cite[Lem.~8]{KSt}, periodic chains are sufficient to obtain the Morse spectrum. Hence, we only need to consider chains of the form $u_0,u_1,\ldots,u_n$ with $d_{\UC}(\theta_1 u_i,u_{i+1}) \leq \ep$ for $i=0,1,\ldots,n-1$ and $u_0 = u_n$. We call such chains \emph{regular periodic $\ep$-chains}. On the other hand, the Lyapunov spectrum $\Lambda_{\Ly}(a)$ of $a$ is defined as follows. An $a$-Lyapunov exponent is a limit of the form%
\begin{equation*}
  \lambda(u) := \lim_{t\rightarrow\infty}\frac{1}{t}a(t,u),\quad u\in\UC.%
\end{equation*}
The Lyapunov spectrum $\Lambda_{\Ly}(a)$ is the set of all such limits. By \cite[Thm.~3.2(6)]{SSe}, $\Lambda_{\Mo}(a)$ is a compact interval, containing $\Lambda_{\Ly}(a)$, whose endpoints are Lyapunov exponents. In particular,%
\begin{equation*}
  \min\Lambda_{\Mo}(a) = \min\Lambda_{\Ly}(a).%
\end{equation*}

\begin{proposition}\label{prop_perlyap}
It holds that%
\begin{equation*}
  \min\Lambda_{\Ly}(a) = \inf_{u \mathrm{\ periodic}}\lim_{t\rightarrow\infty}\frac{1}{t}a_t(u).%
\end{equation*}
\end{proposition}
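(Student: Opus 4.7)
The inequality $\inf_{u\text{ periodic}}\lim_{t\to\infty}\tfrac{1}{t}a(t,u) \geq \min\Lambda_{\Ly}(a)$ is immediate: if $u$ is $\tau$-periodic, additivity of the cocycle gives $a(n\tau,u) = n\,a(\tau,u)$, and since $a$ is continuous and hence bounded on the compact set $[0,\tau]\tm\UC$, the limit exists and equals $\tfrac{1}{\tau}a(\tau,u)$, which belongs to $\Lambda_{\Ly}(a)$. The content of the proposition is therefore the reverse inequality.

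The plan is to combine three ingredients already laid out in the preceding text: (i) $\min\Lambda_{\Mo}(a) = \min\Lambda_{\Ly}(a)$; (ii) $\Lambda_{\Mo}(a)$ is realized by regular periodic $\ep$-chains $u_0,u_1,\ldots,u_n = u_0$ with $d_{\UC}(\theta_1 u_i,u_{i+1})\leq\ep$, for which the finite-time Morse exponent takes the form $\lambda(\zeta) = \tfrac{1}{n}\sum_{i=0}^{n-1}a(1,u_i)$; and (iii) the periodic shadowing property of the shift flow provided by Corollary \ref{cor_shiftshadowing}. Fix $\eta>0$. Since $a(1,\cdot):\UC\to\R$ is continuous on the compact space $\UC$, it is uniformly continuous, so there exists $\kappa>0$ with $|a(1,u)-a(1,v)|<\eta$ whenever $d_{\UC}(u,v)\leq\kappa$. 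Shrink $\kappa$ if necessary so that $\kappa$ serves as the shadowing accuracy $\ep$ in Corollary \ref{cor_shiftshadowing}, and let $\delta = \delta(\kappa)$ be the chain accuracy supplied by that corollary.

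By (i) and (ii), I choose a regular periodic $\delta$-chain $\zeta = (u_0,u_1,\ldots,u_n = u_0)$ with $\lambda(\zeta) \leq \min\Lambda_{\Mo}(a) + \eta$. Extending $(u_i)$ periodically with period $n$ yields a bi-infinite periodic $\delta$-chain of $\theta$, so Corollary \ref{cor_shiftshadowing} produces an $n$-periodic $u\in\UC$ with $d_{\UC}(\theta_i u, u_i) \leq \kappa$ for all $i\in\Z$. Since $u$ is $n$-periodic, the first paragraph gives $\lim_{t\to\infty}\tfrac{1}{t}a(t,u) = \tfrac{1}{n}a(n,u) = \tfrac{1}{n}\sum_{i=0}^{n-1}a(1,\theta_i u)$. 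The choice of $\kappa$ via uniform continuity then yields
\begin{equation*}
  \left|\tfrac{1}{n}a(n,u) - \lambda(\zeta)\right| \;\leq\; \tfrac{1}{n}\sum_{i=0}^{n-1}\bigl|a(1,\theta_i u) - a(1,u_i)\bigr| \;\leq\; \eta,
\end{equation*}
so $\lim_{t\to\infty}\tfrac{1}{t}a(t,u) \leq \min\Lambda_{\Mo}(a) + 2\eta$. As $\eta>0$ was arbitrary, the infimum over periodic $u$ is bounded above by $\min\Lambda_{\Mo}(a) = \min\Lambda_{\Ly}(a)$.

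The main obstacle is the transfer from a periodic $\delta$-chain $\zeta$ to an honest periodic orbit of $\theta$ whose Lyapunov exponent is close to $\lambda(\zeta)$: this is precisely what the periodic version of Corollary \ref{cor_shiftshadowing} delivers, and the matching of exponents then reduces to the uniform continuity of $a(1,\cdot)$ on $\UC$. The two available reductions of the Morse spectrum (to integer times and to periodic chains, cited from \cite{SSe} and \cite{KSt}) are used to put $\zeta$ into exactly the form required by the shadowing lemma, i.e.\ with unit time steps and period $n$.
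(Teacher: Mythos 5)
Your proposal is correct and takes essentially the same route as the paper: uniform continuity of $a(1,\cdot)$, a regular periodic $\delta$-chain realizing $\min\Lambda_{\Mo}(a)$ up to $\eta$, periodic shadowing via Corollary \ref{cor_shiftshadowing}, and the term-by-term comparison $\tfrac{1}{n}\sum_i |a(1,\theta_i u)-a(1,u_i)|\leq\eta$. The only difference is that you spell out the trivial inequality $\inf_{u\text{ periodic}}\geq\min\Lambda_{\Ly}(a)$ explicitly, whereas the paper obtains both inequalities at once by producing periodic points whose Lyapunov exponents approximate $\min\Lambda_{\Mo}(a)$ in absolute value.
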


\begin{proof}
The map $a(1,\cdot)$ is uniformly continuous on the compact space $\UC$. Hence, for given $\alpha>0$ there is $\ep>0$ such that $d_{\UC}(u,v) \leq \ep$ implies $|a(1,u)-a(1,v)| \leq \alpha/2$. Let $\lambda := \min\Lambda_{\mathrm{Mo}}(a)$ and choose $\delta = \delta(\ep)$ according to the periodic shadowing property in Corollary \ref{cor_shiftshadowing}. There exists a regular periodic $\delta$-chain $\zeta =(u_0,\ldots,u_n)$ with $|\lambda - \lambda(\zeta)| \leq \alpha/2$. The chain $\zeta$ is $\ep$-shadowed by the orbit of an $n$-periodic $u_*$. Then%
\begin{eqnarray*}
  |\lambda - \lambda(u_*)| \leq |\lambda - \lambda(\zeta)| + |\lambda(\zeta) - \lambda(u_*)| \leq \frac{\alpha}{2} + |\lambda(\zeta) - \lambda(u_*)|,%
\end{eqnarray*}
and we have (using the cocycle property of $a$)%
\begin{eqnarray*}
  |\lambda(\zeta) - \lambda(u_*)| &=& \left|\frac{1}{n}\sum_{i=0}^{n-1}a(1,u_i) - \lim_{t\rightarrow\infty}\frac{1}{t}a(t,u_*)\right|\allowdisplaybreaks\\
  &=& \left|\frac{1}{n}\sum_{i=0}^{n-1}a(1,u_i) - \frac{1}{n}a(n,u_*)\right|\allowdisplaybreaks\\
  &=& \frac{1}{n}\left|\sum_{i=0}^{n-1}(a(1,u_i) - a(1,\theta_i u_*))\right|\allowdisplaybreaks\\
  &\leq& \frac{1}{n}\sum_{i=0}^{n-1}\left|a(1,u_i) - a(1,\theta_i u_*)\right| \leq \frac{\alpha}{2}.%
\end{eqnarray*}
Putting everything together, we end up with $|\lambda - \lambda(u_*)| \leq \alpha$, showing that $\lambda$ can be approximated by Lyapunov exponents of periodic points.%
\end{proof}

\subsection{The Main Result}%

In the following, we consider a control-affine system%
\begin{equation*}
  \Sigma:\quad \dot{x}(t) = f_0(x(t)) + \sum_{i=1}^m u_i(t)f_i(x(t)),\quad u\in\UC.%
\end{equation*}
We assume that $Q$ is a hyperbolic chain control set of $\Sigma$ with nonempty interior. Moreover, we make the following assumptions:%
\begin{enumerate}
\item[(a)] The vector fields $f_0,f_1,\ldots,f_m$ are smooth and the Lie algebra rank condition (for local accessibility) is satisfied on $\inner Q$.%
\item[(b)] For each $u\in\UC$ there exists a unique $x(u)\in Q$ with $(u,x(u))\in\QC$.%
\end{enumerate}
Recall that condition (b) implies that the map $u \mapsto (u,x(u))$ is a topological conjugacy between the shift on $\UC$ and the control flow on $\QC$, and that (a) and (b) together imply that $Q$ is the closure of a control set $D$ (see Prop.~\ref{prop_conj}). For the proof of our main result, we need a series of approximation lemmas.%

\begin{lemma}\label{lem_diff}
The set $\CC^{\infty}(\R,\inner U)$ is dense in $\UC$ w.r.t.~the weak$^*$-topology.%
\end{lemma}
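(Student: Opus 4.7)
The plan is a two-stage approximation argument: first push the essentially attained values of a given $u \in \UC$ strictly inside $U$ by a convex combination with a fixed interior point, then smooth by convolution with a mollifier while preserving the strict-interior property by convexity.

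\textbf{Stage 1 (pull values into $\inner U$).} Fix some $u_0 \in \inner U$ and define, for each $n\in\N$,
\[
  u_n(t) := (1 - 1/n)\, u(t) + (1/n)\, u_0.
\]
Since $U$ is convex and $u_0 \in \inner U$, the affine map $y \mapsto (1-1/n)y + (1/n)u_0$ sends $U$ into a compact convex set $K_n \subset \inner U$; this uses the standard fact that any convex combination with strictly positive weight on an interior point of a convex set lies in the interior. Hence $u_n \in \UC$ has essential image contained in $K_n$, and $\|u_n - u\|_{L^\infty} \leq (1/n)\diam(U) \to 0$, so $u_n \to u$ uniformly and \emph{a fortiori} in the weak$^*$-topology.

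\textbf{Stage 2 (mollify).} Fix a nonnegative mollifier $\eta_{\ep} \in \CC^{\infty}(\R)$ with $\int_\R \eta_\ep = 1$ and compact support shrinking to $\{0\}$ as $\ep \to 0$. For each $n$ set $u_n^{\ep} := u_n * \eta_{\ep} \in \CC^{\infty}(\R,\R^m)$. Because $\eta_\ep$ is a probability density and $K_n$ is convex, $u_n^{\ep}(t)$ is a convex combination of values of $u_n$, so $u_n^{\ep}(t) \in K_n \subset \inner U$ for every $t$; hence $u_n^\ep \in \CC^\infty(\R,\inner U)$. For any $\phi \in L^1(\R,\R^m)$, a change of variables yields
\[
  \int_\R \langle u_n^\ep(t) - u_n(t),\phi(t)\rangle\, dt = \int_\R \langle u_n(t), (\check\eta_\ep * \phi)(t) - \phi(t)\rangle\, dt,
\]
where $\check\eta_\ep(s) := \eta_\ep(-s)$. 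Since $\check\eta_\ep * \phi \to \phi$ in $L^1$ by the standard mollifier property and $u_n \in L^\infty$, this integral tends to $0$. Hence $u_n^{\ep} \to u_n$ in the weak$^*$-topology as $\ep\to 0$.

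\textbf{Stage 3 (diagonalize).} The weak$^*$-topology on the bounded set $\UC \subset L^\infty(\R,\R^m)$ is metrizable because the predual $L^1(\R,\R^m)$ is separable. Choosing $\ep_n > 0$ small enough that $u_n^{\ep_n}$ lies within $1/n$ of $u_n$ in a compatible metric, the sequence $u_n^{\ep_n} \in \CC^\infty(\R,\inner U)$ converges weak$^*$ to $u$, completing the proof.

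The only nontrivial point is that mollification alone could in principle push an interior value to the boundary through some limit — more importantly, a raw $u \in \UC$ may attain values on $\partial U$, and convolution only guarantees $U$-valuedness, not $\inner U$-valuedness. Stage 1 provides the interior buffer that, together with convexity of $K_n$, makes Stage 2 stay strictly inside $U$.
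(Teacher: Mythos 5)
Your proof is correct, and it takes a genuinely different route from the paper's. The paper invokes Sontag's result \cite[Rem.~C.1.2]{Son} to obtain, on each compact interval $I_k$, a sequence of $\CC^{\infty}$-functions with values in $\inner U$ converging pointwise a.e.\ to the given $u_0$; it then extends these to $\R$ via smooth cut-off functions (using convexity of $U$ together with $0\in\inner U$ to keep values inside $\inner U$), and finishes by splitting a weak$^*$-neighborhood integral into a $[-k,k]$ part handled by dominated convergence and a tail made small by the choice of $k$. Your argument replaces the appeal to Sontag and the cut-off bookkeeping by the classical two-step density scheme: a convex interpolation $(1-1/n)u + (1/n)u_0$ (with $u_0\in\inner U$) to create a uniform interior buffer, then mollification, whose output remains in the compact convex set $K_n\subset\inner U$ because a probability-weighted average of $K_n$-values stays in $K_n$. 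The weak$^*$ convergence of the mollified family is verified by moving the convolution onto the $L^1$-test function and using $\check\eta_\ep*\phi\to\phi$ in $L^1$, and a diagonal extraction in the (metrizable) weak$^*$-topology closes the argument. Your version is more self-contained and elementary — it uses nothing beyond standard mollifier theory and metrizability of $\UC$ — and it requires only $\inner U\neq\emptyset$ rather than $0\in\inner U$ (though the latter is assumed throughout the paper anyway); the paper's version is shorter if Sontag's approximation remark is taken as a black box, since it directly delivers $\inner U$-valued smooth approximants on compact intervals without needing the Stage-1 interior buffer.
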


\begin{proof}
By Sontag \cite[Rem.~C.1.2]{Son} we can approximate a given $u_0 \in \UC$ pointwise almost everywhere by $\CC^{\infty}$-functions with values in $\inner U$ on every compact subinterval of $\R$. Let $I_k := [-k-\delta,k+\delta]$, $k\in\N$, for a fixed $\delta>0$. Then there exists for each $k\in\N$ a sequence $(f_n^{(k)})_{n\geq1}$ in $\CC^{\infty}(I_k,\inner U)$ converging almost everywhere to $u_0|_{I_k}$. Using smooth cut-off functions, we may assume that the $f_n^{(k)}$ are defined on $\R$ so that pointwise convergence to $u_0$ (for each $k$) holds on $[-k,k]$. Choosing cut-off functions with values in $[0,1]$, we still have $f_n^{(k)}(t) \in \inner U$ for all $t\in\R$, since $U$ is convex and $0 \in \inner U$. Now take a neighborhood of $u_0$ of the form%
\begin{equation*}
  W = \left\{ u \in \UC \ :\ \left|\int_{\R} \langle u(t) - u_0(t),x_i(t) \rangle \rmd t\right| < 1,\ i=1,\ldots,l \right\},%
\end{equation*}
where $x_1,\ldots,x_l \in L^1(\R,\R^m)$. Then there exists $k\in\N$ such that%
\begin{equation*}
  \int_{\R\backslash [-k,k]}|x_i(t)| \rmd t < \frac{1}{2\diam U},\quad i = 1,\ldots,l.%
\end{equation*}
Consider for each $i \in \{1,\ldots,l\}$ the sequence $(v_n^i)_{n\in\N}$ defined by%
\begin{equation*}
  v_n^i(t) := \langle f_n^{(k)}(t),x_i(t) \rangle,\quad v_n^i \in L^1(\R,\R).%
\end{equation*}
On $[-k,k]$ this sequence converges almost everywhere to $\langle u_0,x_i \rangle$. Moreover,%
\begin{equation*}
  |v_n^i(t)| \leq |f_n^{(k)}(t)| \cdot |x_i(t)| \leq \|f_n^{(k)}\|_{\infty} |x_i(t)|.%
\end{equation*}
Hence, the theorem of dominated convergence yields%
\begin{equation*}
  \left|\int_{[-k,k]} \langle u_0(t) - f_n^{(k)}(t), x_i(t) \rangle \rmd t\right| \leq \int_{[-k,k]}\left|\langle u_0(t),x_i(t) \rangle - v_n^i(t)\right|\rmd t \rightarrow 0.%
\end{equation*}
Consequently, we may choose $n$ large enough so that%
\begin{equation*}
  \left|\int_{[-k,k]} \langle u_0(t) - f_n^{(k)}(t), x_i(t) \rangle \rmd t\right| < \frac{1}{2},\quad i = 1,\ldots,l,%
\end{equation*}
and we can conclude that%
\begin{eqnarray*}
  && \left|\int_{\R} \langle u_0(t) - f_n^{(k)}(t), x_i(t) \rangle \rmd t\right| \leq \left|\int_{[-k,k]}\langle u_0(t) - f_n^{(k)}(t), x_i(t) \rangle \rmd t\right|\\
	&& \qquad + \left|\int_{\R\backslash [-k,k]} \langle u_0(t) - f_n^{(k)}(t), x_i(t) \rangle \rmd t\right| < \frac{1}{2} + \frac{1}{2} = 1.%
\end{eqnarray*}
Hence, $f_n^{(k)} \in W$. Since the family of all such neighborhoods forms a subbasis of the weak$^*$-topology on $\UC$, the proof is complete.%
\end{proof}

\begin{lemma}\label{lem_approx}
Let $u \in L^1(\R,\R^m)$ and let $\sigma_n:\R\rightarrow\R$ be a sequence of diffeomorphisms satisfying the following assumptions:%
\begin{enumerate}
\item[(i)] The sequences $\sigma_n$ and $\sigma_n^{-1}$ converge locally uniformly to the identity.%
\item[(ii)] The sequence of derivatives $(\rmd/\rmd t)\sigma_n^{-1}$ converges locally uniformly to the constant function with value $1$.%
\end{enumerate}
Then for every $\tau>0$ we have%
\begin{equation*}
  \lim_{n\rightarrow\infty}\int_0^{\tau}|u(\sigma_n(t)) - u(t)|\rmd t = 0.%
\end{equation*}
If, additionally, $u$ is essentially bounded and $z\in L^1(\R,\R^m)$, then%
\begin{equation*}
  \lim_{n\rightarrow\infty}\int_0^{\tau} \left|u(\sigma_n(t)) - u(t)\right| |z(t)| \rmd t = 0.%
\end{equation*}
\end{lemma}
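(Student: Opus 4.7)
The plan is to prove both statements by a standard density plus triangle-inequality argument, reducing to the case of continuous compactly supported $u$, and then exploiting the change-of-variables formula together with assumptions (i) and (ii) to control the resulting error terms. I will first handle the unweighted statement and then deduce the weighted one from it via a splitting on the level sets of $|z|$.

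For the first statement, given $\ep>0$ I will use density of $\CC_c(\R,\R^m)$ in $L^1(\R,\R^m)$ to pick $v\in\CC_c(\R,\R^m)$ with $\int_\R|u-v|\,\rmd t < \ep$. By the triangle inequality,
\begin{equation*}
  \int_0^\tau|u(\sigma_n(t))-u(t)|\,\rmd t \leq \int_0^\tau|u(\sigma_n(t))-v(\sigma_n(t))|\,\rmd t + \int_0^\tau|v(\sigma_n(t))-v(t)|\,\rmd t + \int_0^\tau|v(t)-u(t)|\,\rmd t.
\end{equation*}
The third term is at most $\ep$ by choice of $v$. The middle term tends to $0$: by assumption (i), $\sigma_n\to \id$ uniformly on $[0,\tau]$, and $v$ is uniformly continuous, so $v\circ\sigma_n\to v$ uniformly on $[0,\tau]$. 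For the first term I apply the change of variables $s=\sigma_n(t)$, obtaining
\begin{equation*}
  \int_0^\tau|u(\sigma_n(t))-v(\sigma_n(t))|\,\rmd t = \int_{\sigma_n(0)}^{\sigma_n(\tau)}|u(s)-v(s)|\,\frac{\rmd\sigma_n^{-1}}{\rmd s}(s)\,\rmd s.
\end{equation*}
By (i), the interval $[\sigma_n(0),\sigma_n(\tau)]$ lies in a fixed compact set $J\subset\R$ for all sufficiently large $n$, and by (ii) the derivative $\rmd\sigma_n^{-1}/\rmd s$ is bounded by $2$ on $J$ once $n$ is large. Hence the first term is at most $2\int_\R|u-v|\,\rmd t < 2\ep$. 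Combining the three estimates yields $\limsup_{n\to\infty}\int_0^\tau|u(\sigma_n(t))-u(t)|\,\rmd t \leq 3\ep$, and since $\ep>0$ was arbitrary the first statement follows.

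For the second statement, the essential boundedness of $u$ gives $|u(\sigma_n(t))-u(t)|\leq 2\|u\|_\infty$ almost everywhere. For any $K>0$ I split
\begin{equation*}
  \int_0^\tau|u(\sigma_n(t))-u(t)|\,|z(t)|\,\rmd t \leq K\int_0^\tau|u(\sigma_n(t))-u(t)|\,\rmd t + 2\|u\|_\infty\int_{\{t\in[0,\tau]:\,|z(t)|>K\}}|z(t)|\,\rmd t.
\end{equation*}
Given $\ep>0$, I first choose $K$ large enough, using $z\in L^1(\R,\R^m)$, so that the second summand is at most $\ep/2$; then, invoking the first part of the lemma, I choose $n$ large enough so that the first summand is at most $\ep/2$. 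The second statement follows.

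No serious obstacle is anticipated. The only subtle point is the change of variables in the first step, which depends crucially on (ii) to bound the Jacobian uniformly in $n$ on a fixed compact set and on (i) to guarantee that the image interval $[\sigma_n(0),\sigma_n(\tau)]$ eventually lies in such a set; both are immediate from the hypotheses.
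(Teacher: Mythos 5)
Your proof is correct. The first part follows the paper's own route: approximate $u$ in $L^1$ by a continuous function, split by the triangle inequality, handle the middle term by uniform continuity of the approximant together with assumption (i), and change variables $s=\sigma_n(t)$ in the first term, using (ii) to bound $(\rmd/\rmd s)\sigma_n^{-1}(s)$ uniformly in $n$ on a fixed compact set containing the image intervals. (Your limits of integration $\sigma_n(0)$, $\sigma_n(\tau)$ are the correct ones; the paper writes $\sigma_n^{-1}(0)$, $\sigma_n^{-1}(\tau)$, a harmless slip since both $\sigma_n$ and $\sigma_n^{-1}$ tend to the identity.) In the second part you take a genuinely different, though equally elementary, route: the paper approximates $z$ in $L^1$ by a continuous $z_k$ and bounds the error $2\|u\|_\infty\int_0^\tau|z-z_k|\,\rmd t$, then applies the bounded-weight case to $z_k$; you instead truncate $z$ at level $K$, bounding the contribution from $\{|z|\le K\}$ by $K$ times the first part and the contribution from $\{|z|>K\}$ by $2\|u\|_\infty\int_{\{|z|>K\}\cap[0,\tau]}|z|\,\rmd t$, which tends to $0$ as $K\to\infty$. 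Both methods reduce to a bounded weight so that the first part applies; your truncation avoids a second density appeal at the cost of a (standard) tail estimate, and is arguably the more self-contained of the two.
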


\begin{proof}
For a continuous $u$, the first statement is trivial. If $u$ is an arbitrary element of $L^1(\R,\R^m)$, then there exists a sequence $u_k$ of continuous functions with $\int_{\R}|u_k(t) - u(t)|\rmd t \rightarrow 0$. It follows that%
\begin{eqnarray*}
  \int_0^{\tau}|u(\sigma_n(t)) - u(t)|\rmd t &\leq& \int_0^{\tau}|u(\sigma_n(t)) - u_k(\sigma_n(t))|\rmd t\\
	&+& \int_0^{\tau}|u_k(\sigma_n(t)) - u_k(t)|\rmd t + \int_0^{\tau}|u_k(t) - u(t)|\rmd t.%
\end{eqnarray*}
Let $\ep>0$. Choose $k$ large enough that $\int_{\R}|u(s) - u_k(s)|\rmd s < \ep/[3(1+\ep)]$. The first integral can be re-written as%
\begin{equation*}
  \int_{\sigma_n^{-1}(0)}^{\sigma_n^{-1}(\tau)} |u(s) - u_k(s)|\frac{\rmd}{\rmd s} \left[\sigma_n^{-1}(s)\right] \rmd s.%
\end{equation*}
Now choose $n_0$ large enough that $(\rmd/\rmd s)\sigma_n^{-1}(s) \leq 1 + \ep$ for all $n\geq n_0$ and $s$ in an interval of the form $[-\rho,\tau+\rho]$, $\rho>0$. 
Then let $n_1 \geq n_0$ with $\sigma_n^{-1}(0),\sigma_n^{-1}(\tau) \in [-\rho,\tau+\rho]$ for $n\geq n_1$. This implies%
\begin{equation*}
  \int_0^{\tau}|u(\sigma_n(t)) - u_k(\sigma_n(t))|\rmd t \leq (1+\ep)\int_{-\rho}^{\tau+\rho}|u(s) - u_k(s)|\rmd s < \frac{\ep}{3}.%
\end{equation*}
Finally, choose $n_2 \geq n_1$ large enough that $\int_0^{\tau}|u_k(\sigma_n(t)) - u_k(t)|\rmd t < \ep/3$ for $n\geq n_2$. This implies%
\begin{equation*}
  \int_0^{\tau}|u(\sigma_n(t)) - u(t)|\rmd t < \frac{\ep}{3} + \frac{\ep}{3} + \frac{\ep}{3} = \ep,%
\end{equation*}
completing the proof of the first statement. The second statement easily follows from the first one, if $z$ is continuous. For an arbitrary $z\in L^1(\R,\R^m)$, let $z_k$ be continuous with $\int_{\R}|z_k(t) - z(t)|\rmd t \rightarrow 0$. Then%
\begin{eqnarray*}
 && \int_0^{\tau} \left|u(\sigma_n(t)) - u(t)\right| |z(t)| \rmd t \leq 2\|u\|_{\infty}\int_0^{\tau}|z(t)-z_k(t)|\rmd t\\
	&& + \int_0^{\tau}\left|u(\sigma_n(t)) - u(t)\right| |z_k(t)| \rmd t.%
\end{eqnarray*}
Choose $k$ large enough that the first integral is $< \ep/[4\|u\|_{\infty}]$ and then choose $n$ large enough for the second one to become smaller than $\ep/2$.%
\end{proof}

The main approximation lemma needed to prove the desired formula for the entropy of $Q$ reads as follows.%

\begin{lemma}\label{lem_approxper}
Let $(u,x)\in\QC$ be a $\tau$-periodic point of the control flow for some $\tau>0$. Then there exist sequences $\tau_n\rightarrow\tau$ and $u_n\in\UC$ such that each $u_n$ is $\tau_n$-periodic, $(u_n,x(u_n)) \in \inner\UC \tm \inner Q$ and $(u_n,x(u_n)) \rightarrow (u,x)$.%
\end{lemma}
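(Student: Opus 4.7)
The plan is to construct $u_n$ as a two-step perturbation of $u$: first obtain a periodic approximation $w_n$ of $u$ whose base point lies in $\inner Q$, then shrink $w_n$ by a convex combination so as to enter $\inner\UC$ without losing the interior property of the base point.

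\textbf{Step 1 (periodic orbit with interior base point via shadowing).} Since $Q$ is the closure of a control set $D$ with nonempty interior (Proposition~\ref{prop_conj}) and LARC holds on $\inner D = \inner Q$, we have $Q = \cl\inner Q$; choose $y_n \in \inner Q$ with $y_n \to x$. Using LARC in a neighborhood of $x$, construct short ``correction'' controls $\alpha_n, \beta_n \in \UC$ of durations $\eta_n \to 0$ that route $y_n$ to $x$ and $x$ back to $y_n$, in such a way that the concatenation of $\alpha_n$, one period of $u$, and $\beta_n$ (extended periodically) gives a $\tau_n$-periodic $\delta_n$-chain for the shift flow $\theta$ with $\tau_n := \tau + 2\eta_n$, $\delta_n \to 0$, and the chain arbitrarily $d_{\UC}$-close to $u$. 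Apply Corollary~\ref{cor_shiftshadowing} to obtain a $\tau_n$-periodic control $w_n \in \UC$ that $\ep_n$-shadows this chain with $\ep_n \to 0$. Since the chain's base point is $y_n \in \inner Q$ and the shadowing places $(w_n, x(w_n))$ close to $(\mathrm{chain}, y_n)$ (via continuity of $v \mapsto x(v)$ from Proposition~\ref{prop_conj}), we conclude $x(w_n) \in \inner Q$ for large $n$ and $w_n \to u$ in $\UC$.

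\textbf{Step 2 (interior of $\UC$).} For each $n$, choose $\lambda_n \in (0, 1/n)$ small enough that $x\bigl((1-\lambda) w_n\bigr) \in \inner Q$ for every $\lambda \in (0, \lambda_n]$, which is possible by continuity of $v \mapsto x(v)$ and openness of $\inner Q$. Define $u_n := (1 - \lambda_n)\, w_n$. Since $U$ is convex with $0 \in \inner U$ and compact, the set $(1 - \lambda_n)\,U$ is a compact subset of $\inner U$, so Lemma~\ref{lem_intuchar} yields $u_n \in \inner\UC$. The control $u_n$ is $\tau_n$-periodic, and the $\phi$-invariance of $\QC$ combined with uniqueness of the lift in assumption~(b) forces $\varphi(\tau_n, x(u_n), u_n) = x(u_n)$, so $(u_n, x(u_n))$ is automatically $\tau_n$-periodic. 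Together with Step~1 and continuity of $x(\cdot)$, this gives $\tau_n \to \tau$ and $(u_n, x(u_n)) \to (u, x)$.

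\textbf{Main obstacle.} Step~1 is the delicate point. The chain construction requires LARC to produce short correction segments whose durations $\eta_n$ tend to zero while still connecting $y_n$ to $x$ and back; this is compatible with $y_n \to x$ only if one can arrange the reach time to scale with the distance $\varrho(y_n, x)$. Simultaneously the chain must (i) be $\delta_n$-close in $d_{\UC}$ to the orbit of $u$, (ii) take values in a fixed compact subset of $U$, and (iii) force the shadowing periodic orbit produced by Corollary~\ref{cor_shiftshadowing} to have its base point genuinely in the open set $\inner Q$ (not merely near it). Balancing the shadowing constant of Corollary~\ref{cor_shiftshadowing} against the local-controllability radii provided by LARC and the modulus of continuity of $v \mapsto x(v)$ is the technical crux.
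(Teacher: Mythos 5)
Your Step 2 (scaling by $(1-\lambda_n)$ to land in $\inner\UC$) is a clean trick, and the observation that $\tau_n$-periodicity of $(u_n,x(u_n))$ is automatic from assumption (b) is correct. But Step 1 has a genuine gap, and it is precisely at the point your ``Main obstacle'' paragraph gestures at without resolving: you never actually establish $x(w_n)\in\inner Q$.

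The difficulty is that Corollary \ref{cor_shiftshadowing} produces a periodic control $w_n$ in $\UC$ whose period is an \emph{integer} (the number of links of the $\delta$-chain under the time-one map $\theta_1$), whereas your $\tau_n=\tau+2\eta_n$ is a real number converging to $\tau$, which need not be an integer; the shadowing statement does not produce $\tau_n$-periodic orbits for arbitrary real $\tau_n$. More fundamentally, even granting a $\tau_n$-periodic shadowing orbit, the phrase ``the chain's base point is $y_n$'' has no meaning: the chain lives in $\UC$, not in $\UC\times M$, and the state $x(w_n)$ is determined by the map $v\mapsto x(v)$ of Proposition \ref{prop_conj}, not by any state $y_n$ you chose to route through. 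What continuity of $v\mapsto x(v)$ actually gives you is $x(w_n)\to x(u)=x$, and $x$ may well lie on $\partial Q$; since $y_n\to x$ too, ``$x(w_n)$ close to $y_n$'' provides no protection against $x(w_n)\in\partial Q$. Without an additional mechanism, there is no reason the shadowing orbit avoids the boundary.

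The paper closes exactly this gap with a different device: it passes to a time-reparametrized system $\Sigma^\gamma$ (so that $\tau_n=\sigma_{v_n}(\tau)\to\tau$ is handled by the reparametrization rather than by shadowing), and chooses the approximating controls $(v_n,\bar u_n)$ to be \emph{universally regular} via Coron's theorem (Corollary \ref{cor_regtrajex}). Universal regularity gives controllability of the linearization along the entire periodic trajectory of $u_n$, hence local controllability in a tube around it; if $x(u_n)$ were on $\partial Q=\partial D$ this would produce trajectories leaving and re-entering $D$, contradicting the no-return property of a control set. That is the step that forces $x(u_n)\in\inner Q$, and it has no analogue in your construction. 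If you want to salvage your approach, you would have to build universal regularity (or some equivalent openness of the reachable set along the periodic orbit) into the chain you shadow — but that is essentially the paper's route, reached by a longer path.
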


\begin{proof}
The proof is subdivided into five steps.%

\emph{Step 1.} For some $\gamma>1$ consider the time-transformed system%
\begin{equation*}
  \Sigma^{\gamma}:\quad \dot{x}(t) = v(t)\left[f_0(x(t)) + \sum_{i=1}^m u_i(t)f_i(x(t))\right],\quad (v,u) \in \VC^{\gamma} \tm \UC,%
\end{equation*}
with $\VC^{\gamma} := \{v\in L^{\infty}(\R,\R) : v(t) \in [1/\gamma,\gamma] \mbox{ a.e.}\}$. From the proof of Corollary \ref{cor_regforcasys} we know that the trajectories of $\Sigma^{\gamma}$ are time reparametrizations of those of $\Sigma$, more precisely%
\begin{equation*}
  \varphi(\sigma_v(t),x,u) \equiv \varphi^{\gamma}(t,x,(v,u \circ \sigma_v)) \mbox{\ with\ } \sigma_v(t) = \int_0^t v(s)\rmd s.%
\end{equation*}
It follows that $D$ is also a control set of $\Sigma^{\gamma}$. Moreover, we know that $\Sigma^{\gamma}$ satisfies the strong accessibility rank condition on $\inner Q$, since $\Sigma$ satisfies the classical accessibility rank condition on this set.%

\emph{Step 2.} By Corollary \ref{cor_regtrajex}, the set of universally regular control functions for $\Sigma^{\gamma}$ is dense in $\CC^{\infty}(\R,(1/\gamma,\gamma) \tm \inner U)$ with respect to the $\CC^{\infty}$-topology. Let $\mathbf{1}$ denote the constant function with value $1$. Then we find universally regular $(v_n,\bar{u}_n) \in \CC^{\infty}(\R,(1/\gamma,\gamma) \tm \inner U)$ with $v_n \rightarrow \mathbf{1}$ in the $\CC^{\infty}$-topology and $\bar{u}_n \rightarrow u$ in the weak$^*$-topology, since Lemma \ref{lem_diff} guarantees that $u$ can be weakly$^*$-approximated by functions in $\CC^{\infty}(\R,\inner U)$. Now we define the desired sequences as follows. Put%
\begin{equation*}
  \tau_n := \sigma_{v_n}(\tau),\quad u_n(t) := \bar{u}_n \circ \sigma_{v_n}^{-1}(t) \mbox{\ for all } t\in[0,\tau_n],%
\end{equation*}
and extend $u_n$ $\tau_n$-periodically. Since $\tau_n = \int_0^{\tau}v_n(s)\rmd s$ and $v_n \rightarrow \mathbf{1}$ in $\CC^{\infty}$, it follows immediately that $\tau_n \rightarrow \tau$. Since $u_n(\R) = u_n([0,\tau_n]) = \bar{u}_n([0,\tau]) =: K$ and $\bar{u}_n$ is continuous with values in $\inner U$, Lemma \ref{lem_intuchar} implies $u_n \in \inner\UC$.%

\emph{Step 3.} We show $x(u_n) \in \inner Q$. Note that $\varphi(\tau_n,x(u_n),u_n) = x(\theta_{\tau_n}u_n) = x(u_n)$, implying%
\begin{equation*}
  x(u_n) = \varphi(\sigma_{v_n}(\tau),x(u_n),u_n) = \varphi^{\gamma}(\tau,x(u_n),(v_n,\bar{u}_n)).%
\end{equation*}
Since $(v_n,\bar{u}_n)$ is universally regular for $\Sigma^{\gamma}$, the linearization of $\Sigma^{\gamma}$ along the controlled trajectory $(\varphi^{\gamma}(\cdot,x(u_n),(v_n,\bar{u}_n)),(v_n(\cdot),\bar{u}_n(\cdot)))$ is controllable, implying local controllability along this trajectory. Hence, $x(u_n)$ cannot be an element of the boundary $\partial Q = \partial D$, because this would lead to trajectories of $\Sigma^{\gamma}$ that leave $D$ and then return to $D$, in contradiction to the no-return property.%

\emph{Step 4.} We prove that for each $j\in\Z$ the sequence%
\begin{equation*}
  \beta_n(t) := \sigma_{v_n}^{-1}(t + j(\tau-\tau_n)),\quad \beta_n:\R\rightarrow\R,%
\end{equation*}
is a sequence of diffeomorphisms such that both $\beta_n$ and $\beta_n^{-1}$ converge locally uniformly to the identity, and the sequence of derivatives $(\rmd / \rmd t)\beta_n^{-1}$ converges locally uniformly to $\mathbf{1}$. First note that $\beta_n^{-1}(t) = \sigma_{v_n}(t) - j(\tau - \tau_n)$. By definition of $\sigma_{v_n}$ is is clear that $\sigma_{v_n}$ is smooth and invertible. Since the derivative is $v_n(t) \in [1/\gamma,\gamma]$, also the inverse $\sigma_{v_n}^{-1}$ is smooth. Then then same is true for $\beta_n^{-1}$. We have%
\begin{equation*}
  |\beta_n^{-1}(t) - t| \leq |\sigma_{v_n}(t) - t| + j|\tau - \tau_n| \leq \mathrm{sign}(t)\int_0^t|v_n(s) - 1|\rmd s + j|\tau - \tau_n|.%
\end{equation*}
Since $\tau_n\rightarrow\tau$ and $v_n \rightarrow \mathbf{1}$ in $\CC^{\infty}$, it follows that $\beta_n^{-1}$ converges locally uniformly to the identity. Since $(\rmd/\rmd t)\beta_n^{-1}(t) = v_n(t)$, it follows that local uniform convergence also holds for the derivative. The convergence $\beta_n \rightarrow \id$ holds, since $|\beta_n(t) - t| = |s - \beta_n^{-1}(s)|$ for $s = \beta_n^{-1}(t)$ and for every compact interval $[a,b]$, the set $\beta_n^{-1}([a,b])$ is contained in the compact interval $\gamma[a,b] - j(\tau-\tau_n)$.%

\emph{Step 5.} It remains to prove $u_n \rightarrow u$ (which by continuity implies $x(u_n) \rightarrow x(u) = x$). For a given $y\in L^1(\R,\R^m)$ we have to show%
\begin{equation*}
  \int_{\R} \langle u_n(t) - u(t),y(t) \rangle \rmd t \rightarrow 0.%
\end{equation*}
To this end, we first choose $k\in\N$ large enough that $\int_{\R\backslash[-k\tau,k\tau]}|y(t)|\rmd t$ becomes small. Since $\tau_n\rightarrow\tau$, for sufficiently large $n$ the intervals $I_j^n := [j\tau_n,(j+1)\tau_n]$, $j = -(k+1),\ldots,k$, cover $[-k\tau,k\tau]$. Let $I_n := \bigcup_{j=-(k+1)}^kI_j^n$. Then%
\begin{eqnarray*}
 && \left|\int_{I_n} \langle u_n(t) - u(t),y(t) \rangle \rmd t\right| \leq \sum_{j=-(k+1)}^k\left|\int_{I_j^n} \langle u_n(t) - u(t),y(t) \rangle \rmd t\right|\\
	&& \leq (2k+2) \max_j \left|\int_{I_j^n} \langle \bar{u}_n(\sigma_{v_n}^{-1}(t - j\tau_n)) - u(t),y(t) \rangle \rmd t\right|.%
\end{eqnarray*}
In the integral we substitute $t = s + j\tau$. Writing $z_j(s) := y(s+j\tau)$, $\beta_n(s) := \sigma_{v_n}^{-1}(s + j(\tau-\tau_n))$, and using $\tau$-periodicity of $u$, this gives%
\begin{equation*}
  \int_{j(\tau_n-\tau)}^{j(\tau_n-\tau)+\tau_n} \langle \bar{u}_n(\beta_n(s)) - u(s),z_j(s) \rangle \rmd s.%
\end{equation*}
In the bounds between $0$ and $j(\tau_n - \tau)$ and between $\tau$ and $j(\tau_n-\tau)+\tau_n$, the corresponding integral becomes as small as we want for large $n$. Hence, it remains to estimate%
\begin{eqnarray*}
  \left|\int_0^{\tau} \langle \bar{u}_n(\beta_n(s)) - u(s),z_j(s) \rangle \rmd s\right| &\leq& \underbrace{\left|\int_0^{\tau} \langle \bar{u}_n(\beta_n(s)) - u(\beta_n(s)),z_j(s) \rangle \rmd s\right|}_{=: S_1}\\
	&& + \underbrace{\left|\int_0^{\tau} \langle u(\beta_n(s)) - u(s),z_j(s) \rangle \rmd s\right|}_{=: S_2}.%
\end{eqnarray*}
The integral $S_1$ can be estimated as follows.%
\begin{eqnarray*}
  S_1 &\leq& \left|\int_0^{\tau} \langle \bar{u}_n(\beta_n(s)) - u(\beta_n(s)),z_j(s) - z_j(\beta_n(s))\rangle \rmd s\right|\\
	&& + \left|\int_0^{\tau} \langle \bar{u}_n(\beta_n(s)) - u(\beta_n(s)), z_j(\beta_n(s)) \rangle \rmd s\right|\\
	&\leq& \diam U \int_0^{\tau}|z_j(s) - z_j(\beta_n(s))|\rmd s\\
	&& + \left|\int_{\beta_n^{-1}(0)}^{\beta_n^{-1}(\tau)} \langle \bar{u}_n(t) - u(t),z_j(t) \rangle v_n(t) \rmd t\right|.%
\end{eqnarray*}
From Step 4 and Lemma \ref{lem_approx} it follows that the first integral becomes small. For the second one it follows from weak$^*$-convergence $\bar{u}_n \rightarrow u$ and boundedness of $v_n$. To see that $S_2$ becomes small, use Cauchy-Schwarz and Lemma \ref{lem_approx}.%
\end{proof}

Finally, we can prove the the main theorem of this section.%

\begin{theorem}\label{thm_ie_hyperbolic_formula}
Under the assumptions (a) and (b), for every compact set $K\subset D$ of positive volume the invariance entropy satisfies%
\begin{equation*}
  h_{\inv}(K,Q) = \inf_{(u,x)\in\QC}\limsup_{\tau\rightarrow\infty}\frac{1}{\tau}\log\left|\det(\rmd\varphi_{\tau,u})|_{E^+_{u,x}}\right|.%
\end{equation*}
\end{theorem}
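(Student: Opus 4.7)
The inequality $\geq$ is immediate from Theorem \ref{thm_ie_hyperbolic_lb}: hypothesis (b) is precisely the uniqueness condition $\UC_Q=\UC$, and the dimensions of $E^{\pm}$ are constant on $\QC$ because $\QC$ is connected (as the maximal invariant chain transitive set of $\phi$). So only the opposite inequality requires a new argument; write $R$ for the right-hand side of the formula.

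My plan for the upper bound is to first establish $h_{\inv}(K,Q)\leq\lim_{t\to\infty}\frac{1}{t}\log|\det(\rmd\varphi_{t,u})|_{E^+_{u,x}}|$ for every $\phi$-periodic $(u,x)\in\QC$, and then to reduce $R$ to the infimum of these quantities. Since $K\subset D\subset Q$, any $(\tau,K,D)$-spanning set is $(\tau,K,Q)$-spanning, giving $h_{\inv}(K,Q)\leq h_{\inv}(K,D)$; and assumption (a) allows Corollary \ref{cor_regforcasys} to be applied to $D$, so that
\begin{equation*}
  h_{\inv}(K,Q)\leq\inf_{(u_0,x_0)}\limsup_{t\to\infty}\tfrac{1}{t}\log^+\bigl\|(\rmd\varphi_{t,u_0})_{x_0}^{\wedge}\bigr\|,
\end{equation*}
the infimum running over $(u_0,x_0)\in\inner\UC\tm\inner D$ whose trajectory stays in a compact subset of $\inner D$. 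For a fixed $\tau$-periodic $(u,x)\in\QC$, Lemma \ref{lem_approxper} furnishes $\tau_n$-periodic $u_n\in\inner\UC$ with $x(u_n)\in\inner Q=\inner D$, $\tau_n\to\tau$, and $(u_n,x(u_n))\to(u,x)$; each periodic orbit is compact and contained in $\inner D$, so applying the preceding display at $(u_n,x(u_n))$ and invoking Proposition \ref{prop_perposlyapsum} yields $h_{\inv}(K,Q)\leq\sum_i d_{\lambda_i^{(n)}}(\lambda_i^{(n)})^+$, the sum taken over the Lyapunov exponents at $(u_n,x(u_n))$ with multiplicity. Joint continuity of the linearized cocycle together with $\tau_n\to\tau$ forces $(\rmd\varphi_{\tau_n,u_n})_{x(u_n)}\to(\rmd\varphi_{\tau,u})_x$; hyperbolicity at $(u,x)$ keeps the limit spectrum off the unit circle, so eigenvalues and multiplicities pass cleanly to the limit and the positive-part sum converges to $\lim_{t\to\infty}\frac{1}{t}\log|\det(\rmd\varphi_{t,u})|_{E^+_{u,x}}|$, using that the positive Lyapunov directions at a hyperbolic periodic point are exactly $E^+_{u,x}$.

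To conclude, I identify $R$ with the periodic infimum just bounded. The map $a_t(u,x):=\log|\det(\rmd\varphi_{t,u})|_{E^+_{u,x}}|$ is a continuous additive cocycle over $\phi|_{\QC}$, which via the conjugacy of Proposition \ref{prop_conj} (available because (b) gives $\UC_Q=\UC$) transfers to a continuous additive cocycle $\tilde a$ on $\UC$ over the shift. For every $u\in\UC$, $\limsup_t\frac{1}{t}\tilde a_t(u)$ lies in $\Lambda_{\Mo}(\tilde a)$ as the limit of Morse exponents of trivial one-step chains with growing time, so $\inf_u\limsup_t\frac{1}{t}\tilde a_t(u)\geq\min\Lambda_{\Mo}(\tilde a)$; meanwhile \cite[Thm.~3.2(6)]{SSe} and Proposition \ref{prop_perlyap} give $\min\Lambda_{\Mo}(\tilde a)=\min\Lambda_{\Ly}(\tilde a)=\inf_{u\text{ periodic}}\lim_t\frac{1}{t}\tilde a_t(u)\geq\inf_u\limsup_t\frac{1}{t}\tilde a_t(u)$, so all four quantities coincide. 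Undoing the conjugacy identifies $R$ with the infimum of $\lim_t\frac{1}{t}a_t(u,x)$ over $\phi$-periodic points in $\QC$, and the previous paragraph gives $h_{\inv}(K,Q)\leq R$. The main delicate step is the spectral-continuity argument in the middle paragraph: hyperbolicity at the limit provides a uniform spectral gap off the unit circle, which is what makes the algebraic multiplicities of the positive eigenvalues of $(\rmd\varphi_{\tau_n,u_n})_{x(u_n)}$ stabilize as $n\to\infty$ and justifies convergence of the positive-part sum to the unstable-determinant growth rate.
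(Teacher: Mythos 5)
Your argument is correct and follows essentially the same route as the paper: lower bound from Theorem \ref{thm_ie_hyperbolic_lb}, upper bound via the periodic estimate of Proposition \ref{prop_perest} (through Corollary \ref{cor_regforcasys}), approximation of an arbitrary periodic point of $\QC$ by periodic points in $\inner\UC\tm\inner Q$ via Lemma \ref{lem_approxper}, and identification of the periodic infimum with the right-hand side through the conjugacy of Proposition \ref{prop_conj}, Proposition \ref{prop_perlyap}, and the Morse/Lyapunov spectrum facts. The one place you diverge slightly is the passage from $(u_n,x(u_n))$ to $(u,x)$: you argue via eigenvalue stability under the hyperbolic spectral gap, whereas the paper short-circuits this by observing that $(u_n,x(u_n))\in\QC$ and invoking the continuity of $(t,u,x)\mapsto(\rmd\varphi_{t,u})_x$ together with the continuity of the splitting $(u,x)\mapsto E^+_{u,x}$ on $\QC$ (property (H3)), which yields the convergence of $\frac{1}{\tau_n}\log|\det(\rmd\varphi_{\tau_n,u_n})|_{E^+_{u_n,x(u_n)}}|$ directly.
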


\begin{proof}
The lower estimate follows from Theorem \ref{thm_ie_hyperbolic_lb}. Concerning the upper estimate, Proposition \ref{prop_perest} yields%
\begin{equation}\label{eq_perinnest}
  h_{\inv}(K,Q) \leq \inf_{(u,x)}\lim_{t\rightarrow\infty}\frac{1}{t}\log\left|\det(\rmd\varphi_{t,u})|_{E^+_{u,x}}\right|,%
\end{equation}
where the infimum is taken over all periodic $(u,x) \in \inner \UC \tm \inner Q$. Now take an arbitrary periodic $(u,x) \in \QC$ and let $\tau>0$ be its period. By Lemma \ref{lem_approxper}, there exist sequences $u_n \in \inner\UC$ and $\tau_n \rightarrow \tau$ such that each $u_n$ is $\tau_n$-periodic, $x(u_n) \in \inner Q$ and $(u_n,x(u_n)) \rightarrow (u,x)$. Then we have%
\begin{equation*}
  \frac{1}{\tau_n}\log\left|\det(\rmd\varphi_{\tau_n,u_n})|_{E^+_{u_n,x(u_n)}}\right| \rightarrow \frac{1}{\tau}\log\left|\det(\rmd\varphi_{\tau,u})|_{E^+_{u,x(u)}}\right|,%
\end{equation*}
because both $(t,u,x) \mapsto (\rmd\varphi_{t,u})_x$ and $(u,x) \mapsto E^+_{u,x}$ are continuous. This implies that the estimate \eqref{eq_perinnest} also holds, when the infimum is taken over all periodic $(u,x)\in\QC$. Note that $\alpha_t(u,x) := \log|\det(\rmd\varphi_{t,u})|_{E^+_{u,x}}|$ is a continuous additive cocycle over the control flow on $\QC$. Using the topological conjugacy between the control flow on $\QC$ and the shift flow on $\UC$, Proposition \ref{prop_perlyap} implies that $h_{\inv}(K,Q)$ is bounded by the infimum of the full Lyapunov spectrum of $\alpha$, concluding the proof of the upper estimate. To see that the upper limits $\limsup(1/t)\alpha_t(u,x)$ are not smaller than the infimum of the Lyapunov spectrum, see \cite[Cor.~2]{KSt}.%
\end{proof}

\begin{remark}
Finally, we note that the assumption that for each $u$ there exists a unique $x(u)$ with $(u,x(u))\in\QC$ is satisfied in the following two cases: (i) small control sets that arise around hyperbolic equilibria, and (ii) hyperbolic chain control sets of right-invariant systems on flag manifolds (see \cite{DSK}). These and more examples will be discussed in another paper.%
\end{remark}

\section{Acknowledgements}%

We thank Katrin Gelfert and Maxence Novel for very helpful comments, leading to the improvement of the general upper bound in Section \ref{sec_gub} in the first case, and to the proof of Theorem \ref{thm_ie_hyperbolic_lb} in the second case. Moreover, we warmly thank Anne Gr\"{u}nzig for proof-reading the manuscript. The first author was supported by FAPESP scholarship 2013/19756-8 and partially by CAPES grant no.~4229/10-0 and CNPq grant no.~142082/2013-9. The second author was supported by DFG fellowship KA 3893/1-1 and a grant of the Max-Planck-Institut in Bonn, where part of this work was done.%

\end{document}